\def\ltextindent#1{\hbox to \hangindent{#1\hss}\ignorespaces}
\long\def\ignore#1\recognize{}
\let\svthefootnote\thefootnote
\def\med{\medskip}
\def\hs{\hskip}
\def\cl{\centerline}
\def\ds{\displaystyle}
\def\ol{\overline}
\def\ul{\underline}
\def\sm{\setminus}
\def\{{\lbrace}
\def\}{\rbrace}
\def\map{\rightarrow}
\def\inv{^{-1}}
\def\abs#1{\vert#1\vert}
\def\wt{\widetilde}
\def\wh{\widehat}
\def\phi{\varphi}
\def\qed{\hfill$\square$\med}
\def\N{\mathbb{N}}
\def\R{\mathbb{R}}
\def\C{\mathbb{C}}
\def\Z{\mathbb{Z}}
\def\Q{\mathbb{Q}}
\def\kk{{\Bbbk}}
\def\K{{\Bbbk}}
\def\z{z}
\def\rr{\rho}
\def\SS_0{\sigma}
\def\zz{z}
\def\tt{s}
\def\tt{s}
\def\9{\6_\tt}
\def\DD{\partial}
\def\LL{L}
\def\L{L}
\def\TT{T}
\def\SS{S}
\def\FF{I\!\!F}
\def\FF{{\mathcal F}}
\def\HH{{\mathcal H}}
\def\O{{\cal O}}
\def\RR{{\cal R}}
\def\KK{{\cal K}}
\def\1{1\!\!1}
\def\kk{\Bbbk}
\newcommand{\6}{\partial}
\def\min{{\rm min}}
\def\max{{\rm max}}
\def\ord{{\rm ord}}
\def\Id{{\rm Id}}
\def\Ker{{\rm Ker}}
\def\Im{{\rm Im}}
\def\a{\alpha}
\renewcommand{\xi}{n_\rho}
\def\O{{\cal O}}
\def\RR{{\cal R}}
\def\F{{\mathbb{F}}}
\def\Quot{{\rm Quot}}
\def \osb{[\![}
\def \csb{]\!]}
\def \orb{(\!(}
\def \crb{)\!)}
\def \LL{{L}}
\theoremstyle{definition}
\newtheorem{defi}{Definition}[section]
\newtheorem{ex}[defi]{Example}
\theoremstyle{plain}
\newtheorem{thm}[defi]{Theorem}
\newtheorem{lem}[defi]{Lemma}
\newtheorem{cor}[defi]{Corollary}
\newtheorem{prop}[defi]{Proposition}
\newtheorem{conj}[defi]{Conjecture}
\newtheorem{problem}[defi]{Problem}
\theoremstyle{remark}
\newtheorem{rem}[defi]{Remark}
\begin{document}
\title{Fuchs' theorem on linear differential equations in arbitrary characteristic}
\author{Florian F\"urnsinn, Herwig Hauser}
\maketitle

\begin{abstract}

The paper generalizes Lazarus Fuchs' theorem on the solutions of complex ordinary linear differential equations with regular singularities to the case of ground fields of arbitrary characteristic, giving a precise description of the shape of each solution. This completes partial investigations started by Taira Honda and Bernard Dwork.

The main features are the introduction of a differential ring $\RR$ in infinitely many variables mimicking the role of the (complex) iterated logarithms, and the proof that adding these ``logarithms'' already provides sufficiently many primitives so as to solve {\em any} differential equation with regular singularity in $\RR$. A key step in the proof is the reduction of the involved differential operator to an Euler operator, its {\em normal form}, to solve Euler equations in $\RR$ and to lift their
(monomial) solutions to solutions of the original equation.

The first (and already very striking) example of this outset is the exponential function $\exp_p$ in positive characteristic, solution of $y' = y$. We prove that it necessarily involves all variables and we construct its explicit (and quite mysterious) power series expansion. Additionally, relations of our results to the Grothendieck-Katz $p$-curvature conjecture and related conjectures will be discussed.
\let\thefootnote\relax\footnote{MSC2020: 12H20, 14G17, 34A05, 34M03, 47E05. Supported by the Austrian Science Fund FWF, projects P-31338 and P-34765. We are indebted to A.~Bostan, M.~Singer, F.~Beukers, G.~Teschl, F.J.~Castro-Jim\'enez, L.~Narv\'aez, M.~Wibmer, C.~Chiu, C.~Schindler, N.~Merkl, M.~Reibnegger, F.~Lang, S.~Schneider
and S.~Yurkevich for very valuable input. The lively feedback of the participants of an online lecture series given by the second author in spring 2021 helped very much to shape the contents and the exposition of the present note.}
\addtocounter{footnote}{-1}\let\thefootnote\svthefootnote
\end{abstract}
\section{Introduction}
%Let $L=p_n\6^n+p_{n-1}\6^{n-1}+\ldots + p_1\6+p_0\in \O[\6]$ be a linear univariate differential operator with holomorphic or formal power series coefficients $p_i$ in $\O=\C\{x\}$, respectively, $\O=\K\osb x\csb$, $\K$ an arbitrary field. Write $L=\sum_{j=0}^n\sum_{i=0}^\infty c_{ij}x^i\6^j$ for its expansion at $0$, and denote by $L_0$ the {\it initial form} of $L$ at $0$, i.e., the Euler operator
%\[L_0=\sum_{i-j=\tau}^\infty c_{ij}x^i\6^j,\]
%where $\tau$ is the minimal {\it shift} $i-j$ occurring in the expansion. The {\it indicial polynomial} $\chi=\chi_L$ of $L$ at $0$ is defined as the polynomial $\chi(s)=\sum_{i-j=\tau} c_{ij}s^{\ul j}$ with $s^{\ul j}=s(s-1)\cdots (s-j+1)$, and its roots in $\C$, respectively in an algebraic closure $\ol \K$ of $\K$, are the {\it local exponents} of $L$ at $0$. We denote the multiplicity of $\rho$ as a zero of $\chi$ by $m_\rho$. Clearly, $L_0(x^k)=\chi(k) x^{k+\tau}$. If $L_0$ has the same order as $L$ we say that $L$ has a \textit{regular singularity} at $0$.

When solving the ordinary linear differential equation 
\begin{equation} \label{eq:deq}
 Ly=p_ny^{(n)}+p_{n-1}y^{(n-1)}+\ldots +p_1y'+p_0y=0
\end{equation}
with holomorphic coefficients and regular singularity at $0$, Fuchs' Theorem exhibits the following five phenomena:
\begin{enumerate}[(i)]
\item \textit{Euler equations} of the form
\begin{equation} \label{eq:euldeq}
c_nx^ny^{(n)}+c_{n-1}x^{n-1}y^{(n-1)}+\ldots +c_1y'+c_0y=0,
\end{equation}
for $c_i\in \C$, require monomial solutions $x^\rho\coloneqq\exp(\rho \log x)$ where $\rho\in \C$ are the local exponents of the equation at the singular point $x=0$. A full basis of solutions is given by $x^\rho \log(x)^i$, where $0\leq i<m_\rho$ and $m_\rho$ denotes the multiplicity of $\rho$ as a root of the indicial polynomial.
\item For each local exponent $\rho$, maximal among the local exponents congruent to each other modulo $\Z$, one can solve the linear recursion given by the equation iteratively. One obtains a  solution $y$ of \eqref{eq:deq} lifting the solution $x^\rho$ from $L_0y=0$, where $L_0$ denotes the initial form of $L$
\[y=x^\rho \sum_{i=0}^\infty a_ix^i \]
with $a_i\in\C$ and $a_0=1$.
\item If $\rho$ in addition is a multiple root, all solutions $x^\rho \log(x)^i$ of the initial equation can be lifted to solutions of \eqref{eq:deq}. These solutions are then of the form
\[ y_k=\sum_{i=0}^k h_{ki}(x)\log(x)^i,\]
for $k=0,\ldots, m_\rho-1$, with power series $h_{ki}$.
\item The next phenomenon is ``resonance''. That is the case when some local exponents differ by an integer. Let $\Omega=\{\rho_1,\ldots, \rho_s\}$ be such a set of exponents, ordered by $\rho_1>\rho_2>\ldots>\rho_s$ via $\rho_i-\rho_{i+1}\in \N$, and let $m_i$ be the multiplicity of $\rho_i$. Here, the lifting of the monomial solutions  $x^\rho\log(x)^i$ is more challenging, as higher orders of the logarithm appear. The solutions are of the form
\[ y_{jk}=x^{\rho_j}\sum_{i=0}^{\mathclap{m_{1}+\ldots +m_{j-1}+k}} h_{ijk}(x)\log(x)^i,\]
for $j=1,\ldots, s$ and $k=0,\ldots, m_j-1$.
\item Finally, the convergence of the coefficients $h_{ijk}$ is achieved via norm estimates on their coefficients. An extra issue is the possible algebraicity of the solutions (not involving logarithms). This classical problem, solved algorithmically by Singer in~\cite{Singer79}, is prominently presented by the still unsolved Grothendieck-Katz $p$-curvature conjecture. 

%\item For non-maximal local exponents $\rho$ let $\rho_1,\ldots, \rho_k$ be the other local exponents of \eqref{eq:deq} with $\rho_i-\rho\in \N$. Then it is possible to lift the solutions $x^\rho \log(x)^i$ of the initial equation to solutions of \eqref{eq:deq} and one obtains solutions in which the logarithm may appear with powers up to $m_\rho+m_{\rho_1}+\ldots +m_{\rho_k}-1$, i.e., solutions of the form
%\[ y=x^\rho\sum_{j=0}^{\mathclap{m_{\rho_1}+\ldots +m_{\rho_k}+i}} h_{ij}(x)\log(x)^j,\]
%for $i=0,\ldots, m_\rho-1$. Doing this for all local exponents at once, one obtains a full basis of $n=\ord L$ $\C$-linearly independent solutions. 
%\item Finally, a majorisation argument shows that all the coefficient series $h_{ij}$ are convergent, i.e., define a holomorphic function in a neighbourhood of $0$. The question of algebraicity for these solutions (especially the ones without logarithms) is a famous and notorious problem, culminating in the Grothendieck $p$-curvature conjecture.
\end{enumerate}
The collection of these facts is known as Fuchs' Theorem \cite{Fuchs66}, with a simplified proof given soonafter by Frobenius~\cite{Frobenius73}.
 
%\begin{ex}
%\begin{enumerate}[(a)]
%\item Consider the Euler operator \[L=x^3\partial^3-4x^2\partial^2+9x\partial-9.\]
%Its indicial polynomial reads  $\chi(s)=(s-3)^2(s-1)$ and thus its local exponents are $3$ with multiplicity $2$ and $1$ with multiplicity $1$. A basis of solutions of $Ly=0$ therefore is $x, x^3, x^3\log x$. 
%\item The equation $y'=y$ is satisfied by the exponential function. Indeed, making an ansatz $y=\sum_{k=0}^\infty a_kx^k$ one obtains the recursion $k a_k=a_{k-1}$, which is satisfied by the sequence $a_k=1/k!$ and gives the series expansion of the exponential function.
%\end{enumerate}
%\end{ex}

In a field $\kk$ of characteristic $p>0$ the situation is considerably more delicate: Instead of holomorphic coefficients we now allow polynomial or formal power series coefficients. 
%The local exponents will lie in the algebraic closure $\overline{\kk}$ of the the ground field, such that we need to consider ``monomials'' $t^\rho$. The new letter $t$ is introduced to separate elements of $\rho\in \overline{\F_p}$ from $k\in \Z$ in the exponents.  The elements $t^\rho$ are thus defined as symbols subject to the differentiation rule
%\[\partial t^\rho=\frac \rho x t^{\rho}.  \] Their algebraic relations are
%\[t^\rho\cdot t^\sigma = t^{\rho+\sigma}.\]
%Note that $\log x$ is no longer defined as a function. 
Honda \cite{Honda81} introduced a new variable $z$ with $\partial z = 1/x$ as a formal logarithm and studied when this extension suffices to describe solutions in $\kk(z)\orb x\crb$. Dwork generalized the ideas of Honda and treated the case of differential equations with nilpotent $p$-curvature in \cite{Dwork90}. Introducing finitely many new variables $z_1,\ldots, z_k$ instead of one variable $z$ with the differentiation rule $z_i'=z_{i-1}'/z_{i-1}$ for $i\geq 2$ and $z_1'=1/x$, he showed that \eqref{eq:deq} has nilpotent $p$-curvature if and only if it admits a basis of solutions in $\kk(x, z_1,\ldots, z_k)$ over the field of constants $\kk(x^p, z_1^p,\ldots, z_k^p)$. Note that the nilpotence of the $p$-curvature implies that \eqref{eq:deq} has a regular singularity at $0$ and that all local exponents lie in the prime field~\cite[Thm.~1, Prop.~2.1]{Honda81}.\\

The present paper continues this program and gives a full and detailed version of Fuchs' Theorem in the case of characteristic  $p>0$ and for all equations with regular singularity. We describe explicitly full bases of solutions of \eqref{eq:deq}. While in characteristic $0$ the exponents $k\in\Z$ of genuine power series lie in the algebraic closure $\overline \kk$ of the ground field $\kk$, this is no longer the case in positive characteristic. Therefore, monomials of the form $x^{\rho+k}$ with $\rho\in\overline\kk$ and $k\in\Z$ do not make sense without further specification. We dissolve this drawback by introducing a new variable $t$ and symbols (say, monomials)

$$t^\rho x^k$$

for $\rho\in \overline{\kk}, k\in\Z$, with multiplication $t^\rho x^k\cdot t^\sigma x^\ell=t^{\rho +\sigma} x^{k+\ell}$ and differentiation rule

$$\6(t^\rho x^k)= (\rho + \overline{k})t^\rho x^{k-1}.$$

In particular, $\6(t^\rho)=\frac{\rho}{x}t^\rho$ (this choice is preferable to the other option $\6(t^\rho)=\rho t^{\rho-1}$). Here, $\overline k$ denotes the residue of $k$ in the prime field $\F_p$ of $\overline \kk$.

The appropriate differential ring to work with is 
$$\mathcal{R}= \bigoplus_{\rho \in \overline{\kk}}t^\rho \kk(z_1, z_2, \ldots )\orb x\crb$$
with the derivation rules described above. Its ring of constants is 
$$\mathcal{C}:=\bigoplus_ {\rho \in \F_p}t^\rho x^{p-\rho}\kk(\z^p)\orb x^p\crb,$$
where $\F_p\subseteq \kk$ denotes the prime field. The five instances from above reappear again and translate as follows:
\begin{enumerate}[(i)]
\item To solve Euler equations \eqref{eq:euldeq} we define for $i\in \N$ the tuple
\[i^*=\left(i, \left \lfloor \frac i p \right \rfloor, \left \lfloor \frac i {p^2} \right \rfloor,\ldots\right) \in \N^{(\N)}, \]
in  which only finitely many entries are non-zero. Then the monomials $t^\rho z^{i^*}$, for $\rho$ a local exponent of multiplicity $m_\rho$, and $0\leq i<m_\rho$,  form a $\mathcal{C}$-basis of solutions of \eqref{eq:euldeq}.
\item The condition on $\rho$ to be maximal among the local exponents congruent to each other modulo $\Z$ is void. Further, it is no longer possible to construct the coefficients of a solution iteratively by the recursion determined by the differential equation in characteristic $p$:
%: If $\rho \in \overline \kk$ is a root of the indicial polynomial $\chi$ of \eqref{eq:euldeq} with monomial solution $t^\rho$ of the initial equation $L_0y=0$ then $t^\rho x^p$ is a solution as well. 
When making an unknown ansatz to find a power series solution of \eqref{eq:deq} in $t^\rho \overline{\kk}\osb x\csb$, one can in general solve the recursion up to order $p-1$ before arriving at a definite stop, due to an unsolvable equation for the next coefficient. We will show that the obstruction can be overcome when solving the equation in the extension $t^\rho\overline \kk(z_1, z_2,\ldots) \orb x\crb$. 
\item[(iii, iv)] Again it is impossible to distinguish between maximal and non-maximal local exponents. But the same procedure as used to construct a solution in (ii) allows us to ``lift'' the solutions $t^\rho z^{i^*}$ of $L_0y=0$ to solutions of \eqref{eq:deq} in $t^\rho\overline \kk(z_1, z_2,\ldots) \orb x\crb$.
\item[(v)] Convergence of the solutions is no longer an issue in characteristic $p$. 
However, one may ask whether there exist solutions which are {\em algebraic} (or even polynomials). This has been proven by Honda and Dwork for solutions involving only finitely many $z_i$ variables. But for infinitely many $z_i$, the question is more intriguing, and the exponential function is the first candidate to explore. This issue will be investigated in a later section of the paper through the projections of the solutions obtained by setting all but finitely many $z_i$'s equal to zero.
\end{enumerate}

\begin{ex}
\begin{enumerate}[(a)]
\item Consider again the Euler operator \[L=x^3\partial^3-4x^2\partial^2+9x\partial-9.\]
In all characteristics $p>2$ the equation $Ly=0$ has the basis of solutions solutions $t^1$, $t^3$, $t^3z_1$. In characteristic $2$ a basis of solutions is $t^1, t^1z_1, t^1z_1^2z_2$.
\item The equation $y'=y$ is one of the simplest differential equations with non-nilpotent $p$-curvature. In characteristics $2,3$ and  $5$ one finds the solutions 
\begin{align*}
\exp_2&=1+x+x^2z_1+x^3(z_1+1)+x^4(z_1^2z_2+z_1)+x^5z_1^2z_2+x^6(z_1^3z_2+z_1^3)\\ &\qquad+x^7(z_1^3z_2+z_1^2z_2+z_1^3+z_1+1)+\ldots,\\
\exp_3&=1+x+2x^2+2x^3z_1+x^4(1+2z_1)+x^5z_1+2x^6z_1^2+x^7(1+2z_1+2z_1^2)\\ &\qquad+x^8(2+z_1^2)+x^9(2z_1+z_1^3z_2)+\ldots,\\
\exp_5&=1+x+3x^2+x^3+4x^4+4x^5z_1+x^6(4z_1+1)+x^7(2z_1+2)\\ &\qquad +x^8(4z_1+1)+x^9z_1+3x^{10}z_1^2+\ldots
\end{align*}
\end{enumerate}
These solutions are chosen in such a way that no monomial which is a $p$-th power (except for $1$) appears in the series representation. In future work of the authors with Kawanoue it will be proven that the projections $\exp_p\vert_{z_i=0}$ are \emph{algebraic} over $\F_p(x, z_1,\ldots, z_{i-1})$ for all $i$.
\end{ex}

The program sketched in items (i) to (v) above is realized via a normal form theorem for linear differential operators. Reducing the operator in a controlled and explicit way to an Euler operator establishes the possibility to solve the associated equation by going backwards, lifting the solutions of the Euler equation to solutions of the original equation. Let us briefly explain this approach.

We start by transcribing Fuchs' Theorem in characteristic zero to the following (stronger) statement: Let $L$ be a differential operator with initial form $L_0$. On a suitable function space $\mathcal{F}$ on which $L$ acts, and which consists of (holomorphic) power series and powers of the complex logarithm up to a certain power determined by the local exponents, the operator $L$ can be transformed into its initial form $L_0$ under a linear automorphism of $\mathcal{F}$. In other words, the initial operator is a normal form of the differential operator up to the action of linear automorphisms on $\mathcal{F}$. The automorphism $u$ of $\FF$ will be explicitly constructed. In this way it provides an algorithm to compute the series expansion of a basis of solutions of the equation $Ly=0$ by pulling back via $u^{-1}$ the (obvious) solutions of $L_0y=0$.

This statement translates to positive characteristic: For any differential operator over a field $\kk$ of characteristic $p>0$ we define a subspace $\mathcal{F}\subseteq \mathcal{R}$ of the differential ring $\mathcal{R}$ described above and again, we can explicitly construct an automorphism of $\mathcal{F}$, which transforms $L$ into $L_0$. Also in the case of positive characteristic this will provide an algorithm to compute a basis of solutions of $Ly=0$.

{\bf Structure of the paper.} Section \ref{subsec:constr0} starts with a review of univariate differential operators, the definition of their initial form, the indicial polynomial and the local exponents. We describe the solutions of Euler equations over the complex numbers and introduce the differentiation of a differential operator with respect to the exponents (in the sense of Frobenius). Then, in Section \ref{subsec:nft0}, we construct a function space $\FF$ for equations in zero characteristic and then prove the respective normal form theorem, Theorem \ref{thm:nft0}. This provides in Section \ref{subsec:sol0} the description of a full basis of solutions in case the origin is a regular singular point, see Theorem \ref{thm:sol0}.
% For irregular singularities, we sketch in Section \ref{subsec:app0} Merkl's algorithm of how to use the normal form theorem also in this case to obtain all solutions. The section also includes a brief discussion of the occurrence of apparent singularities and of Gevrey series in this context. 

Chapters \ref{sec:deqp} and \ref{sec:appp} are devoted to positive characteristic. We start with the construction of primitives, the enhanced enlargement of function spaces, and the respective ring of constants (Sections \ref{subsec:lackp} and \ref{subsec:closurep}). These techniques are applied in section \ref{subsec:eulerp} for solving Euler equations in characteristic $p$. Section
\ref{subsec:nftp} contains the normal form theorem in positive characteristic, Theorem \ref{thm:nft}, together with its proof. This is then applied in section \ref{subsec:solp} to construct the associated solutions of differential equations with regular singularity (now defined through the order condition on the coefficients as given by Fuchs' criterion in characteristic $0$), in Theorem \ref{thm:solp}. 

With section \ref{subsec:exp} we begin to look at concrete examples as are the exponential function and the logarithm in characteristic $p$. In Section \ref{subsec:specialp} we study the case when only finitely many variables $z_i$ are needed to solve the equations, and relate this to the nilpotence of the $p$-curvature as described by Dwork. Also we ask and answer the question when the differential equation has even polynomial solutions, thus generalizing a result of Honda.

Section \ref{subsec:pcurv} compares the two normal form theorems with Grothendieck's $p$-curvature conjecture as well as with B\'ezivin's conjecture. The delicacy lies in the fact that the algorithm provided by the normal form theorem in positive characteristic is not the reduction modulo $p$ of the characteristic $0$ algorithm. The difference is subtle, and we aim at highlighting the involved phenomena (some of them being of purely number theoretic flavor). The article concludes in Section \ref{subsec:outlook} with the discussion of the integrality of the solutions, i.e., the question when the solutions of differential equations defined over $\Z$ have integer coefficients.

%---------------------------------

\section{Differential equations in characteristic zero} \label{sec:deq0}

\subsection{Constructions with differential operators} \label{subsec:constr0}

{\bf Singular differential equations.} 
Let be given a linear ordinary differential equation 
\[Ly=p_n(x) y^{(n)}+p_{n-1}(x)y^{(n-1)}+\ldots + p_1(x)y'+p_0(x)y=0,\]
where 
\[L=p_n\6^n+p_{n-1}\6^{n-1}+\ldots + p_1\6+p_0\in\O[\6]\]
is a differential operator. Here $\mathcal{O}$ denotes denotes the ring of  germs of holomorphic functions in one variable $x$ at a given chosen singular point of $L$, say, the origin $0$, or the ring of polynomials $\kk[x]$ or formal power series $\kk\osb x\csb$ over an arbitrary field $\kk$ of any characteristic. Moreover, $\6={\frac{d}{dx}}$ denotes the usual derivative with respect to $x$. Writing $L=\sum_{j=0}^n \sum_{i=0}^\infty c_{ij}x^i\6^j$, the operator decomposes into a sum 
\[L=L_0+L_1+\ldots +L_m+ \ldots\]
of {\it homogeneous} or {\it Euler operators} $L_k =\sum_{i-j=\tau_k}c_{ij}x^i\6^j$, where the {\it shifts} $\tau_0<\tau_1<\ldots$ of the operators $L_k$ are ordered increasingly and all $L_k$ are assumed to be non-zero. The term $L_0$ of smallest shift constitutes the {\it initial form} of $L$ at $0$, and $\tau:=\tau_0$ is called the shift of $L$ at $0$. Up to multiplying $L$ with the monomial $x^{-\tau}$ we may assume (as we will do throughout) that $L$ has shift $\tau=0$; thus $L_0=\sum_{i=0}^n c_{ii}x^i\6^i$. The point $x=0$ is singular for $L$ if at least one quotient $p_i/p_n$ has a pole at $0$ (otherwise, $0$ is called non-singular or ordinary). It is a {\it regular singularity} (in the sense of Fuchs) if $L_0$ has again order $n$, i.e., if $c_{nn}\neq 0$. %An operator $L$ with holomorphic coefficients in $\P^1_\C$ with at most regular singularities is called {\it Fuchsian}. 
The {\it indicial polynomial} of $L$ at $0$ is defined as
\[\chi_L(\tt)=\sum_{i=0}^n c_{ii} \tt^{\ul i}
=\sum_{i=0}^n c_{ii} \tt(\tt-1)\cdots (\tt-i+1).\]
Here, $\tt^{\ul i}$ denotes the falling factorial or Pochhammer symbol. Clearly, $\chi_L=\chi_{L_0}$, which we simply denote by $\chi_0$. Its roots $\rho$ in the algebraic closure $\overline{\kk}$ of $\kk$ are  the {\it local exponents} of $L$ at $0$, and $m_\rho\in\N$ will denote their multiplicity. 

\begin{rem} \label{rem:stirling}
(i) We can rewrite any differential operator in terms of $\delta:=x\partial$, the {\it Euler derivative}. The base change between $x^n\partial^n$ and $\delta$ is given by the Stirling numbers of the second kind $S_{n,k}$. This is readily verified using the recursion relation $S_{n+1,k}=kS_{n,k}+S_{n, k-1}$. This allows one to read off the indicial polynomial of an operator: If the initial form of an operator $L$ is given by $L_0=\phi(\delta)$ for some polynomial $\phi$, then the indicial polynomial of the operator is $\chi_L=\phi$.

(ii) The classical characteristic zero definition of a regular singular point of a differential equation using the growth of the local solutions cannot be translated to characteristic $p$. However, the equivalent characterization by Fuchs using the order of vanishing of the coefficients of the equation applies.
\end{rem}

We recall some basic facts from differential algebra. If $(R,\partial)$ is a differential ring (or field), a {\it constant} is an element $r\in R$, such that $\partial r=0$. The set of constants of $R$ forms a subring (or subfield). A linear differential equation of order $n$ has at most $n$ linearly independent solutions in any differential field $R$ over its field of constants. This is a simple corollary of Wronski's lemma, see \cite{SvdP03}, p.~9, or \cite{Honda81}. A set of $n$ linearly independent solutions is called a {\it full basis of solutions} of the equation in $R$. In particular, if $L\in \mathcal{O}[\partial]$ is a differential operator with holomorphic coefficients, then $Ly=0$ can only have $n$ $\C$-linearly independent solutions in $\mathcal{O}(\log x )$.

From now on we stick to characteristic $0$ and let $\mathcal{O}$ be the ring of germs of holomorphic functions at $0$.

%---------------------------------
%             SOLUTIONS EULER EQUATIONS
%---------------------------------

{\bf Euler equations.} The solutions of Euler equations $L_0y=0$ are easy to find. They are of the form 
\[y_{\rho,i}=x^\rho\log(x)^i,\]
where $\rho\in\C$ is a local exponent and $i$ varies between $0$ and $m_\rho-1$. Here, $x^\rho=\exp(\rho\log x)$ and $\log x$ may be considered either as a symbol subject to the differentiation rule $\6 x^\rr =\rr x^{\rr-1}$ and $\6 \log x= 1/x$, or as a holomorphic function on $\C_{\text{slit}}=\C\sm \R_{\geq 0}$ or on arbitrary simply connected open subsets of $\C^*=\C\sm\{0\}$. 
%One objective of the present paper is to lift these obvious solutions of $L_0y=0$ to solutions of the original equation $Ly=0$.

%---------------------------------
%             EXTENSIONS
%---------------------------------

{\bf Extensions of differential operators.} The consideration of logarithms is best formalized by introducing a new variable $z$ for $\log x$ \cite{Honda81}, \cite{Mezzarobba11}. To this end, equip the polynomial ring $\KK [\zz]$ over the field $\KK=\Quot(\O)$ of meromorphic functions at $0$ with the $\C$-derivation
\begin{gather*}
\DD:\KK [\zz]\map \KK [\zz],\\
\DD x =\6 x=1, \hs .6cm \DD\zz =x\inv,\\
%so that
\DD(x^i\zz^k) =(i\zz+k)x^{i-1}\zz^{k-1}.
\end{gather*}
This turns $\KK[z]$ into a differential ring. It carries in addition the usual derivative $\6_z$ with respect to $z$. The same definition applies to $\O x^\rho[z]$ for any $\rho\in\C$, taking $\DD x^\rho=\rho x^{\rho-1}$.

\begin{rem}
In $\mathcal{K}[z]$ every element has a primitive; it is the smallest extension of $\mathcal{K}$ for which this holds true. Indeed, $x^{-1}$ has no primitive in $\mathcal{K}$. The primitive of $x^{-1}z^\ell$ in $\mathcal{K}[z]$ is given by $\frac{1}{\ell+1}z^{\ell+1}$, while the primitive of $x^kz^\ell$ for $k\neq -1$ is given by $x^{k+1}p(z)$, where $p$ is a polynomial of degree $\ell$. Thus we may call $\mathcal{K}[z]$ the {\it primitive closure} of $\mathcal{K}$.
\end{rem}

The $j$-fold composition $\DD\circ\cdots \circ \DD$ will be denoted by $\DD^j$. For a differential operator $L=p_n\6^n+p_{n-1}\6^{n-1}+\ldots + p_1\6+p_0\in\O[\6]$ define its {\it extension} as the induced action on $\mathcal{K}[z]$, denoted by the same letter,
\[\LL:\mathcal{K}[z]\to \mathcal{K}[z]\]
If $\rho\in\C$ is a local exponent of $L$, we will likewise associate to $\LL$ the $\C$-linear map
\[\LL:\KK x^\rho[z]\map \KK x^{\rho}[z],\, x^\rho h(x) z^i\mapsto \LL(x^\rho h(x) z^i),\]
called again the extension of $L$ to $\KK x^\rho[z]$. Whenever $L$ has shift $\tau\geq 0$ -- as we will assume in the sequel -- its extension sends $\O x^\rho[z]$ to $\O x^\rho[z]$ and thus defines a $\C$-linear map 
\[\LL:\O x^\rho[z]\map \O x^{\rho}[z],\, x^\rho h(x) z^i\mapsto \LL(x^\rho h(x) z^i).\]
The Leibniz rule gives 

%-------------------------------------------------
%       LEMMA 1 SUBSTITUTION OF log(x) FOR z
%-------------------------------------------------

\begin{lem} \label{lem:substitutionlog}
Let $L$ be an operator. Then, for $\rho\in\C$, $h\in\O$, and $i\geq 0$,
\[\LL(x^\rr h(x)\zz^i)_{\vert \zz=\log x }=L(x^\rr h(x)\log(x)^i),\]
where on the right hand side $L$ acts via $\frac{d}{dx}$. In particular, the map $\O x^\rho [\zz]\map \O x^\rho [\log x ]$ given by the evaluation $z\mapsto \log x $ sends solutions of $\LL y=0$ to solutions of $Ly=0$.
\end{lem}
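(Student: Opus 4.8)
The plan is to read the identity as the statement that the substitution map
\[\eps\colon \KK x^\rr[\zz]\map \KK x^\rr[\log x],\qquad \zz\mapsto \log x\]
(leaving $x$ and the symbol $x^\rr$ untouched) is a homomorphism of differential rings, where the target is equipped with the ordinary derivative $\6=\frac{d}{dx}$ subject to $\6\log x=x\inv$ and $\6 x^\rr=\rr x^{\rr-1}$. I would run the argument over the meromorphic field $\KK$ rather than over $\O$, since on $\KK x^\rr[\zz]$ the operator $\DD$ is an honest endomorphism (on $\O x^\rr[\zz]$ this holds only for operators of shift $\geq 0$); at the end one invokes the remark made just before the lemma, that when $L$ has shift $\geq 0$ both sides of the asserted equality in fact lie in $\O x^\rr[\zz]$, resp.\ $\O x^\rr[\log x]$.

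Granting that $\eps$ intertwines $\DD$ with $\6$, the lemma follows formally. First, $\eps\circ\DD=\6\circ\eps$ upgrades to $\eps\circ\DD^{\,j}=\6^{\,j}\circ\eps$ by an immediate induction on $j$ (apply the case $j=1$ to $\DD^{\,j-1}f$). Then, writing $\LL=\sum_{j=0}^n p_j\DD^{\,j}$ with $p_j\in\O$ and using that $\eps$ is $\KK$-linear, hence fixes the coefficients $p_j$ (which do not involve $\zz$),
\[\eps\big(\LL(x^\rr h(x)\zz^i)\big)=\sum_{j=0}^n p_j\,\eps\big(\DD^{\,j}(x^\rr h(x)\zz^i)\big)=\sum_{j=0}^n p_j\,\6^{\,j}\big(x^\rr h(x)(\log x)^i\big)=L\big(x^\rr h(x)(\log x)^i\big),\]
which is the claimed formula. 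The ``in particular'' is then immediate: $\LL y=0$ gives $L(\eps(y))=\eps(\LL y)=0$.

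It remains to check $\eps(\DD g)=\6\,\eps(g)$, and by additivity of $\DD$, $\6$ and $\eps$ it suffices to take $g=x^\rr h(x)\zz^i$ with $h\in\KK$ and $i\geq 0$. The Leibniz rule for $\DD$, together with $\DD x^\rr=\rr x^{\rr-1}$, $\DD h=h'$ and $\DD\zz=x\inv$, gives
\[\DD\big(x^\rr h(x)\zz^i\big)=\rr x^{\rr-1}h(x)\zz^i+x^\rr h'(x)\zz^i+i\,x^{\rr-1}h(x)\zz^{\,i-1},\]
whose value at $\zz=\log x$ is $\rr x^{\rr-1}h(\log x)^i+x^\rr h'(\log x)^i+i\,x^{\rr-1}h(\log x)^{\,i-1}$; the ordinary product rule and $\6\log x=x\inv$ yield exactly the same expression for $\6\big(x^\rr h(x)(\log x)^i\big)$.

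There is no real obstacle here: the computation is pure bookkeeping, whose only conceptual content is that the rule $\DD\zz=x\inv$ was designed precisely so that $\zz\mapsto\log x$ commutes with a single application of $\DD$, and the short induction above is what propagates this to commutation with the entire operator $\LL=\sum p_j\DD^{\,j}$. The one point to be a little careful about is merely to carry out the argument in a ring where $\DD$ and $\6$ are defined without side conditions -- the meromorphic one -- and to return to $\O x^\rr[\zz]$ only on the last line.
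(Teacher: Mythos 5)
Your proof is correct and follows exactly the route the paper intends: the paper offers no written proof beyond the phrase ``The Leibniz rule gives'' immediately preceding the lemma, and your argument is precisely that Leibniz-rule verification on monomials $x^\rr h(x)\zz^i$, propagated to $\DD^j$ and to $\LL=\sum p_j\DD^j$ by induction and linearity. The extra care about working over $\KK$ versus $\O$ and the shift condition is a sensible (if not strictly demanded) refinement of what the paper leaves implicit.
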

%---------------------------------
%             EXAMPLE EULER
%---------------------------------

\begin{ex} The equation $x^2y''+3xy'+1=0$ with Euler operator $L_0=x^2\6^2+3x\6+1$ has indicial polynomial $\chi_0=\rho^{\ul 2}+3\rho^{\ul 1}+1=(\rr+1)^2$ with double root $\rr=-1$. The solutions of $L_0y=0$ are $y_1=x\inv$ and $y_2=x\inv\log x $. The operator $\LL_0=x^2\DD^2+3x\DD+1$ on $\O x\inv[z]$ therefore has, as it should be, solutions $x\inv$ and $x\inv z$. Indeed, $\LL_0(x\inv)=L_0(x\inv)=0$, whereas $\DD(x\inv z)=x^{-2}(-z+1)$ and 
\[\DD^2(x\inv z)=\DD(x^{-2}(-z+1))= -2 x^{-3}(-z+1) - x^{-3}=x^{-3}(2z-3)\]
give
\[\LL_0(x\inv z) = x\inv (2z-3) +3 x\inv (-z+1) +x\inv z= 0.\] %x\inv(2z-3-3z+3+z)=0$.}
\end{ex}
%---------------------------------
%             FUNCTION SPACES
%---------------------------------

{\bf Function spaces.} If $L_0$ is an Euler operator with exponents set $\Omega\subseteq \C$ and if $m_\rho$ denotes the multiplicity of $\rho\in\Omega$, the $\C$-vector space
\[\FF_0=\sum_{\rho\in\Omega} \O x^\rho[z]_{<m_\rho}\]
of polynomials in $z$ of degree $<m_\rho$ and with coefficients in $\O x^\rho$ is the correct space to look at for finding the solutions of the extended Euler equation $\LL_0y=0$, since these are of the form $x^\rho z^i$, for $\rho\in\Omega$ and $0\leq i<m_\rho$. The space $\FF_0$ is, however, in general too small to contain the solutions of the extension $\LL y=0$ if $Ly=0$ is a general equation with regular singularity and initial form $L_0$. A suitable enlargement of $\FF_0$ is necessary. The method how to do this goes back to Fuchs, Frobenius, and Thom\'e; it requires some preparation.

%---------------------------------
%             DIFFERENTIATION of \6
%---------------------------------

{\bf Differentiating differential operators.} This technique first appears in the works of Frobenius. If $s$ is another variable, write the $j$-th derivative of $x^s=\exp(s\log x )$ as $\6^jx^s=s^{\ul j}x^{s-j}$. Define then, for $\ell\geq 1$, the {\it $\ell$-th derivative} $(\6^j)^{(\ell)}$ of $\6^j$ as
\[(\6^j)^{(\ell)} x^s=(s^{\ul j})^{(\ell)}x^{s-j},\]
where $(s^{\ul j})^{(\ell)}$ denotes the $\ell$-th derivative of $s^{\ul j}$ with respect to $s$. Clearly, $(\6^j)^{(\ell)}=0$ for $\ell>j$.  
Then, for a differential operator $L=p_n\6^n+p_{n-1}\6^{n-1}+\ldots + p_1\6+p_0$ of order $n$, we get its $\ell$-th derivative $L^{(\ell)}$ for $\ell\geq 1$ as
\[L^{(\ell)}=p_n\cdot (\6^n)^{(\ell)}+p_{n-1}\cdot (\6^{n-1})^{(\ell)}+\ldots + p_{1}\cdot(\6)^{(\ell)}.\]
This is no longer a differential operator; it is just a $\C$-linear map $\O t^\rho\map \O x^{\rho+\tau}$, where $\tau$ is the shift of $L$.

%---------------------------------
%             REMARK DIFFERENTIATION IN POSITIVE CHAR
%---------------------------------
\ignore

{\bf Remark.} If we wish to work in arbitrary characteristic, it might be appropriate to define the {\it $\ell$-th derivative} $(\6^j)^{[\ell]}$ of $\6^j$ differently as
\[\ds (\6^j)^{[\ell]} x^t={1\over \ell!} (t^{\ul j})^{(\ell)}x^{t-j}=\Delta^\ell(t^{\ul j})x^{t-j},\]
where $\Delta^\ell(t^k)$ is defined as the {\it divided or Hasse derivative} of $t^k$,
\[\Delta^\ell(t^k)={k\choose \ell} t^{k-\ell}.\]

\recognize

%---------------------------------
%           START LEMMATA 2 - 3  
%---------------------------------

The following facts are readily verified, cf. Lemmata \ref{lem:66mon} and \ref{lem:monomials} for similar results in positive characteristic. Let $L$ always be a differential operator of order $n$ and shift $\tau\geq 0$. Let $\rho\in \C$ be arbitrary.

%---------------------------------
%             LEMMA 2 TAYLOR EXPANSION OF LL
%---------------------------------

\begin{lem}\label{lem:taylorL}
The extension of $L$ to $\O x^\rho[z]$ has expansion
\[\LL=L_x+L_x'\6_z +{1\over 2!} L''_x\6^2_z+\ldots + {1\over n!}L^{(n)}_x\6_z^n,\]
where the $\C$-linear maps $L_x^{(\ell)}$ act on $\O x^\rho$ while leaving all $z^i$ invariant, and $\6_z$ is the usual differentiation with respect to $z$.
\end{lem}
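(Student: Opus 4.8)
\emph{Proof plan.} The plan is to reduce to one computation per pure power $\6^j$ and to carry out that computation with a generating-function trick in an auxiliary variable. Both sides of the asserted identity are $\C$-linear operators on $\O x^\rho[z]$; since $L=\sum_j p_j\6^j$ with $p_j\in\O$ and multiplication by $p_j$ commutes with $\6_z$, everything distributes over the finite sum in $j$, so it is enough to prove, for each $j$, the operator identity
\[
\DD^j=\sum_{\ell=0}^{j}\tfrac1{\ell!}\,(\6^j)^{(\ell)}\,\6_z^\ell \qquad\text{on }\O x^\rho[z].
\]
Granting it, multiplying by $p_j$, summing over $j$ and regrouping the finite double sum gives $\LL=\sum_{\ell=0}^n\tfrac1{\ell!}\bigl(\sum_j p_j(\6^j)^{(\ell)}\bigr)\6_z^\ell=\sum_{\ell=0}^n\tfrac1{\ell!}L^{(\ell)}\6_z^\ell$, the sum stopping at $\ell=n$ because $(\6^j)^{(\ell)}=0$ for $\ell>j$; and $L^{(\ell)}$ acting on $\O x^\rho$ with the $z^i$ left inert is precisely $L_x^{(\ell)}$.

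To establish the displayed identity I would test it on the spanning monomials $x^sz^k$ and treat all the powers $z^k$ simultaneously by introducing a variable $w$ and the element
\[
x^se^{wz}:=\sum_{k\ge0}\tfrac{w^k}{k!}\,x^sz^k\ \in\ \O x^\rho[z]\osb w\csb,
\]
extending $\DD$, $\6_z$ and the maps $(\6^j)^{(\ell)}$ $w$-linearly. The point is that $x^se^{wz}$ behaves exactly like the power $x^{s+w}$: from $\DD(x^sz^k)=s\,x^{s-1}z^k+k\,x^{s-1}z^{k-1}$ one checks at once that $\DD(x^se^{wz})=(s+w)\,x^{s-1}e^{wz}$, hence by iteration $\DD^j(x^se^{wz})=(s+w)^{\ul j}\,x^{s-j}e^{wz}$. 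Taylor-expanding the degree-$j$ polynomial $(s+w)^{\ul j}$ about $w=0$, and using that $\partial_w$ and $\partial_s$ coincide on any function of $s+w$, yields $(s+w)^{\ul j}=\sum_{\ell=0}^j\tfrac{w^\ell}{\ell!}(s^{\ul j})^{(\ell)}$. On the other hand $(\6^j)^{(\ell)}(x^se^{wz})=(s^{\ul j})^{(\ell)}x^{s-j}e^{wz}$, since $(\6^j)^{(\ell)}$ leaves every $z^k$ fixed, while $\6_z(e^{wz})=w\,e^{wz}$; so $\tfrac1{\ell!}(\6^j)^{(\ell)}\6_z^\ell(x^se^{wz})=\tfrac{w^\ell}{\ell!}(s^{\ul j})^{(\ell)}x^{s-j}e^{wz}$. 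Summing over $\ell$ the two computations agree, and equating the coefficients of $w^k$ gives $\DD^j(x^sz^k)=\sum_\ell\tfrac1{\ell!}(\6^j)^{(\ell)}\6_z^\ell(x^sz^k)$ for all $s\in\rho+\N$ and $k\ge0$, which is the desired operator identity on $\O x^\rho[z]$.

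I do not anticipate a genuine obstacle here — the statement is, as advertised, ``readily verified.'' The only point that wants a little care is the interplay between differentiation in $z$ and the parameter-derivatives $\partial_s$ hidden inside $L^{(\ell)}$, and this is precisely what the substitution $z^k\rightsquigarrow e^{wz}$ resolves: it converts $\6_z$ into multiplication by $w$ and the Frobenius-style derivatives of $\6^j$ into $\partial_w$-derivatives of $(s+w)^{\ul j}$, reducing the whole claim to the elementary finite Taylor formula for that polynomial. One can alternatively prove the monomial identity $\DD^j(x^sz^k)=\sum_\ell\tfrac1{\ell!}(s^{\ul j})^{(\ell)}\,k^{\ul\ell}\,x^{s-j}z^{k-\ell}$ by a direct induction on $j$, or deduce it from Lemma~\ref{lem:substitutionlog} together with the transcendence of $\log x$ over the meromorphic functions, but the generating-function argument keeps the bookkeeping cleanest.
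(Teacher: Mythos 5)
Your proof is correct. Note that the paper itself offers no proof of this lemma --- it is listed among facts that are ``readily verified,'' with a pointer to the positive-characteristic analogues (Lemmata~\ref{lem:66mon} and \ref{lem:monomials}), which are proved there by induction on $j$ using the Pascal-type recursion $(s^{\underline{k}})^{[l]}+(s^{\underline{k}})^{[l+1]}(s-k)=(s^{\underline{k+1}})^{[l+1]}$ for (Hasse) derivatives of falling factorials. Your route is genuinely different: after the correct reduction to the single identity $\DD^j=\sum_{\ell\le j}\frac{1}{\ell!}(\6^j)^{(\ell)}\6_z^\ell$ on monomials, you package all powers of $z$ into $x^se^{wz}$, observe $\DD(x^se^{wz})=(s+w)x^{s-1}e^{wz}$, and thereby convert the whole statement into the finite Taylor expansion of the polynomial $(s+w)^{\underline{j}}$ in $w$ --- which is exactly the conceptual content of Frobenius' ``differentiation with respect to the exponent,'' since $e^{wz}$ plays the role of $x^w$ and $\6_z$ becomes multiplication by $w$. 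This buys a computation-free proof in which the recursion lemma is absorbed into the identity $\partial_w P(s+w)=\partial_s P(s+w)$; the paper's induction, by contrast, is the version that survives reduction modulo $p$ (where $1/\ell!$ must be replaced by divided powers), which is presumably why the authors phrase the characteristic-$p$ analogue that way. The only points needing a word of care in your write-up --- that the operators extend $w$-linearly and coefficient extraction in $w$ is legitimate, and that checking on monomials suffices by $\C$-linearity and $x$-adic continuity since elements of $\O x^\rho[z]$ are power series in $x$ --- are both harmless and standard.
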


%-------------------------------------------------
%       LEMMA 3 SUBSTITUTION FOR \zz
%-------------------------------------------------

\begin{lem} \label{lem:substitutionz}
If $L_0$ is an Euler operator of order $n$ with shift $0$, indicial polynomial $\chi_0(s)$, and extension $\LL_0$ to $\O x^\rho[z]$, then

\[\LL_0(x^\rr \zz^i)= x^\rr\cdot[\chi_0(\rr)\zz^i + \chi_0'(\rr) i\zz^{i-1}+{1\over 2!}\chi_0''(\rr) i^{\ul 2}\zz^{i-2}+\ldots + {1\over n!}\chi_0^{(n)}(\rr)i^{\ul n}\zz^{i-n}].\]
\end{lem}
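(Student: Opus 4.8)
The plan is to feed the monomial $x^\rho z^i$ into the Taylor-type expansion of $\LL_0$ provided by Lemma \ref{lem:taylorL} and then identify each resulting coefficient with a derivative of $\chi_0$. Recall that Lemma \ref{lem:taylorL} writes the extension of $L_0$ to $\O x^\rho[z]$ as
\[\LL_0=L_{0,x}+L_{0,x}'\,\6_z+\tfrac1{2!}L_{0,x}''\,\6_z^2+\ldots+\tfrac1{n!}L_{0,x}^{(n)}\,\6_z^n,\]
where each $\ell$-th derivative $L_{0,x}^{(\ell)}$ acts only on the $x^\rho$-factor and leaves the powers $z^i$ untouched, and $\6_z$ is ordinary differentiation in $z$. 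Applying this to $x^\rho z^i$, commuting $L_{0,x}^{(\ell)}$ past the $z$-powers, and using $\6_z^\ell(z^i)=i^{\ul\ell}z^{i-\ell}$ (which vanishes as soon as $\ell>i$), I get
\[\LL_0(x^\rho z^i)=\sum_{\ell=0}^{n}\frac1{\ell!}\,L_{0,x}^{(\ell)}(x^\rho)\cdot i^{\ul\ell}\,z^{i-\ell}.\]
So everything reduces to the single identity $L_{0,x}^{(\ell)}(x^\rho)=\chi_0^{(\ell)}(\rho)\,x^\rho$.

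To see this, use that an Euler operator of shift $0$ has the form $L_0=\sum_{j=0}^{n}c_{jj}x^j\6^j$, so by the very definition of the $\ell$-th derivative of $\6^j$ one has $L_0^{(\ell)}=\sum_{j=0}^{n}c_{jj}\,x^j(\6^j)^{(\ell)}$ with $(\6^j)^{(\ell)}x^s=(s^{\ul j})^{(\ell)}x^{s-j}$. Since in the present lemma the power-series part is trivial — the argument is the pure monomial $x^\rho$, not a general element of $\O x^\rho$ — applying $(\6^j)^{(\ell)}$ to it is literally the substitution $s=\rho$ into $(s^{\ul j})^{(\ell)}$, whence
\[L_0^{(\ell)}(x^\rho)=\sum_{j=0}^{n}c_{jj}\,x^j\,(\rho^{\ul j})^{(\ell)}x^{\rho-j}=\Big(\sum_{j=0}^{n}c_{jj}\,(\rho^{\ul j})^{(\ell)}\Big)x^\rho.\]
Now $\chi_0(s)=\sum_{j=0}^n c_{jj}s^{\ul j}$ is a polynomial in the single variable $s$ and differentiation commutes with a finite sum, so the scalar in parentheses is exactly $\chi_0^{(\ell)}(\rho)$. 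Substituting back gives the asserted formula, the terms with $\ell>i$ dropping out since then $i^{\ul\ell}=0$.

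I do not anticipate a genuine obstacle here: the statement is a bookkeeping identity, and the only real care needed is to keep the two differentiations apart — the one in the formal variable $z$, which produces $i^{\ul\ell}z^{i-\ell}$, and the one in the exponent variable $s$, which produces $\chi_0^{(\ell)}$ — and to carry the normalizing factors $\tfrac1{\ell!}$ correctly. For a more conceptual rendering I would also record an alternative proof bypassing Lemma \ref{lem:taylorL}: by Remark \ref{rem:stirling} one may write $L_0=\chi_0(\delta)$ with $\delta=x\6$ the Euler derivative, and a direct computation shows that on $x^\rho\C[z]$ the operator $\delta$ acts as $\rho+\6_z$; hence $L_0=\chi_0(\rho+\6_z)$ there, and expanding $\chi_0(\rho+\6_z)=\sum_{\ell}\tfrac1{\ell!}\chi_0^{(\ell)}(\rho)\,\6_z^\ell$ (a finite Taylor sum, $\chi_0$ being a polynomial) and applying it to $z^i$ reproduces the formula at once.
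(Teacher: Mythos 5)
Your argument is correct and is essentially the verification the paper intends: the paper states this lemma as ``readily verified'' immediately after Lemma \ref{lem:taylorL}, and your route --- apply that Taylor-type expansion to $x^\rho z^i$ and reduce everything to the identity $L_{0,x}^{(\ell)}(x^\rho)=\chi_0^{(\ell)}(\rho)x^\rho$, which follows term by term from the definition $(\6^j)^{(\ell)}x^s=(s^{\ul j})^{(\ell)}x^{s-j}$ --- is exactly the intended bookkeeping (and mirrors the positive-characteristic analogues, Lemmata \ref{lem:66mon} and \ref{lem:monomials}). Your alternative derivation via $L_0=\chi_0(\delta)$ acting as $\chi_0(\rho+\6_z)$ on $x^\rho\C[z]$ is also valid and arguably cleaner, but it is not needed.
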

%-------------------------------------------------
%       LEMMA 4 KERNEL LL_0
%-------------------------------------------------

\begin{lem}\label{lem:kernelL0}
The kernel of the extension $\LL_0$ to $\FF_0=\sum_{\rho\in\Omega}\O x^\rho[z]_{<m_\rho}$ of an Euler operator $L_0$ with exponents $\rho\in\Omega\subseteq \C$ of multiplicity $m_\rho$ equals
\[\ds \Ker(\LL_0)= \bigoplus_{\rho\in\Omega}\bigoplus_{i=0}^{m_\rho-1} \C x^\rho z^i.\]
\end{lem}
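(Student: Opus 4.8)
The plan is to establish the two inclusions of the asserted equality separately. The inclusion ``$\supseteq$'' is a direct consequence of Lemma~\ref{lem:substitutionz}: fix $\rho\in\Omega$ and $0\le i<m_\rho$; in the expansion of $\LL_0(x^\rho z^i)$ given there, the general summand $\frac1{\ell!}\chi_0^{(\ell)}(\rho)\,i^{\ul\ell}z^{i-\ell}$ vanishes for every $\ell\ge 0$. Indeed, for $\ell\le m_\rho-1$ we have $\chi_0^{(\ell)}(\rho)=0$ because $\rho$ is a root of $\chi_0$ of multiplicity $m_\rho$, while for $\ell\ge m_\rho$ we have $\ell>i$, so $i^{\ul\ell}=0$. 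Hence $\LL_0(x^\rho z^i)=0$, and by linearity $\bigoplus_{\rho\in\Omega}\bigoplus_{i=0}^{m_\rho-1}\C x^\rho z^i\subseteq\Ker(\LL_0)$. That the sum on the left is direct is clear: in a vanishing $\C$-linear combination of the monomials $x^\rho z^i$ the coefficient of each power of $z$ must vanish since $z$ is transcendental, and a relation $\sum_\rho c_\rho x^\rho=0$ forces all $c_\rho=0$ once the exponents are grouped according to their residue modulo $\Z$ and one uses that distinct integral powers of $x$ are $\C$-linearly independent in $\KK$.

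For ``$\subseteq$'' I would run a dimension count. Set $n=\ord L_0=\deg\chi_0=\sum_{\rho\in\Omega}m_\rho$, and view $\FF_0$ as a $\C$-subspace of a differential field $\mathcal M\supseteq\KK$ in which $z$ lives with $\DD z=x\inv$ and all powers $x^\rho$ ($\rho\in\C$) live with $x^\rho x^\sigma=x^{\rho+\sigma}$ and $\DD x^\rho=\rho x^{\rho-1}$; concretely $\mathcal M$ can be realized, via Lemma~\ref{lem:substitutionlog}, as a field of multivalued holomorphic functions on a sector at the origin in $\C^*$, whose field of constants is visibly $\C$. Since $x$ is a unit in $\mathcal M$, the operator $L_0$ is, up to multiplication by the unit $c_{nn}x^n$, monic of order $n$, so Wronski's lemma (\cite{SvdP03}, p.~9) bounds the dimension of the solution space of $\LL_0y=0$ in $\mathcal M$ over the field of constants of $\mathcal M$, hence over $\C$, by $n$. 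By the first part we have already exhibited $n$ linearly independent solutions lying in $\FF_0$, namely the $x^\rho z^i$ with $\rho\in\Omega$, $0\le i<m_\rho$. Consequently the solution space of $\LL_0y=0$ in $\mathcal M$ equals $\bigoplus_{\rho\in\Omega}\bigoplus_{i=0}^{m_\rho-1}\C x^\rho z^i$, and intersecting with $\FF_0$, which contains all these monomials, yields $\Ker(\LL_0\vert_{\FF_0})=\bigoplus_{\rho\in\Omega}\bigoplus_{i=0}^{m_\rho-1}\C x^\rho z^i$.

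The one point demanding genuine care is the construction of the ambient field $\mathcal M$ together with the check that its field of constants is precisely $\C$: this is exactly what turns the Wronskian bound ``over the constants'' into a bound ``over $\C$'', and it is the natural strengthening of the earlier remark that $Ly=0$ has at most $n$ $\C$-linearly independent solutions in $\O(\log x)$. If one prefers to stay entirely inside $\FF_0$, a purely computational route is available: $\LL_0$ respects the grading of $\FF_0$ by the residue of the exponent modulo $\Z$, and conjugating $\LL_0$ by a power $x^{-\rho}$ only shifts the indicial polynomial, so one reduces to the case $\Omega\subseteq\N$; a kernel element is then written as $f=\sum_i c_i(x)z^i$ with $c_i\in\O$ and $z$-degree bounded in terms of the multiplicities, and, expanding $\LL_0f$ via Lemma~\ref{lem:substitutionz}, one shows by descending induction on the order in $x$ that the coefficient of each $x^kz^i$ in $f$ with $k\notin\Omega$ or $i\ge m_k$ must vanish. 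I would present the Wronskian argument as the main proof and, if desired, relegate the computational variant to a remark.
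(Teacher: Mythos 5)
Your argument is correct, but it is worth noting that the paper offers no proof of this lemma at all: it appears in a list of facts declared ``readily verified,'' with a pointer to the positive-characteristic analogues (Lemma~\ref{lem:monomials} and Proposition~\ref{prop:Euler}), whose proofs proceed by the direct coefficient comparison that you relegate to a closing remark. Your inclusion ``$\supseteq$'' is exactly the intended computation from Lemma~\ref{lem:substitutionz} ($\chi_0^{(\ell)}(\rho)=0$ for $\ell<m_\rho$, $i^{\ul\ell}=0$ for $\ell>i$). For ``$\subseteq$'' you substitute a Wronskian dimension count in an ambient differential field $\mathcal M$ of functions on a sector; this is legitimate and arguably slicker, and it leans on the same fact the paper already invokes (``at most $n$ linearly independent solutions over the constants''), but it is intrinsically a characteristic-zero device --- the paper itself points out that the analogous ring $\mathcal R$ in characteristic $p$ is not a domain, so Wronski's lemma is unavailable there, which is presumably why the authors' model proof (Proposition~\ref{prop:Euler}) is the coefficient-comparison one. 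The only real care your route demands is what you flag yourself: injectivity of the evaluation $\FF_0\to\mathcal M$, i.e.\ linear independence of the functions $x^\rho(\log x)^i$ over $\KK$, and the identification of the constants of $\mathcal M$ with $\C$. One small correction to your computational variant: there is no ``descending induction on the order in $x$'' to run, since elements of $\FF_0$ have no top-order term in $x$; rather, because $\LL_0$ is $x$-homogeneous of shift $0$ it preserves each $x$-degree, so one treats each homogeneous component $\sum_i a_{k,i}x^kz^i$ separately and argues by descending induction on the $z$-degree, extracting first $a_{k,i_{\max}}\chi_0(k)=0$ and then, when $\chi_0(k)=0$, the coefficient $\frac{1}{m_k!}\chi_0^{(m_k)}(k)\,i_{\max}^{\ul{m_k}}a_{k,i_{\max}}$ of $z^{i_{\max}-m_k}$, forcing $i_{\max}<m_k$. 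With that adjustment both routes are sound.
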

%-------------------------------------------------
%       LEMMA 5  SOLUTIONS L_0
%-------------------------------------------------

\begin{lem}\label{lem:solutionsL0}
A $\C$-basis of solutions of an Euler equation $L_0y=0$ is given by
\[x^\rho \log(x)^i,\]
where $\rho$ ranges over all local exponents of $L_0$ at $0$ and $0\leq i <m_\rho$, with $m_\rho$ the multiplicity of $\rho$.
\end{lem}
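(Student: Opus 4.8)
The plan is to prove three things: that every function $x^\rho\log(x)^i$ in the list solves $L_0y=0$, that the list is $\C$-linearly independent, and that it is exhaustive, i.e.\ spans the whole solution space. The first point is immediate from the machinery already assembled. By Lemma~\ref{lem:kernelL0} the monomial $x^\rho z^i$ lies in $\Ker(\LL_0)\subseteq\FF_0$ whenever $\rho$ is a local exponent and $0\le i<m_\rho$; applying Lemma~\ref{lem:substitutionlog} with $h\equiv1$ then yields $L_0\bigl(x^\rho\log(x)^i\bigr)=\LL_0(x^\rho z^i)_{\vert z=\log x}=0$. (Equivalently, by Lemma~\ref{lem:substitutionz}: since $\chi_0(\rho)=\chi_0'(\rho)=\cdots=\chi_0^{(i)}(\rho)=0$ for $i<m_\rho$, every term in the expansion of $\LL_0(x^\rho z^i)$ vanishes.)

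For the linear independence I would argue with the Euler derivative $\delta=x\6$ acting on the $\C$-span $W$ of \emph{all} the functions $x^\rho\log(x)^i$, $\rho\in\C$, $i\in\N$. From $\delta\bigl(x^\rho\log(x)^i\bigr)=\rho\,x^\rho\log(x)^i+i\,x^\rho\log(x)^{i-1}$ one sees that $W_\rho:=\mathrm{span}_\C\{x^\rho\log(x)^i:i\ge0\}$ is $\delta$-stable and that $\delta-\rho$ acts on it by lowering the power of $\log x$, hence locally nilpotently; thus $W_\rho$ lies in the generalized $\rho$-eigenspace of $\delta$, and the spaces $W_\rho$ for pairwise distinct $\rho$ are automatically in direct sum. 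Inside a fixed $W_\rho$ the powers $1,\log x,\log(x)^2,\dots$ are $\C$-linearly independent, $\log x$ being a non-constant holomorphic function on the simply connected domain under consideration and therefore transcendental over $\C$. Hence the whole family $\{x^\rho\log(x)^i\}$ is $\C$-linearly independent, and a fortiori so is the sublist indexed by the local exponents with $0\le i<m_\rho$.

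Finally, exhaustiveness is a counting argument. Writing $n$ for the order of $L_0$, its indicial polynomial $\chi_0(s)=\sum_{i=0}^n c_{ii}s^{\ul i}$ has degree exactly $n$ (the leading term $c_{nn}s^{\ul n}$ is nonzero because $L_0$ has order $n$), so $\sum_\rho m_\rho=\deg\chi_0=n$ and the list contains exactly $n$ functions. On the other hand $L_0y=0$ is a linear differential equation of order $n$, so by Wronski's lemma as recalled above it has at most $n$ solutions that are $\C$-linearly independent over the constants, in any differential field extension, in particular one containing all the $x^\rho\log(x)^i$. Together with the previous paragraph this forces the list to be a full basis of solutions.

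None of the steps is deep, but the one requiring genuine care is the linear independence: the ``resonant'' exponents differing by an integer are a trap, since $x^\rho$ and $x^{\rho+k}$ are \emph{not} linearly independent over the ring of meromorphic germs, so a naive degree count in $z$ (or a monodromy argument that only sees exponents modulo $\Z$) does not directly settle it. Organizing everything around the generalized eigenspaces of $\delta$ --- whose eigenvalues are the exponents $\rho$ themselves rather than their residues modulo $\Z$ --- is what makes the direct-sum decomposition, and hence the independence, transparent.
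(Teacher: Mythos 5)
Your proposal is correct. The paper itself gives no proof of Lemma~\ref{lem:solutionsL0}: it is listed among the facts declared to be ``readily verified'' after the construction of the extension $\LL_0$ on $\O x^\rho[z]$, so there is no argument of the authors to compare against. Your three steps are exactly the intended verification. Membership in the kernel follows from Lemma~\ref{lem:substitutionz} (all Hasse-free derivatives $\chi_0^{(j)}(\rho)$ with $j\le i<m_\rho$ vanish, and $i^{\ul j}=0$ for $j>i$) together with the evaluation Lemma~\ref{lem:substitutionlog}; the count $\sum_\rho m_\rho=\deg\chi_0=n$ uses $c_{nn}\neq 0$, and Wronski's lemma (quoted in Section~\ref{subsec:constr0}) caps the number of $\C$-independent solutions at $n$, which forces spanning. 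The one step that genuinely needs an argument is the $\C$-linear independence, and your treatment via the generalized eigenspaces of $\delta=x\6$ is clean and correctly avoids the resonance pitfall: exponents differing by an integer give distinct eigenvalues of $\delta$, so the corresponding blocks are automatically in direct sum over $\C$ even though they are dependent over $\KK$, and within a block the independence of $1,\log x,\log(x)^2,\ldots$ follows from $\log x$ being non-constant. The only point worth making explicit is that Wronski's lemma is applied in a differential \emph{field} whose constants are exactly $\C$ and which contains all the $x^\rho\log(x)^i$ --- e.g.\ the fraction field of the ring generated over $\KK$ by $\log x$ and the $x^\rho$, realized as germs on a sector; this is routine but should be named, since the paper only records the statement for $\O(\log x)$.
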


%---------------------------------
%             EXAMPLE TAYLOR EXPANSION
%---------------------------------

\begin{ex}(a) For the Euler operator $L_0=x^2\6^2-3x\6+3$ from before, with indicial polynomial $\chi_0(t)=(t+1)^2$ and exponent $\rho=-1$ of multiplicity $m_\rho=2$,  the extension $\LL_0=x^2\DD^2+3x\DD+1$ to $\O x\inv [z]$ has expansion
\[\LL_0(x^\rho z^i)=x^\rho[(\rho+1)^2z^i + 2(\rho+1)iz^{i-1}+2i(i-1)z^{i-2}]\]
and  kernel
\[\Ker(\LL_0)=\C x\inv \oplus \C x\inv z.\]

%---------------------------------

(b) For the Euler operator $L_0=x^3\6^3 -4x^2\6^2+9x\6-9$ with  indicial polynomial $\chi_0(t)=(t-1)(t-3)^2$ and exponents $1$ and $3$ of multiplicity one and two, respectively, the extension $\LL_0=x^3\DD^3 -4x^2\DD^2+9x\DD-9$ to $\O x[z]\oplus\O x^3[z]$ has expansion
\[\LL_0(x^\rho z^i)= x^\rho[(\rho-1)(\rho-3)^2z^i + (3\rho-5)(\rho-3) iz^{i-1}+(6\rho -14)i^{\ul 2}z^{i-2}+ 6i^{\ul 3}z^{i-3}]\]
and kernel
\[\Ker(\LL_0)=\C x\oplus \C x^3\oplus  \C  x^3 z.\]
\end{ex}
%---------------------------------
%            IMAGE  EULER OPERATORS
%---------------------------------

\begin{rem} In order to apply the perturbation lemma \ref{lem:pert} below to an operator $L$ acting on the space $\FF_0=\sum_{\rho\in\Omega} \O x^\rho[z]_{<m_\rho}$ one has to determine the image of the initial form $\LL_0$ of $\LL$. Write $L=L_0-T$. Assuming that $L_0$ has shift $0$, it follows that $T$ is an operator with shift $>0$, that is, it  increases the order in $x$ of elements of $\FF_0$. Therefore, it sends $\FF_0$ to $\FF_0 x=\sum_{\rho\in\Omega} \O x^{\rho+1}[z]_{<m_\rho}$. One has no control about the precise image of $\TT$: it can be equal to whole $\FF_0 x$ but it can also be much smaller. The perturbation lemma requires in any case the inclusion $\Im(\TT)\subseteq\Im(\LL_0)$ of images. This would trivially hold if $\LL_0$ were surjective onto $\FF_0 x$. But this is not the case in general: it suffices to take $L_0=x^2\6^2-x\6$ with local exponents $\sigma =0$ and $\rho=2$, both of multiplicity one. Then $\FF_0=\O +\O x^2=\O$ and $\LL_0=L_0$. The image of $\FF_0$ under $L_0$ is $L_0(\FF_0)=\C x+\O x^3\subsetneq \O x=\FF_0 x$, with a gap at $x^2$. However, if $L=x^2\6^2-x\6-x=L_0-T$, the operator $T=x$ sends $x\in \FF_0$ to $x^2\not\in L_0(\FF_0)$. So the perturbation lemma does not apply to this situation. The way out of this dilemma is a further enlargement of $\FF_0$ to a carefully chosen function space $\FF$ containing $\FF_0$. This enlargement will be explained in the next section.
\end{rem}

%---------------------------------
%             MAIN THEOREM
%---------------------------------

\subsection{The normal form of differential operators} \label{subsec:nft0}

%---------------------------------
%             REMARK
%---------------------------------

When trying to lift, for an arbitrary operator $L$, the solutions $x^\rho \log x ^k$ of $L_0y=0$ to solutions of $Ly=0$, two obstructions occur. First, $\rho$ might be a multiple root of the indicial polynomial and logarithms already appear in the solutions of $\L_0y=0$. Second, if $\rho$ is not a maximal exponent of $L$ modulo $\Z$, that is, if $\rho+k$ is again an exponent of $L$ for some $k>0$, the lifting poses additional problems since higher powers of logarithms will occur among the solutions. We will approach and solve both problems simultaneously by using the extensions of operators $L$ as defined above to appropriately chosen spaces $\FF$ for which the image of the action of $\LL_0$ on $\FF$ equals $\FF x$. In this situation, the perturbation lemma \ref{lem:pert} will apply to reduce $\LL:\FF\map\FF$ via a linear automorphism of $\FF$ to $\LL_0$.

%---------------------------------
%             ENLARGEMENTS OF FUNCTION SPACES
%---------------------------------

{\bf Enlargement of function spaces.} As was done already classically \cite{Fuchs66} p.~136 and 157, \cite{Fuchs68}, p.~362 and 364, \cite{Thome72}, p.~193, \cite{Frobenius73}, p.~221, it is appropriate to partition the set of exponents of a linear differential operator $L$ into sets $\Omega\subseteq \C$ of exponents whose differences are integers and such that no exponent outside $\Omega$ has integer difference with an element of $\Omega$. We list the elements of each $\Omega$ increasingly,
 \[\rho_1<\rho_2<\cdots<\rho_r,\]
where $\rho_k < \rho_{k+1}$ stands for $\rho_{k+1}-\rho_k\in\N_{>0}$; denote by $m_k\geq 1$ the respective multiplicity of $\rho_k$ as a root of the indicial polynomial $\chi_0$ of $L$ at $0$. Set $n_k=m_1+\cdots+m_k$ and $n_0=0$. To easen the notation, we omit in each $\rho_k$ the reference to the respective set $\Omega=\{\rho_1,\ldots,\rho_r\}$. Instead of $\FF_0^\Omega=\sum_{k=1}^r \O x^{\rho_k}[\zz]_{<m_k}$ we will now allow polynomials in $z$ of larger degree $< n_k$ and take the module 
\[\ds \FF^\Omega=\sum_{k=1}^r \O x^{\rho_k}[\zz]_{<n_k}=\bigoplus_{k=1}^r \bigoplus_{i=n_{k-1}}^{n_k-1}\O x^{\rho_k}z^i
=\bigoplus_{k=1}^{r-1} \bigoplus_{i=0}^{n_k-1}\bigoplus _{\sigma=\rho_k}^{\rho_{k+1}-1}\C x^{\sigma}z^i\oplus \bigoplus_{i=0}^{n_r-1}\O x^{\rho_r}z^i,\]
equipped with the derivation $\DD$ from before (see Figure \ref{fig:image0}). The two different direct sum decompositions of $\FF$ will become relevant in a moment. Then set
\[\ds \FF=\bigoplus_\Omega \FF^\Omega,\]
the sum varying over all sets $\Omega$ of exponents with integer difference. As each summand $ \bigoplus_{i=n_{k-1}}^{n_k-1}\O x^{\rho_k}z^i$ of $\FF^\Omega$ has rank $m_k$, it follows that $\FF$ is free of rank $n$ over $\O$.

%---------------------------------
%             FIGURE 1    FUNCTION SPACE
%---------------------------------

%\cl{\functionspace}
\begin{ex}
We illustrate the construction of the space $\mathcal{F}^\Omega$ with an example. Let
\[L=x^5\partial^5-2x^4\partial^4-2x^3\partial^3+16x^2\partial^2-16x\partial-x.\]
It has indicial polynomial $\chi(s)=s^2(s-2)(s-5)^2$. Therefore the local exponents are given by $\rho_1=0$, $\rho_2=2$ and $\rho_3=5$ with multiplicities $m_1=2$, $m_2=1$ and $m_3=2$.  All local exponents differ by integers and we set $\Omega=\{0,2,5\}$ as well as $n_1=2$, $n_2=3$ and $n_5=5$. Then the space $\mathcal{F}^\Omega$ is given by
\[\mathcal{F}^\Omega=\mathcal{O} \oplus  \mathcal{O}z \oplus \mathcal{O}x^2z^2 \oplus \mathcal{O}x^5z^3 \oplus \mathcal{O}x^5z^4.\]
The exponents $(k,i)$ of  monomials $x^k z^i$ in $\FF^\Omega$ are depicted in Figure~\ref{fig:image0}.
\end{ex}

\begin{figure}[htb]
\centering
%\begin{asy}
%settings.outformat = "pdf";
%size(7cm);
%Label k = Label("$k$", position=EndPoint);
%Label i = Label("$i$", position=EndPoint);
%draw((0,0) -- (9,0), arrow=Arrow(TeXHead), L=k);
%draw((0,0) -- (0,5), arrow=Arrow(TeXHead), L=i);
%real r = 0.3;
%for(int j=1; j < 8; ++j){
%  fill(circle((j,0),r), mediumgray);
%  fill(circle((j,1),r), mediumgray);
%};
%for(int j=2; j < 8; ++j){
%  fill(circle((j,2),r), mediumgray);
%};
%for(int j=5; j < 8; ++j){
%  fill(circle((j,3),r), mediumgray);
%  fill(circle((j,4),r), mediumgray);
%};
%fill(circle((0,0),r), heavyred);
%fill(circle((0,1),r), heavyred);
%fill(circle((2,0),r), heavyred);
%fill(circle((5,0),r), heavyred);
%fill(circle((5,1),r), heavyred);
%label("$\cdots$", (8.5,2));
%label("$\rho_1$", (0,-1));
%label("$\rho_2$", (2,-1));
%label("$\rho_3$", (5,-1));
%label("$n_1-1$", (-2,1));
%label("$n_2-1$", (-2,2));
%label("$n_3-1$", (-2,4));
%
%\end{asy}
\includegraphics{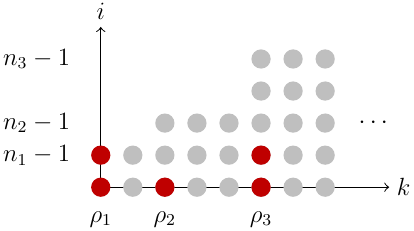}
\caption{The sets of exponents $(k,i)$ of  monomials $x^k z^i$ in $\FF^\Omega$; in red monomials in $\Ker(\LL_0)$.}
\label{fig:image0}
\end{figure}

%---------------------------------
%             EXAMPLE FUNCTION SPACE
%---------------------------------

\begin{ex} 
This example will illustrate why local exponents with integer difference have to be treated in a separate and quite peculiar way. Assume that the Euler operator $L_0$ has just two local exponents $\sigma$ and $\rho$ of multiplicities $m_\sigma$ and $m_\rho$, respectively, say $\Omega =\{\sigma,\rho\}$. If $\rho-\sigma\not\in \Z$, then
\[\FF= \O x^\sigma[z]_{<m_\sigma}\oplus \O x^\rho[z]_{<m_\rho};\]
if $\rho-\sigma\in \N$, then
\[\FF= \O x^\sigma[z]_{<m_\sigma}+ \O x^\rho[z]_{<m_\sigma+m_\rho}= \O x^\sigma[z]_{<m_\sigma}\oplus 
\O x^\rho z^{m_\sigma}[z]_{<m_\rho}.\]

The extension $\LL_0$ of $L_0$ to $\FF$ has kernel $\C x^\sigma[z]_{<m_\sigma}\oplus \C x^\rho[z]_{<m_\rho}$ in the first case, and $\C x^\sigma[z]_{<m_\sigma}\oplus\C x^\rho z^{m_\sigma}[z]_{<m_\rho}$ in the second case. The respective images of $\LL_0$ are
\[\O x^{\sigma+1}[z]_{<m_\sigma}\oplus \O x^{\rho+1}[z]_{<m_\rho}\]
and
\[\O x^{\sigma+1}[z]_{<m_\sigma}\oplus
 \O x^{\rho+1}z^{m_\sigma}[z]_{<m_\rho},\]

so they equal $\FF x$ in both cases.

%---------------------------------
%             NEW TEXT HH FROM DEC 21 2022
%---------------------------------

If we would have taken in the second case where $\rho - \sigma\in\N$ is integral the space

\[\FF=\O x^\sigma[z]_{<m_\sigma} + \O x^\rho[z]_{<m_\rho},\]

the image of $\LL_0$ would have been

\[\bigoplus_{k=1}^{\rho-\sigma-1}\C x^{\sigma+k}[z]_{<m_\sigma} \oplus \C x^\rho [z]_{<m_\sigma-m_\rho}\oplus \O x^{\rho+1}[z]_{<\max(m_\rho, m_\sigma)} \subsetneq \FF x,\]

which is strictly included in $\FF x$. Here $\C x^\rho [z]_{<m_\sigma-m_\rho}$ is to be read as $0$ for $m_\sigma\leq m_\rho$. Indeed, $x^\rho z^{m_\sigma-1}\in \FF x$ is not in the image of $\LL_0$. This would cause serious obstructions when trying to see $\LL$ on $\FF$ as a (negligible) perturbation of $\LL_0$, since the higher order terms of $\LL$ may produce images in whole $\FF x$. So the Perturbation Lemma \ref {lem:pert} below would not apply.
\end{ex}

The example suggests to admit in $\FF$ powers of the logarithm, say, of $z$, which exceed the respective multiplicity of the local exponent $\rho$ appearing in the factor $x^\rho$. The following lemma gives a precise answer of how to proceed; see Lemma~\ref{lem:surjectivity} for the corresponding result in characteristic $p$.

%---------------------------------
%    LEMMA IMAGE EULER OPERATORS
%---------------------------------

\begin{lem}\label{lem:image}
Let $L\in \O[\6]$ be an Euler operator with shift $\tau=0$. Denote by $\Omega=\{\rho_1,\ldots,\rho_r\}$ a set of increasingly ordered local exponents $\rho_k$ of $L$ with integer differences and multiplicities $m_k$. Set $n_k=m_1+\ldots +m_k$ and $\FF=\FF^\Omega =\sum_{k=1}^r \O x^{\rho_k}[\zz]_{<n_k}$.

(a) The induced map $\LL:\mathcal{F}\to \mathcal{F}$ has image $\ds \Im(\LL)=\FF x=\sum_{k=1}^r \O x^{\rho_k+1}[\zz]_{<n_k}$. 

(b) Its kernel $\Ker(\LL)=\bigoplus_{k=1}^r \C x^{\rho_k}[\zz]_{<m_k}$ (cf.~Lemma~\ref{lem:kernelL0}) has direct complement  
\[\ds \HH= \bigoplus_{k=2}^r \bigoplus_{i=m_k}^{n_k-1}\C x^{\rho_k} z^i
\oplus
\bigoplus_{k=1}^{r-1}\bigoplus _{e=1}^{\rho_{k+1}-\rho_k-1} \bigoplus_{i=0}^{n_k-1}\C x^{\rho_k+e}z^i\oplus \bigoplus_{i=0}^{n_r-1}\O x^{\rho_r+1}z^i,\]
in $\FF$. Thus the restriction ${\LL}_{\vert\HH}$ defines a linear isomorphism between $\HH$ and $\FF x$.

\end{lem}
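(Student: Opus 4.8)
The plan is to reduce everything to the explicit action of $\LL_0$ on individual monomials $x^\sigma z^i$, which is governed by Lemma \ref{lem:substitutionz}: writing $\chi_0$ for the indicial polynomial,
\[\LL(x^\sigma z^i) = x^\sigma\Bigl[\chi_0(\sigma)z^i + \chi_0'(\sigma)\,i\,z^{i-1} + \tfrac{1}{2!}\chi_0''(\sigma)\,i^{\ul 2}z^{i-2}+\ldots\Bigr].\]
The crucial observation is that $\chi_0(\sigma)=0$ precisely when $\sigma$ is some $\rho_k$, and then the order of vanishing of $\chi_0$ at $\rho_k$ is exactly $m_k$; hence for $\sigma=\rho_k$ the leading surviving term in the bracket is $\frac{1}{m_k!}\chi_0^{(m_k)}(\rho_k)\,i^{\ul{m_k}}z^{i-m_k}$, with nonzero constant $\frac{1}{m_k!}\chi_0^{(m_k)}(\rho_k)$, provided $i\geq m_k$. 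For $\sigma$ not a local exponent (the ``intermediate'' values $\rho_k+e$ with $1\le e<\rho_{k+1}-\rho_k$), the leading term $\chi_0(\sigma)z^i$ already has nonzero coefficient. This dichotomy is what makes the enlarged space $\FF$ — with the logarithm degree bound raised from $m_k$ to $n_k$ — exactly the right domain.

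First I would establish part (b) together with the ``restriction is an isomorphism'' claim, since (a) then follows. The kernel computation $\Ker(\LL)=\bigoplus_k \C x^{\rho_k}[z]_{<m_k}$ is Lemma \ref{lem:kernelL0} (noting that on $\FF^\Omega$, the only constants of the form $x^\sigma z^i$ killed by $\LL$ are those with $\sigma$ a local exponent and $i<m_\sigma$; the raised degree bound $n_k$ does not enlarge the kernel because for $m_k\le i<n_k$ the term $\frac{1}{m_k!}\chi_0^{(m_k)}(\rho_k)i^{\ul{m_k}}z^{i-m_k}$ is nonzero). Next I would check that $\HH$ as defined is an $\O$-submodule complement to $\Ker(\LL)$ inside $\FF$: this is a direct monomial bookkeeping, matching the three blocks of $\HH$ — the ``extra logarithm powers'' $m_k\le i<n_k$ in each $x^{\rho_k}$-strip, the ``intermediate exponent'' monomials $x^{\rho_k+e}z^i$, and the top strip $\O x^{\rho_r+1}z^i$ — against the complement of $\bigoplus_k\C x^{\rho_k}[z]_{<m_k}$ in $\FF^\Omega=\bigoplus_{k=1}^{r-1}\bigoplus_{i=0}^{n_k-1}\bigoplus_{\sigma=\rho_k}^{\rho_{k+1}-1}\C x^\sigma z^i\oplus\bigoplus_{i=0}^{n_r-1}\O x^{\rho_r}z^i$ (using the second direct-sum decomposition of $\FF^\Omega$ recalled in the text). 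The content is then to show $\LL$ maps $\HH$ bijectively onto $\FF x$.

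For surjectivity-with-injectivity of $\LL|_{\HH}$ I would order the monomials of $\HH$ by the $x$-degree and, within a fixed $x$-degree, by decreasing $z$-degree, and argue that $\LL$ is ``triangular with invertible diagonal'' in this order. Concretely, applying the formula above, $\LL(x^\sigma z^i)$ has $x$-degree raised by one (shift $\tau=0$ was normalized so $\LL_0$ shifts by exactly $1$ on $\FF$; here $L=L_0$ is Euler, so $\LL=\LL_0$) and its top $z$-term is $c\cdot x^{\sigma+1}z^{i-d}$ where $d=0$ if $\sigma$ is intermediate and $d=m_k$ if $\sigma=\rho_k$, with $c\ne0$ in both the intermediate case and the case $\sigma=\rho_k$, $i\ge m_k$ — which is exactly the range of $i$ occurring in $\HH$. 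Matching source monomials of $\HH$ to target monomials of $\FF x$ under $(\sigma,i)\mapsto(\sigma+1,i-d)$ gives a bijection of index sets (this is again a finite check over the three blocks, using $n_{k}\le n_{k+1}$ and the telescoping of the intermediate strips), and the lower-order $z$-terms only involve strictly smaller $z$-degree at the same or... — no, same $x$-degree. Hence in block-triangular form over $\O$ with respect to $x$-degree, and triangular in $z$-degree within each block, the matrix of $\LL|_\HH$ has invertible diagonal entries, so $\LL|_\HH\colon\HH\to\FF x$ is an isomorphism; in particular $\Im(\LL)\supseteq\FF x$, and since $\LL$ visibly raises $x$-degree by one (shift $0$) we get $\Im(\LL)=\FF x$, proving (a).

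The main obstacle is the bijection-of-monomials count in part (b): one must verify that raising the logarithm bound from $m_k$ to $n_k=m_1+\cdots+m_k$ — and no further — makes the ``deficit'' blocks $x^{\rho_k}z^i$ for $m_k\le i<n_k$ and the intermediate blocks $x^{\rho_k+e}z^i$ exactly fill the gap in $\FF x$ left by the kernel, with the shifts $d=m_k$ causing the $z$-degrees to line up across consecutive strips. This is where resonance ``cascades'' the logarithm degree, and getting the bookkeeping right (especially at the top strip $x^{\rho_r}$, where $\O$-rank rather than finite $\C$-dimension enters) is the delicate point; the algebra of each individual $\LL(x^\sigma z^i)$ is routine once Lemma \ref{lem:substitutionz} is in hand.
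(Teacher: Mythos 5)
Your overall strategy---reducing everything to the action of $\LL$ on monomials via Lemma \ref{lem:substitutionz} and exploiting triangularity in the $z$-degree, with the dichotomy between exponents $\rho_k$ (leading surviving coefficient $\frac{1}{m_k!}\chi_0^{(m_k)}(\rho_k)\,i^{\ul{m_k}}$) and non-exponents (where $\chi_0(\sigma)\neq 0$)---is exactly the paper's, which runs the same induction on the $z$-degree of the target monomial. But there is a genuine error at the heart of your bookkeeping: an Euler operator of shift $0$ does \emph{not} raise the $x$-degree by one; it preserves it. Your own quoted formula says so: every term of $\LL(x^\sigma z^i)=x^\sigma[\chi_0(\sigma)z^i+\ldots]$ carries the factor $x^\sigma$, not $x^{\sigma+1}$ (shift $0$ means precisely that $L_0=\sum_i c_{ii}x^i\6^i$ is homogeneous of degree $0$ for the grading with $\deg x=1$, $\deg\6=-1$). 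Consequently the claimed correspondence $(\sigma,i)\mapsto(\sigma+1,i-d)$ is not the leading-term map of $\LL|_\HH$, and it is not a bijection from the monomials of $\HH$ onto those of $\FF x$: take $r=2$ with $\rho_2=\rho_1+1$; then $\FF x$ contains the monomials $x^{\rho_2}z^j$ for $j<m_1$, which under your map could only come from $x^{\rho_1}z^{j+m_1}$---and these lie neither in $\HH$ nor even in $\FF$, since the $\rho_1$-strip stops at $z$-degree $n_1-1=m_1-1$. The closing step ``since $\LL$ visibly raises $x$-degree by one we get $\Im(\LL)=\FF x$'' rests on the same false premise; were it true, the counterexample $L_0=x^2\6^2-x\6$ in the Remark preceding Section \ref{subsec:nft0}, where the image on the naive space has a gap at $x^2$, could not occur.

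The repair is to keep the $x$-exponent fixed: the leading term of $\LL(x^\sigma z^i)$ is $c\,x^{\sigma}z^{i-d}$ with $d=m_k$ if $\sigma=\rho_k$ and $d=0$ otherwise, so the correct correspondence is $(\sigma,i)\mapsto(\sigma,i-d)$. The image nevertheless lands in $\FF x$ not because of a degree shift but because of the drop in $z$-degree: for $\sigma=\rho_k$ and $i<n_k$ one has $i-m_k<n_k-m_k=n_{k-1}$, and $\O x^{\rho_k}[\zz]_{<n_{k-1}}\subseteq\O x^{\rho_{k-1}+1}[\zz]_{<n_{k-1}}\subseteq\FF x$ precisely because $\rho_k-\rho_{k-1}\geq 1$; for $\sigma\notin\Omega$ the monomial $x^\sigma z^i$ already lies in $\FF x$. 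With this corrected correspondence your three blocks of $\HH$ do match the monomials of $\FF x$ (the block $x^{\rho_k}z^i$, $m_k\leq i<n_k$, covers exactly the monomials $x^{\rho_k}z^j$, $j<n_{k-1}$, that your shifted map misses), and the triangular-with-invertible-diagonal argument closes up and coincides with the paper's induction.
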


%---------------------------------
%             PROOF LEMMA IMAGE
%---------------------------------

\begin{proof}
(a) We show first that $\LL$ sends $\FF$ into $\FF x$. Recall from Lemma \ref{lem:substitutionz} that 
\[\LL(x^\rr \zz^i)= x^\rr\cdot[\chi(\rr)\zz^i + \chi'(\rr) i\zz^{i-1}+{1\over 2!}\chi''(\rr) i^{\ul 2}\zz^{i-2}+\ldots + {1\over n!}\chi^{(n)}(\rr)i^{\ul n}\zz^{i-n}].\]

Therefore, as $\chi^{(\ell)}(\rho_k)=0$ for $0\leq \ell<m_k$, and using that $n_k-m_k=n_{k-1}$ for $k\geq 2$, it follows that $\LL$ sends $\FF$ into 
\[\ds\sum_{k=1}^r \O x^{\rho_k}[\zz]_{<n_k-m_k}=
\sum_{k=2}^r \O x^{\rho_k}[\zz]_{<n_{k-1}}\subseteq 
\sum_{k=2}^r \O x^{\rho_{k-1}+1}[\zz]_{<n_{k-1}}\subseteq \FF x.\]
Here, we use that $\rho_k-\rho_{k-1}\geq 1$. This proves $\LL(\FF)\subseteq \FF x$.

For the inverse inclusion $\LL(\FF)\supseteq\FF x$ it suffices to check that all monomials $x^\sigma z^i\in \FF x$ lie in the image, where $\sigma=\rho_k+e$ for some $k=1,\ldots,r$ and $e\geq 1$, and where $i<n_k$. %This is actually the trickiest part of the proof. 
We distinguish two cases.

(i) If $\sigma\not\in\Omega$, proceed by induction on $i$. Let $i=0$. By Lemma \ref{lem:taylorL},
\[\ds \LL(x^\sigma) = L_x(x^\sigma)+\sum_{j=1}^n {1\over j!}L_x^{(j)}\6_z^j(x^\sigma) = L_x(x^\sigma) = \chi(\sigma) x^\sigma \neq 0,\]
since $\sigma$ is not a root of $\chi$. So $x^\sigma\in \LL(\FF)$. Let now $i>0$. Lemmata \ref{lem:taylorL} and \ref{lem:substitutionz} yield
\[\ds \LL(x^\sigma z^i) = L_x(x^\sigma z^i)+\sum_{j=1}^n {1\over j!}L_x^{(j)}\6_z^j(x^\sigma z^i) = \chi(\sigma) x^\sigma z^i+\chi^{(j)}(\sigma)x^\sigma\sum_{j=1}^n {i^{\ul j}\over j!} z^{i-j}.\]

By the inductive hypothesis and using again that $\chi(\sigma)\neq 0$ we end up with $x^\sigma z^i\in \LL(\FF)$.

%---------------------------------

(ii) If $\sigma\in\Omega$, write $\sigma =\rho_k$ for some $1\leq k\leq r$. As $x^\sigma z^i=x^{\rho_k}z^i\in \FF x$ and $\rho_1<\rho_2<\cdots <\rho_r$, we know that $k\geq 2$ and
\[\ds x^{\rho_k} z^i\not \in x\cdot \sum_{\ell=k}^r \O x^{\rho_\ell}[\zz]_{<n_\ell}.\]
Hence 
\[\ds x^{\rho_k} z^i\in x\cdot \sum_{\ell=1}^{k-1}\O x^{\rho_\ell}[\zz]_{<n_\ell}.\]
This implies in particular that $0\leq i<n_{k-1}$, which will be used later on. We proceed by induction on $i$. Let $i=0$. By Lemma \ref{lem:taylorL},
\begin{align*}
\ds \LL(x^{\rho_k}z^{m_k}) &=\sum_{j=0}^{m_k-1} {1\over j!}L_x^{(j)}\6_z^j(x^{\rho_k}z^{m_k}) + {1\over m_k!}L_x^{(m_k)}\6_z^{m_k}(x^{\rho_k}z^{m_k}) +
\sum_{j=m_k+1}^n {1\over j!}L_x^{(j)}\6_z^j(x^{\rho_k}z^{m_k})\\
&=\sum_{j=0}^{m_k-1} {(m_k)^{\ul j}\over j!}\chi^{(j)}(\rho_k)x^{\rho_k}z^{m_k-j} + \chi^{(m_k)}(\rho_k)x^{\rho_k}\\
&=\chi^{(m_k)}(\rho_k)x^{\rho_k}.
\end{align*}
Here, the sum in the first summand in the last but one line is $0$ since $\rho_k$ is a root of $\chi$ of multiplicity $m_k$, and for the same reason, the second summand $\chi^{(m_k)}(\rho_k)x^{\rho_k}$ is non-zero. So $x^\sigma = x^{\rho_k}\in \LL(\FF)$. Let now $i>0$ and consider $x^\sigma z^i=x^{\rho_k}z^i\in\FF x$. We will use that $i < n_{k-1}$ as observed above. Namely, this implies that $m_k+i< m_k+n_{k-1}=n_k$, so that $x^{\rho_k}z^{m_k+i}$ is an element of $\FF$. Let us apply $\LL$ to it. Similarly as in the case $i=0$ we get
\begin{align*}
\ds \LL(x^{\rho_k}z^{m_k+i}) &= \sum_{j=0}^{m_k-1} {1\over j!}L_x^{(j)}\6_z^j(x^{\rho_k}z^{m_k+i}) +{1\over m_k!}L_x^{(m_k)}\6_z^{m_k}(x^{\rho_k}z^{m_k+i}) +{}\\
\ds {}&\quad +
\sum_{j=m_k+1}^n {1\over j!}L_x^{(j)}\6_z^j(x^{\rho_k}z^{m_k+i})\\
\ds {}&={(m_k+i)^{\ul m_k}\over m_k!}\chi^{(m_k)}(\rho_k)x^{\rho_k}z^i +\sum_{j=m_k+1}^n {(m_k+i)^{\ul j}\over j!}\chi^{(j)}(\rho_k)x^{\rho_k}z^{m_k+i-j}.
\end{align*}

The sum appearing in the second summand of the last line belongs to $\LL(\FF)$ by the induction hypothesis since $m_k+i-j<i$.  As $\chi^{(m_k)}(\rho_k)\neq 0$, we end up with $x^\sigma z^i=x^{\rho_k}z^i\in \LL(\FF)$. This proves the inverse inclusion $\LL(\FF)\supseteq\FF x$ and hence assertion (a).

(b) From the shape of $\FF$ and $\Ker(\LL)$ as depicted in Figure~\ref{fig:image0} one sees that $\HH$ is a direct complement of $\Ker(\LL)$ in $\FF$. Hence ${\LL}_{\vert\HH}$ is automatically injective and $\LL(\FF)=\LL(\HH)=\FF x$. 
\end{proof}

Here is the result from functional analysis required for the proof of the normal form theorems in characteristic $0$ and $p$.

%---------------------------------
%           END   NEW TEXT HH FROM DEC 21 2022
%---------------------------------

\begin{lem}[Perturbation Lemma]\label{lem:pert}
If $\ell:F\map G$ is a continuous linear map between complete metric vector spaces which decomposes into $\ell=\ell_0-t$ with $\Im(t)\subseteq\Im(\ell_0)$ and satisfies $\abs{s(t(f))}\leq C\cdot \abs f$, $0<C<1$, for a right inverse $s:\Im(\ell_0)\map F$ of $\ell_0:F\map \Im(\ell_0)$ and all $f\in F$, then $u=\Id_F-st$ is a continuous linear automorphism of $F$ which transforms $\ell$ into $\ell_0$ via $\ell u\inv =\ell_0$.
\end{lem}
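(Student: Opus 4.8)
The plan is to build the automorphism $u$ of $F$ directly from the geometric (Neumann) series $u\inv=\sum_{k\geq 0}(st)^k$, and then verify that $u$ itself has the stated form and conjugates $\ell$ to $\ell_0$. First I would fix the right inverse $s\colon\Im(\ell_0)\map F$ of $\ell_0$, so that $\ell_0 s=\Id_{\Im(\ell_0)}$, and observe that since $\Im(t)\sub\Im(\ell_0)$ the composite $st\colon F\map F$ is well-defined and, by hypothesis, $\abs{(st)(f)}\le C\abs f$ with $0<C<1$. Hence $st$ is a contraction on the complete metric vector space $F$, so $\Id_F-st$ is invertible with continuous inverse $v\coloneqq\sum_{k=0}^\infty (st)^k$, the series converging in the operator topology because $\abs{(st)^k(f)}\le C^k\abs f$. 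This already gives that $u=\Id_F-st$ is a continuous linear automorphism of $F$; the only remaining point is the intertwining relation.

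For the intertwining relation I would compute $\ell u\inv=\ell v$ and show it equals $\ell_0$. The key identity is $\ell_0 st=t$, which holds because $s$ is a right inverse of $\ell_0$ and $\Im(t)\sub\Im(\ell_0)$: for any $f$, $\ell_0(st(f))=(\ell_0 s)(t(f))=t(f)$. Writing $\ell=\ell_0-t$ and using this, one gets
\[
\ell v=(\ell_0-t)\sum_{k\ge 0}(st)^k=\ell_0\sum_{k\ge 0}(st)^k-t\sum_{k\ge 0}(st)^k.
\]
In the first sum, split off the $k=0$ term and use $\ell_0(st)^k=(\ell_0 st)(st)^{k-1}=t(st)^{k-1}$ for $k\ge1$, so $\ell_0\sum_{k\ge0}(st)^k=\ell_0+\sum_{k\ge1}t(st)^{k-1}=\ell_0+t\sum_{k\ge0}(st)^k$. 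Subtracting the second sum leaves exactly $\ell_0$, i.e. $\ell v=\ell_0$, equivalently $\ell u\inv=\ell_0$. (One should be a little careful that the term-by-term manipulation of the series is legitimate; this follows from continuity of $\ell$ and $\ell_0$ together with the convergence $\abs{(st)^k(f)}\le C^k\abs f$, which makes all the series involved converge and lets one pass $\ell,\ell_0$ through the sums.)

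The main obstacle, such as it is, is purely the bookkeeping around convergence in the metric vector spaces: one must check that the partial sums of $\sum (st)^k$ form a Cauchy sequence uniformly enough that the limit is a genuine continuous linear map and that $\ell$ (only assumed continuous, not bounded in a normed sense) may be interchanged with the infinite sum. Since $F$ is complete and the estimate is geometric with ratio $C<1$, this is standard, but it is the one place where the hypotheses "complete metric vector space" and "continuous" are actually used rather than the algebraic identity $\ell_0 st=t$. Everything else is formal manipulation of the Neumann series, and the identity $u\inv=\sum_{k\ge0}(st)^k$ is what makes the statement $u=\Id_F-st$ and the conjugation $\ell u\inv=\ell_0$ fall out simultaneously.
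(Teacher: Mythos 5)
Your proof is correct and rests on the same two pillars as the paper's: the Neumann series $v=\sum_{k\ge 0}(st)^k$, convergent by the contraction estimate and the completeness of $F$, and the fact that $\ell_0 s=\Id$ on $\Im(\ell_0)$ combined with $\Im(t)\sub\Im(\ell_0)$. Where you diverge is in verifying the conjugation identity: you prove $\ell v=\ell_0$ head-on by distributing $\ell=\ell_0-t$ over the infinite series and telescoping via $\ell_0 st=t$, which obliges you to justify interchanging $\ell_0$ and $t$ with the infinite sum (the point you yourself flag as the main bookkeeping obstacle). The paper instead proves the equivalent \emph{finite} identity $\ell_0 u=\ell$: writing $t=\ell_0-\ell$, one computes
\[
\ell_0 u=\ell_0(\Id_F-s(\ell_0-\ell))=\ell_0-\ell_0 s\ell_0+\ell_0 s\ell=\ell_0 s\ell=\ell,
\]
using only $\ell_0 s\ell_0=\ell_0$ and $\ell_0 s\ell=\ell$ (the latter again a consequence of $\Im(t)\sub\Im(\ell_0)$). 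This sidesteps every limit interchange --- the series is needed only to know that $u$ is invertible --- and is marginally more robust in the bare ``complete metric vector space'' setting, where passing continuous operators through an infinite sum is legitimate but requires exactly the care you describe. Both arguments are sound; yours trades a cleaner algebraic shortcut for a direct computation of $\ell u\inv$.
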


%---------------------------------
\begin{proof}
The prospective inverse of $u$ is $v=\sum_{k=0}^\infty (st)^k$. It is well defined and continuous because of the estimate for $st(f)$ and the completeness of $F$. Hence $u$ is an automorphism of $F$. From $\ell_0s=\Id_{\Im(\ell_0)}$ it follows that $\ell_0 s\ell_0=\ell_0$. From $\Im(t)\subseteq\Im(\ell_0)$ one gets that the compositions $st$ and $s\ell$ are well defined and that $\ell_0s\ell=\ell$ holds. Then 
\begin{align*}
\ell_0 u &= \ell_0(\Id_F-s t)\\
&= \ell_0(\Id_F-s (\ell_0-\ell))\\
&=\ell_0(\Id_F-s \ell_0+s \ell)\\
&= \ell_0-\ell_0 s \ell_0+\ell_0 s \ell\\
&= \ell_0 s\ell\\
&=\ell,
\end{align*}
as required. This proves the result.
\end{proof}  
 
%---------------------------------
%         NORMAL FORM THEOREM 
%---------------------------------

\begin{thm}[Normal form theorem in characteristic $0$]\label{thm:nft0}
Let $L\in \O[\6]$ be a linear differential operator with holomorphic coefficients at $0$, initial form $L_0$ and shift $\tau=0$. Denote by $\Omega=\{\rho_1,\ldots,\rho_r\}$ a set of increasingly ordered local exponents $\rho_k$ of $L$ with integer differences and multiplicities $m_k$. Set $n_k=m_1+\ldots +m_k$ and $\ds \FF=\FF^\Omega =\sum_{k=1}^r \O x^{\rho_k}[\zz]_{<n_k}$. Let $\LL,\LL_0$ act on $\FF$ via $\DD x=1$ and $\DD z=x\inv$ as above. Assume that $L$ has a regular singularity at $0$.

%---------------------------------

(a)  The composition of the inverse $({\LL_0}_{\vert \HH})\inv:\FF x\map \HH$ of ${\LL_0}_{\vert \HH}$ with the inclusion $\HH\subseteq\FF$ defines a right inverse $\SS_0:\FF x\map \FF $ of $\LL_0$, again denoted by $({\LL_0}_{\vert \HH})\inv$. Let $\TT: \FF\map \FF x$ be the extension of $T=L_0-L$ to $\FF$. The map \[u=\Id_\FF - \SS_0\circ \TT:\FF\map \FF\]
is a linear automorphism of $\FF$, with inverse $\ds v=u\inv=\sum_{k=0}^\infty (\SS\circ\TT)^k:\FF\map \FF$.

(b) The automorphism $v$ of $\FF$ transforms $\LL$ into $\LL_0$,\[\LL\circ v=\LL_0.\]

(c) If $0$ is an arbitrary (i.e., regular or irregular) singularity of $L$, statements (a) and (b) hold true with $\O$ replaced by the ring $\wh \O$ of formal power series over $\C$ or over an arbitrary algebraically closed field $K$ of characteristic $0$.
\end{thm}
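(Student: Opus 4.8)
The plan is to obtain Theorem~\ref{thm:nft0} as a direct application of the Perturbation Lemma~\ref{lem:pert}, after having prepared the three ingredients the lemma requires: a complete metric structure on $\FF$, the inclusion of images $\Im(\TT)\sub\Im(\LL_0)$, and the contraction estimate for the right inverse $\SS_0$. First I would fix the $x$-adic topology on $\O$ (respectively on $\wh\O$), i.e.\ the metric given by $\abs{f}=2^{-\ord_x f}$; this makes $\O$ complete, and since $\FF$ is free of finite rank $n$ over $\O$ by the count made after its definition, $\FF$ and $\FF x$ inherit a complete metric vector space structure, with $\abs{\cdot}$ extended coordinatewise (using the basis of monomials $x^\sigma z^i$). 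The map $\LL:\FF\map\FF$ is continuous because it is $\O$-linear up to the finitely many bounded operators $L_x^{(\ell)}\6_z^\ell$ from Lemma~\ref{lem:taylorL}, each of which shifts orders by $\geq 0$; likewise $\TT$ is continuous and, crucially, \emph{strictly} raises the $x$-order, so $\abs{\TT(f)}\leq \tfrac12\abs{f}$.

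Next I would assemble the right inverse. By Lemma~\ref{lem:image}(b), since $0$ is a regular singularity so that $L_0$ has order $n$ and the analysis there applies, the restriction ${\LL_0}_{\vert\HH}:\HH\map\FF x$ is a linear isomorphism; composing its inverse with the inclusion $\HH\sub\FF$ gives $\SS_0:\FF x\map\FF$ with $\LL_0\circ\SS_0=\Id_{\FF x}$, which is part (a)'s first claim. The inclusion of images needed by the lemma, $\Im(\TT)\sub\Im(\LL_0)$, is immediate from Lemma~\ref{lem:image}(a): $\Im(\TT)\sub\FF x=\Im(\LL_0)$. So $\SS_0\circ\TT$ is a well-defined continuous operator on $\FF$. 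For the contraction constant I would argue that $\SS_0$ is ``order-nonincreasing up to a fixed bound'': working monomial by monomial, ${\LL_0}_{\vert\HH}$ sends $x^\sigma z^i$ to $\chi_0^{(\mu)}(\sigma)x^{\sigma}z^{i-\mu}+(\text{lower powers of }z)$ with a \emph{nonzero} leading coefficient (here $\mu=0$ off $\Omega$ and $\mu=m_k$ at $\rho_k$, exactly as in the proof of Lemma~\ref{lem:image}), hence its inverse $\SS_0$ preserves $x$-order; combined with $\abs{\TT(f)}\leq\tfrac12\abs f$ this yields $\abs{\SS_0(\TT(f))}\leq\tfrac12\abs f$, i.e.\ $C=\tfrac12<1$.

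With these three facts in hand, the Perturbation Lemma applied to $\ell=\LL$, $\ell_0=\LL_0$, $t=\TT$, $s=\SS_0$ gives at once that $u=\Id_\FF-\SS_0\circ\TT$ is a continuous linear automorphism of $\FF$ with inverse $v=\sum_{k\geq0}(\SS_0\circ\TT)^k$, which is part (a), and that $\LL\circ u\inv=\LL_0$, i.e.\ $\LL\circ v=\LL_0$, which is part (b). For part (c) I would observe that nothing in the argument used holomorphy: the $x$-adic completeness, the order estimates, Lemmata~\ref{lem:taylorL}, \ref{lem:substitutionz}, \ref{lem:image}, and the Perturbation Lemma are all valid verbatim with $\O$ replaced by $\wh\O=K\osb x\csb$ over any algebraically closed field $K$ of characteristic $0$, and regularity of the singularity was only used to guarantee that $L_0$ has order $n$; dropping it, $\LL_0$ may fail to have order $n$, but Lemma~\ref{lem:image} and the surjectivity $\Im(\LL_0)=\FF x$ still hold for the appropriately re-read $\FF$ (the indicial polynomial analysis is unchanged), so the same proof goes through over $\wh\O$.

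I expect the only genuine obstacle to be the contraction estimate, i.e.\ verifying carefully that $\SS_0$ does not increase $x$-order. The subtlety is the ``resonant'' block where an exponent $\rho_k$ with $k\geq2$ lies in $\FF x$: there ${\LL_0}_{\vert\HH}$ is not diagonal in the monomial basis, mixing $z$-powers within a fixed $x^{\rho_k}$, so one must check that the triangular system solved in the proof of Lemma~\ref{lem:image}(a)(ii) — which expresses $x^{\rho_k}z^i$ in terms of $\LL_0(x^{\rho_k}z^{m_k+i'})$ with $i'\leq i$ — never forces a lower $x$-order preimage. It does not, precisely because every step of that recursion stays within the same $x$-degree $\rho_k$ and only lowers the $z$-degree, so $\SS_0$ acts block-triangularly with bounded entries and $x$-order is preserved; making this bookkeeping airtight (uniformly over the finitely many blocks $\Omega$ and the finitely many shifts $e$) is the one place where care is required.
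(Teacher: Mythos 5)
Your argument correctly reproduces part (i) of the paper's proof --- the formal case --- but it does not prove parts (a) and (b) as stated, because of a false premise at the very start: the ring $\O$ of germs of holomorphic functions is \emph{not} complete for the $x$-adic metric $\abs f=2^{-\ord_x f}$. Its $x$-adic completion is $\wh\O=\C\osb x\csb$; for instance the partial sums of $\sum_k k!\,x^{k}$ form an $x$-adic Cauchy sequence in $\O$ with no limit in $\O$. Consequently the Perturbation Lemma, applied with this metric, only yields that $v=\sum_{k\ge 0}(S_0\circ T)^k$ is an automorphism of the \emph{formal} space $\wh\FF=\FF\otimes_{\O}\wh\O$; it gives no information about whether $v$ maps $\FF$ (convergent coefficients) into itself. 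This is exactly assertion (c) together with the formal version of (a)--(b), and your treatment of that part --- $T$ strictly raises the $x$-order, $S_0$ preserves it, including through the resonant triangular blocks --- agrees with the paper.

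What is missing is the convergent case, which the paper handles separately in part (ii) of its proof. There one replaces the $x$-adic metric by the Banach norms $\abs{\sum a_kx^k}_s=\sum\abs{a_k}s^k$ on the subrings $\O_s\subseteq\O$ and proves the operator-norm estimate $\abs{\abs{S_0\circ T}}_s<1$ for all sufficiently small $s>0$. This is not a formal order count but a genuine analytic estimate: one bounds the ratios $(\rho+k)^{\ul j}/\chi_L(\rho+k+i-j)$ uniformly in $k$, and it is precisely here that the hypothesis of a \emph{regular} singularity is used analytically --- $c_{nn}\neq 0$ makes the denominator a polynomial of degree $n$ in $k$ dominating the numerator of degree $j\leq n$. (In your outline regularity enters only to guarantee that $L_0$ has order $n$ so that Lemma~\ref{lem:image} applies; that is necessary but not sufficient.) So the place where care is required is not the bookkeeping of the resonant blocks, which is indeed routine, but the passage from formal to convergent solutions; without it you have proved (c) but not (a) or (b).
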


%---------------------------------
%    IGNORE  : SHAPE OF SS_0
%---------------------------------
\ignore

(b) [...adjust, including complements of kernels] A right inverse of $\LL_0$ can be given by an operator $\SS_0: \FF x\map \FF$ of the form \med

\cl{$\ds \SS_0(x^\sigma z^i)= 
{1\over \chi_0(\sigma)}
\left[x^\rho z^i - {\chi_0'(\sigma)i\over \chi_0(\sigma)}x^\rho z^{i-1} - \left[{1\over 2}{\chi_0''(\sigma)i(i-1)\over \chi_0(\sigma)}-{\chi_0'(\sigma)^2i\over \chi_0(\sigma)^2}\right]x^\rho z^{i-2}- \ldots \right]$.}\med

\recognize
%---------------------------------
%       PROOF OF THEOREM
%---------------------------------
\ignore

{\bf Proof.} As $\FF=\bigoplus \FF^\Omega$ and $\LL$, $\LL_0$ and $\TT$ are direct sums of the respective restrictions $\FF^\Omega\map \FF^\Omega$, it suffices to show the assertion for each single summand $\FF^\Omega$. So fix one set $\Omega$ of exponents with integer differences, and write $\FF$ instead of $\FF^\Omega$ for convenience. In particular, our slight abuse of notation for the sets $\Omega$ in the statement of the theorem is thus avoided.\med

\recognize
%---------------------------------
%          PROOF    PART (a)
%---------------------------------

\begin{proof}
%---------------------------------
%          PROOF    PART (c) + (d)
%---------------------------------
Note first that $u$ is well defined since the map $T$ increases the order of series and thus sends $\FF$ into $\FF x$.

Once we show that $\abs{\SS_0(\TT(f))}\leq C\abs f$ holds for some $0<C<1$ and all $f\in \FF$, the perturbation lemma \ref{lem:pert} implies that $u=\Id_\FF-\SS_0\circ \TT$ is a linear automorphism of $\FF$ with $\LL\circ u\inv = \LL_0$, proving assertions (a) to (c) of the theorem. The proof of the estimate is split into two parts, first for formal power series and then for convergent ones, and uses a different metric in each case.

%---------------------------------

(i) Formal case: Denote by $\wh \O =K\osb x\csb$ the formal power series ring over an arbitrary field $K$ of characteristic $0$, equipped with the metric $d(f,g)=2^{-\ord_0(f-g)}$, where $\ord_0$ denotes the order of vanishing at $0$. Let $\wh \FF$ denote the induced $\wh \O$-modules $\wh \FF=\FF\otimes_K \wh\O$ and write again $\LL$ for the extension $\wh \LL$ to $\wh \FF$. As $\TT$ increases the order of series in $\wh \O$, while $\LL_0$ and $\SS_0$ do not decrease the order, it follows that also $\SS_0\circ \TT$ increases the order. It thus satisfies the inequality $\abs{\SS_0(\TT(f))} \leq C\cdot \abs f$ from the beginning, for some $0<C<1$, having set $\abs f= d(f,0)=2^{-\ord f}$. Therefore the von Neumann series
\[v=\sum_{j=0}^\infty (\SS_0\circ \TT)^j\]
defines a $\C$-linear map $v:\wh \FF\map \wh \FF$. Then it is clear that $v=u\inv=(\Id_{\wh \FF}-\SS\circ \TT)\inv$. So $u$ and $v$ are automorphisms, and $\LL\circ v =\LL_0$ by the perturbation lemma. This proves assertion (c) as well as the formal version of (a).

%---------------------------------

(ii) Convergent case: To prove the same thing inside $\O$, denote by $\O_s$ the subring of germs of holomorphic functions $h$ such that $\abs h_s <\infty$. Here, $s>0$ and $\abs{\sum_{k=0}^\infty a_kx^k}_s:=\sum_{k=0}^\infty \abs {a_k}s^k$. It is well known that the rings $\O_s$ are Banach spaces, and that $\O=\bigcup_{s>0} \O_s$ \cite{GR71}. For $s>0$ sufficiently small, $u$ restricts to a linear map $u_s$ on the induced Banach space $\FF_s=\left(\sum_{k=1}^r \O x^{\rho_k}[\zz]_{<n_k}\right)_s$. For the convergence of $v_s$ it therefore suffices to prove that $\abs{\abs{\SS_0\circ \TT}}_s<1$, where $\abs {\abs-}_s$ denotes the operator norm of bounded linear maps $\FF_s\map \FF_s$. Once this is proven, $v_s=u_s\inv$ holds as before and shows that $u_s$ and hence also $u$ are linear isomorphisms. This argument provides a compact reformulation of Frobenius' proof for the convergence of solutions \cite{Frobenius73}, p.~218.

The inequality $\abs{\abs{\SS_0\circ \TT}}_s<1$ is equivalent to the existence of a constant $0<C<1$ such that
\[\abs{\SS_0(\TT(x^\rr h(x)z^i))}_s \leq C\cdot \abs {x^\rr h(x)z^i}_s\]
for all $x^\rr h(x)z^i\in \FF_s$. This will imply in particular that $(\SS_0\circ \TT)(\FF_s)\subseteq \FF_s$. 

%---------------------------------

We will treat the case where $\rho$ is a maximal local exponent of $L$ modulo $\Z$ and a simple root of $\chi_0$. In this case, no extensions of operators are required, and we can work directly with $L$, $S$ and $T$ and $\FF=\O x^\rho$. For non-maximal exponents there occur notational complications which present, however, no substantially new difficulty. So we shall omit the general case. For $h=\sum_{k=0}^\infty a_kx^k\in \O$ and writing $L=\sum_{j=0}^n p_j(x)\6^j$ with $p_j=\sum_{i=0}^\infty c_{ij}x^i$ we have 
\[\ds T(x^\rr h) = -\sum_{i-j>0}\sum_{k=0}^\infty (\rr+k)^{\ul j}\, c_{ij}a_kx^{\rr+k+i-j},\]
and, recalling that $L_0$ is assumed to have shift $0$,
\[\ds S(T(x^\rr h))= -\sum_{i-j>0}\sum_{k=0}^\infty {(\rr+k)^{\ul j}\over \chi_L(\rr+k+i-j)}\, c_{ij}a_kx^{\rr+k+i-j}.\]
As $i-j>0$, $k\geq 0$, and $\rr$ is maximal, no $\rr+k+i-j$ appearing in the denominator is a root of $\chi_L$. Hence the ratio 
\[\ds {(\rr+k)^{\ul j}\over \chi_L(\rr+k+i-j)}=   {(\rr+k)^{\ul j}\over \sum_{\ell=0}^n c_{\ell,\ell}(\rr+k+i-j)^{\ul \ell}}\]
is well defined. But $c_{n,n}\neq 0$ since $0$ is a regular singularity of $L$, and hence $(\rr+k+i-j)^{\ul n}$ appears in the denominator with non-zero coefficient. As $j\leq n$ this ensures that the ratio remains bounded in absolute value, say $\leq c$, as $k$ tends to $\infty$. Taking norms on both sides of the above expression for $S(T(x^\rr h))$ yields, for $s\leq 1$ and $h\in \O_s$, the estimate
\[\ds \abs{S(T(x^\rr h))}_s\leq 
c\sum_{i-j>0}\sum_{k=0}^\infty \abs{c_{ij}}\abs{a_k}s^{\rr+k+i-j}= 
c\sum_{i-j>0} \abs{c_{ij}}s^{i-j}\sum_{k=0}^\infty \abs{a_k}s^{\rr+k}\]

But by assumption, $p_j=\sum_{i=0}^\infty c_{ij}x^i\in \O_s$ for all $0<s\leq s_0$ and all $j=0,\ldots,n$. This implies in particular $\sum_{i>j}^\infty c_{ij}x^i\in \O_s$ and then, after division by $x^{j+1}$ and since $i> j$, that
\[\ds \sum_{i>j}^\infty c_{ij}x^{i-(j+1)}\in \O_s.\]
We get for all $s\leq r$ that 
\[\ds\sum_{i-j>0} \abs{c_{ij}}s^{i-j}=s\cdot \sum_{i-j>0} \abs{c_{ij}}s^{i-(j+1)}\leq c' s\]
for some $c'>0$ independent of $s$. This inequality allows us to bound $\abs{S(T(x^\rr h))}_s$ from above by
\[\ds \abs{S(T(x^\rr h))}_s\leq 
cc's\sum_{k=0}^\infty \abs{a_k}s^{\rr+k}=cc's\abs {x^\rr h}_s.\]
Take now $s_0>0$ sufficiently small, say $s_0\leq \min(1,r)$ and $s_0< {1\over cc'}$, and get a constant $0<C<1$ such that for $0<s\leq s_0$ one has
\[\abs {S(T(x^\rr h))}_s\leq C\cdot \abs {x^\rr h}_s.\]
This establishes $\abs{\abs{S\circ T}}_s<1$ on $\FF_s$ for $0<s\leq s_0$. By the Perturbation Lemma \ref{lem:pert}, $u_s=\Id_{\FF_s}-S\circ T$ is an automorphism of $\FF_s$ with inverse $v_s=\sum_k(S\circ T)^k$. This completes the proof of the theorem.\end{proof}
%---------------------------------
%             APPLICATIONS
%---------------------------------

\subsection{Solutions of regular singular equations} \label{subsec:sol0}

As a first consequence of the normal form theorem \ref{thm:nft0} we recover the classical theorem of Fuchs from 1866 and 1868 about the local solutions of differential equations at regular singular points \cite{Fuchs66}, \cite{Fuchs68}. The statement was reorganized and further detailed by Thom\'e and Frobenius in a series of papers between 1872 and 1875 \cite{Thome72}, \cite{Thome73}, \cite{Thome73b}, \cite{Frobenius73}, \cite{Frobenius75}. See also \cite{Fabry85} formula (9), p.~19.

%---------------------------------
%     THM SOLUTIONS  FUCHS FROBENIUS 
%---------------------------------

\begin{thm}[Local solutions in characteristic 0]\label{thm:sol0}  Let $L\in \O[\6]$ be a linear differential operator with holomorphic coefficients and regular singularity at $0$. For each set $\Omega$ of local exponents of $L$ with integer differences, let $u_\Omega:\FF^\Omega\map \FF^\Omega$ be the automorphism of assertion (a) of the normal form theorem. 

(a) Varying $\Omega$, a $\C$-basis of local solutions of $L y=0$ at $0$ is given by
\[y_{\rho,i}(x)=u_\Omega \inv(x^\rho z^i)_{\vert z=\log x },\]
for $\rho\in \Omega$ a local exponent of $L$ of multiplicity $m_\rho$, and $0\leq i< m_\rho$. 

(b) Order the exponents in a chosen set $\Omega$ as $\rho_1<\ldots <\rho_r$ and write $m_k$ for $m_{\rho_k}$. Set $n_k=m_1+\ldots + m_k$. Each solution related to $\Omega$ is of the form, for $1\leq k\leq r$ and $0\leq i<m_k$,

\[\ds y_{\rho_k,i}(x)=x^{\rho_k}[f_{k,i}+\ldots +f_{k,0}\log(x)^i]+\sum_{\ell=k+1}^r x^{\rho_\ell} \sum_{j=n_{\ell-1}}^{n_\ell-1} h_{k,i,j}(x) \log(x)^j,\]

with holomorphic $f_{k,i}$ and $h_{k,i,j}$ in $\O$, where $f_{k,0}$ has non-zero constant term.
\end{thm}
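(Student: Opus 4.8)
The plan is to derive Theorem~\ref{thm:sol0} as a direct corollary of the normal form theorem~\ref{thm:nft0}, by transporting the obvious solutions of the Euler equation $\LL_0 y = 0$ through the automorphism $v = u_\Omega^{-1}$ and then specializing $z = \log x$. First I would fix a set $\Omega = \{\rho_1 < \cdots < \rho_r\}$ of local exponents with integer differences, and recall from Lemma~\ref{lem:image}(b) (or Lemma~\ref{lem:kernelL0}) that the kernel of $\LL_0$ acting on $\FF^\Omega$ is $\bigoplus_{k=1}^r \bigoplus_{i=0}^{m_k-1} \C\, x^{\rho_k} z^i$, spanned precisely by the monomials $x^{\rho_k} z^i$ with $0 \le i < m_k$. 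Applying $v$ to each such monomial and using part (b) of the normal form theorem, $\LL \circ v = \LL_0$, we get $\LL(v(x^{\rho_k} z^i)) = \LL_0(x^{\rho_k} z^i) = 0$, so $v(x^{\rho_k} z^i) \in \Ker(\LL) \subseteq \FF^\Omega$. Then Lemma~\ref{lem:substitutionlog} shows that the evaluation $z \mapsto \log x$ carries these to genuine solutions of $L y = 0$ in $\O x^{\rho_k}[\log x]$. Summing over all $\Omega$ yields the claimed family $y_{\rho,i}(x) = u_\Omega^{-1}(x^\rho z^i)_{\vert z = \log x}$.

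The second task is linear independence and counting. Since $v = u_\Omega^{-1}$ is a $\C$-linear automorphism of $\FF^\Omega$, it sends the linearly independent family $\{x^{\rho_k} z^i\}$ to a linearly independent family in $\FF^\Omega$; distinct $\Omega$ contribute to the independent direct summands $\FF^\Omega$ of $\FF$, so the whole collection $\{v(x^\rho z^i)\}$ is $\C$-linearly independent in $\FF$. The total count is $\sum_\rho m_\rho = \deg \chi_0 = n$, the order of $L$. To conclude that the evaluated functions $y_{\rho,i}$ are themselves linearly independent over $\C$ (and hence form a \emph{full} basis), I would invoke the standard fact recalled earlier in Section~\ref{subsec:constr0}: a linear relation among functions $x^{\rho}\log(x)^i$ with coefficients in $\C$, viewed after the substitution, lifts back to a relation among the $z$-polynomials, because the monomials $x^\sigma z^i$ (over all $\sigma$ and $i$) are $\kk$-linearly independent in $\O x^\sigma[z]$ and the evaluation map is injective on each graded piece in a suitable sense; more cleanly, one can observe that Wronski's lemma bounds the solution space of $Ly=0$ in $\O(\log x)$ by $n$, so any $n$ solutions obtained from an automorphism-image of a basis of $\Ker \LL$ must be independent.

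For part (b), the shape of each solution follows by reading off how $v$ acts on $\FF^\Omega$. By construction $v = \sum_{j \ge 0} (\SS_0 \TT)^j$ where $\TT$ strictly increases the $x$-order and $\SS_0 = (\LL_{0\vert\HH})^{-1}$ maps into the complement $\HH$ described in Lemma~\ref{lem:image}(b). Starting from $x^{\rho_k} z^i$ with $i < m_k$: the term $j = 0$ contributes $x^{\rho_k} z^i$ itself; each further application of $\SS_0 \TT$ either keeps the power $x^{\rho_k}$ while possibly lowering the $z$-degree (producing the bracketed expression $x^{\rho_k}[f_{k,i} + \cdots + f_{k,0}\log(x)^i]$ after evaluation, with $f_{k,0}$ carrying the original constant term $a_0 = 1$, hence nonzero constant term) or raises the $x$-exponent into some $x^{\rho_\ell}$ with $\ell > k$ and there, by the structure of $\HH$, allows $z$-degrees up to $n_\ell - 1$ — giving the tail $\sum_{\ell > k} x^{\rho_\ell} \sum_{j=n_{\ell-1}}^{n_\ell-1} h_{k,i,j}(x)\log(x)^j$. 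Holomorphy of the coefficients is immediate since everything happens inside $\FF^\Omega$, whose $x$-coefficients lie in $\O$.

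\textbf{The main obstacle} I anticipate is the bookkeeping in part (b): making precise why $\SS_0 \TT$ can only move $x^{\rho_k}$ to strictly larger exponents $\rho_\ell$ in $\Omega$ and never introduces $z$-powers beyond the allowed ranges $n_{\ell-1} \le j < n_\ell$, and why the coefficient of $\log(x)^i$ in the $x^{\rho_k}$-block has nonvanishing constant term. This requires tracking the interaction between $\TT$ (which shifts $x$-degree up by the shifts $\tau_k > 0$ of $L$) and $\SS_0$ (which, by Lemma~\ref{lem:image}(b), lands in $\HH$, so re-inserts $z$-powers only in the slots $\HH$ permits), together with the fact that the $j=0$ term is the only one not hit by $\SS_0$, so it alone supplies the leading monomial $x^{\rho_k} z^i$. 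The independence argument and the lift-via-Lemma~\ref{lem:substitutionlog} step are routine; it is this structural description of $v$ acting block-by-block on $\FF^\Omega$ that needs care.
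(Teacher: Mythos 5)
Your proposal is correct and follows essentially the same route as the paper's own (very terse) proof: identify the kernel of $\LL_0$ on $\FF^\Omega$ via Lemma~\ref{lem:kernelL0}, pull back through the automorphism $u_\Omega^{-1}$ using $\LL\circ v=\LL_0$ from the normal form theorem, and evaluate at $z=\log x$ via Lemma~\ref{lem:substitutionlog}. You in fact supply more detail than the paper does (the injectivity of the evaluation map for linear independence, and the block-by-block analysis of $v$ for part (b)); only the aside that Wronski's bound alone forces independence is imprecise, but your primary argument via transcendence of $\log x$ covers it.
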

%---------------------------------
%             PROOF
%---------------------------------

\begin{proof}
Let $\Omega$ be a set of local exponents of $L$ at $0$ with integer differences and consider the space $\FF^\Omega =\sum_{k=1}^r \O x^{\rho_k}[\zz]_{<n_k}$ as in the statement of the normal form theorem. Extend $L$ and $L_0$ to $\FF=\bigoplus_\Omega\FF^\Omega$. By Lemma \ref{lem:kernelL0}, a $\C$-basis of solutions of $\LL_0$ is given by the monomials $x^\rho z^i$, $0\leq i \leq m_\rho-1$, where $\rho$ is a local exponent of multiplicity $m_\rho$. By assertion (d) of the normal form theorem and since $L$ and $L_0$ have the same order $n$, the pull-backs $u\inv(x^\rho z^i)$ form a $\C$-basis of solutions of $\LL y=0$. Now Lemma \ref{lem:substitutionlog} gives the result.
\end{proof}

%---------------------------------
%             REMARK   COEFFICIENT FUNCTIONS OF SOLUTIONS
%---------------------------------

\begin{rem} The coefficient functions $f_{k,i}$ and $h_{\rho,i,j}\in\O$ of the solutions in assertion (b) of the theorem are related to each other. For instance, if $\rho$ is a maximal exponent in $\Omega$ of multiplicity $m_\rho$, then 
\begin{align*}
y_{\rho,0}&=x^\rr\cdot g_0\\
y_{\rho,1}&=x^\rr\cdot [g_1+g_0 \log x ]\\
&\ldots\\
y_{\rho,m_\rho-1}&=x^\rr\cdot [g_{m_\rho-1}+ g_{m_\rho-2}\log x +\ldots + g_1 \log(x)^{m_\rho-2} + g_0 \log(x)^{m_\rho-1}], 
\end{align*}
with holomorphic $g_0,\ldots,g_{m_\rho-1}$, where $g_0$ has non-zero constant term.
\end{rem}
%---------------------------------
%          EXAMPLE    SOLUTIONS TWO EXPONENTS
%---------------------------------

\begin{ex} (i) If $L$ has exactly two exponents $\sigma$ and $\rho$, with $\rho-\sigma\in\N_{>0}$ and of multiplicities $m_\sigma$ and $m_\rho$, respectively, we get accordingly
\[\FF= x^\sigma[\O \oplus \cdots \oplus \O z^{m_\sigma -1}] +  x^\rho [\O\oplus \cdots\oplus \O z^{m_\sigma+m_\rho-1}].\]
which we rewrite as
\[\FF= x^\sigma[\O \oplus \cdots \oplus \O z^{m_\sigma -1}] \oplus  x^\rho [\O z^{m_\sigma}\oplus \cdots\oplus \O z^{m_\sigma+m_\rho-1}].\]
A basis of solutions of $Ly=0$ are $\O$-linear combinations
\begin{align*}
y_{\sigma,0}&=x^\sigma\cdot h_0+x^\rho g_0\log x ^{m_\sigma}\\
y_{\sigma,1}&=x^\sigma\cdot [h_1+h_0 \log x ]+x^\rho\log(x)^{m_\sigma}[g_1+g_0\log x ]\\
&\ldots\\
y_{\sigma,m_\sigma -1}&=x^\sigma\cdot [h_{m_\sigma-1}+ h_{m_\sigma-2}\log x +\cdots + h_1 \log(x)^{m_\sigma-2} + h_0 \log(x)^{m_\sigma-1}]+ {}\\
&\quad{}+x^\rho\log(x)^{m_\sigma}\cdot[g_{m_\rho-1}+\cdots+ g_0 \log(x)^{m_\rho -1}]\\
y_{\rho,0}&=x^\rr\cdot f_0\\
y_{\rho,1}&=x^\rr\cdot [f_1+f_0 \log x ]\\
&\ldots\\
y_{\rho,m_\rho-1}&=x^\rr\cdot [f_{m_\rho-1}+ f_{m_\rho-2}\log x +\ldots + f_1 \log(x)^{m_\rho-2} + f_0 \log(x)^{m_\rho-1}]
\end{align*}
with holomorphic $f_0,\ldots,f_{m_\rho-1}$, $g_0,\ldots,g_{m_\rho-1}$, $h_0,\ldots,h_{m_\sigma-1}$.\\

(ii) The function $e^x\log x $ satisfies the differential equation $Ly=0$ for 
\[L=x^2\partial^2+(1-2x)x\partial + x(x-1).\]
A basis of solutions is completed by $e^x$. The initial form of $L$ is 
\[L_0= x^2\partial^2+x\partial.\]
Consequently, the only local exponent of $L$ is $0$ with multiplicity $2$. The basis of solution is, as expected, contained in
\[\mathcal{O} \oplus \mathcal{O}z.\]
\end{ex}

\subsection{Applications in characteristic zero}\label{subsec:app0}
%---------------------------------
%             IRREGULAR SINGULARITIES
%---------------------------------
{\bf Irregular singularities.} Whenever the point $0$ is an irregular singularity of a differential operator $L\in\O[\6]$ with holomorphic coefficients, i.e., when $n_0=\ord\,L_0<\ord\, L=n$, Theorem \ref{thm:sol0} does not provide a basis of solutions of $Ly=0$, but only $n_0$ linearly independent solutions thereof. It is well known that the solutions which are missing for a full basis are more complicated and may have essential singularities \cite{Fabry85}. More specifically, they are of the form
\[y(x)=\exp(q(x)) \cdot x^\rho \cdot \left[h_0(x)+h_1(x)\log x +\ldots+h_k(x)\log(x)^k\right],\]
where $q\in\C(x)$ is a rational function, $\rho\in\C$ a local exponent of $L$, and $h_i$ holomorphic \cite{Salvy19}, Thm.~3, \cite{Ince44}, Chap.~XVII, p.~417.  Actually, one can even take for $q$ a Laurent polynomial
\[ q(x) =\sum_{r\in\Q_{>0}} c_r x^{-r},\]
with $c_r\in\C$, almost all $c_r=0$. It suffices to take here $r>0$ since summands with non-negative exponents produce holomorphic factors in $y(x)$. In \cite{Merkl22}, Nicholas Merkl describes an algorithm how to construct these solutions by reducing the differential equation $Ly=0$ to various differential equations $\wt Ly=0$, all with regular singularity at $0$, to apply then to these new equations the normal form theorem in characteristic $0$, Theorem \ref{thm:nft0}, to obtain their respective solutions as in \ref{thm:sol0}. It then suffices to pull back these solutions to the original equation via the inverse conjugations. Doing this for all induced equations $\wt Ly=0$, one eventually obtains a basis of solutions of $Ly=0$.

This shows that the normal form theorem \ref{thm:nft0} is applicable to {\it all} linear differential equations with holomorphic coefficients to construct their solutions. In the irregular case, there is also a method to find the solutions using the {\it Newton polygon} of $L$: it is similar in substance, though more computational, see \cite{SvdP03}, section~3.3, p.~90.
\med

We briefly sketch Merkl's algorithm: Let be given an operator $L\in\O[\6]$ of order $n$. Denote by $\delta=x\6$ the basic Euler operator, and define, for $r\in \Q_{\geq 0}$ a positive rational number, the weighted operator
\[\delta_r= x^r\delta.\]
Here, $x^r$ is considered either as a symbol or as a {\it Puiseux monomial} with $(x^r)'=rx^{r-1}$. Writing $r=e/d$ with $e,d\in\N$, we may then expand formally $L$ as a linear combination
\[ L=\sum_{j=0}^n \sum_{i=0}^\infty c_{ij} x^{i/d}\delta_r^j,\]
with coefficients $c_{ij}x^{i/d}$. For each $j$, let $i=i_j\in \N$ be minimal with $c_{ij}\neq 0$ (we suppress here the reference to $r$). Then define the {\it weighted initial form} $L_{0,r}$ and the {\it weighted indicial polynomial} $\chi_r$ of $L$ with respect to $r$ as
\begin{align*}
L_{0,r}&=\sum_{j=0}^n c_{i_jj} \delta_r^j\in \C[\delta_r],\\
\chi_r&=\sum_{j=0}^n c_{i_jj} s^j\in \C[s].
\end{align*}

For $r=0$ we just get the classical initial form $L_0=L_{0,0}$ and its indicial polynomial $\chi=\chi_0$ defined earlier. Note that for generic $r$, the polynomial $\chi_r$ will be a monomial and hence have the unique root $0$ in $\C$. The interesting values of $r$ occur when $\chi_r$ is at least a binomial and thus also has roots $\neq 0$ in $\C$. These values of $r$ correspond to the slopes of the  Newton polygon of $L$ and are also known as {\it dicritical values} or {\it weights} \cite{SvdP03} section 3.3, p.~90. The {\it dicritical weighted local exponents} of $L$ with respect to $r$ are defined as the non-zero roots of $\chi_r$ in $\C$. We set
\begin{align*}
\Omega_r&= \{\rho\in\C, \, \chi_r(\rho)=0\},\\
\Omega^*_r&=\Omega_r\sm\{0\}= \{\rho\in\C^*, \, \chi_r(\rho)=0\}.
\end{align*}
Here, $\Omega^*_r$ is non-empty if and only if $r$ is dicritical for $L$. Merkl then proves

%---------------------------------
%             LEMMA UNION OF EXPONENTS
%---------------------------------

\begin{lem} The number of classical local exponents of $L$ plus the number of dicritical weighted local exponents of $L$ with respect to rational weights $r>0$, both counted with their multiplicities, equals $n$, the order of $L$.
\end{lem}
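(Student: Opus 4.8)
The plan is to connect the count of weighted local exponents to the Newton polygon of $L$, since the dicritical values $r$ are precisely the slopes of the Newton polygon and the weighted indicial polynomial $\chi_r$ encodes the behaviour of $L$ along the corresponding edge. First I would set up the Newton polygon $N(L)$ of $L$ in the $(i,j)$-plane, plotting the support points $(i,j)$ with $c_{ij}\neq 0$ when $L=\sum c_{ij}x^i\6^j$ is written in terms of $\delta=x\6$ (equivalently, using the points $(v_j,j)$ where $v_j$ is the $x$-order of the coefficient of $\delta^j$); the relevant lower boundary is the Newton polygon in the sense of \cite{SvdP03}, section~3.3. For a weight $r=e/d>0$, the weighted initial form $L_{0,r}$ collects exactly the monomials lying on the supporting line of slope determined by $r$, and $\deg_s \chi_r - \ord_s\chi_r$ equals the horizontal length (difference of $j$-coordinates of the endpoints) of that edge. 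Hence the number of \emph{dicritical} weighted local exponents with respect to $r$, which is $\deg_s\chi_r - \ord_s\chi_r$ (the number of nonzero roots of $\chi_r$, counted with multiplicity, once we discard the factor $s^{\ord_s\chi_r}$), is exactly the horizontal width of the edge of $N(L)$ with slope corresponding to $r$.

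The key steps, in order, would be: (1) Define $N(L)$ carefully and observe that the edges with strictly positive slope correspond bijectively to the positive dicritical weights $r$, while the vertical edge at $i=i_0$ (the lowest-order monomials, i.e.\ the slope-$0$ part) corresponds to the classical initial form $L_0=L_{0,0}$ and its indicial polynomial $\chi=\chi_0$. (2) For each such edge, identify $\deg_s\chi_r-\ord_s\chi_r$ with the horizontal length of that edge; this is the heart of the combinatorics and follows by unwinding the definition of $L_{0,r}$ as the sum over $j$ of the minimal-order terms $c_{i_jj}\delta_r^j$ — only the $j$'s realizing a point on the edge contribute a nonzero monomial of degree $j$ not divisible by a higher power of $s$ forced to vanish, so the spread of surviving exponents in $s$ is the spread of $j$-coordinates along the edge. (3) Count the classical local exponents: the number of roots of $\chi_0$ in $\C$ with multiplicity is $\deg_s\chi_0 = n_0 = \ord L_0$ minus the order of vanishing of $\chi_0$ at $s=0$; more precisely, it is the horizontal length of the vertical/lowest edge together with the contribution of $s=0$ as a root (the multiplicity of $0$ as root of $\chi_0$). (4) Sum over all edges: the total horizontal extent of the Newton polygon from $j=0$ to $j=n$ telescopes to $n$. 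Concretely, writing the relevant vertices of $N(L)$ with $j$-coordinates $0=j_0<j_1<\cdots<j_m=n$, the classical exponents account for $j_1$ of them (counting $s=0$ roots of $\chi_0$ with multiplicity, plus the nonzero ones coming from the bottom edge if it is non-vertical), and the dicritical exponents for weight $r_\nu$ account for $j_\nu - j_{\nu-1}$, giving the telescoping sum $\sum_{\nu}(j_\nu-j_{\nu-1}) = n$.

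The main obstacle I anticipate is the bookkeeping of multiplicities at $s=0$ versus the nonzero roots: the statement lumps together ``classical local exponents'' (roots of $\chi_0$, possibly including $0$) and ``dicritical weighted local exponents'' (only the \emph{nonzero} roots of $\chi_r$ for $r>0$), so one must be careful that the root $s=0$ of $\chi_r$ for $r>0$ is never counted — it corresponds to the ``trivial'' part of the weighted initial form and is excluded by definition of $\Omega_r^*$ — while for $r=0$ the root $s=0$ \emph{is} counted among the classical exponents (it is the exponent $\rho=0$, with its multiplicity equal to $j_1$ minus the number of nonzero roots of $\chi_0$). Making the edge-to-$j$-interval correspondence precise when $\chi_r$ has repeated roots, and checking that the degrees add up without double-counting the shared vertices between consecutive edges, is the only genuinely delicate point; everything else is a direct translation between the Newton polygon picture and the definitions of $\chi_r$ and $\Omega_r^*$ given above. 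I would close by noting that this is essentially the classical fact that the order of a differential operator equals the sum of the horizontal lengths of the edges of its Newton polygon, repackaged in the language of weighted indicial polynomials.
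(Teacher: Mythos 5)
The paper does not actually prove this lemma: it is quoted from Merkl's thesis (\cite{Merkl22}) and used as a black box, so there is no in-paper argument to compare yours against. That said, your Newton-polygon telescoping argument is the standard proof of this fact and is correct in outline; the paper itself signals that this is the intended mechanism when it remarks that the dicritical values are the slopes of the Newton polygon of $L$. Two points deserve care. First, for the identification of $\deg_s\chi_r-\ord_s\chi_r$ with the horizontal length of an edge you must read the definition of $\chi_r$ as collecting only those $j$ whose minimal-order term $c_{i_jj}x^{i_j/d}$ realizes the \emph{overall} minimal weighted order $\min_j(i_j+je)/d$ (this is forced by the paper's remark that $\chi_r$ is a monomial for generic $r$, but the displayed formula $\chi_r=\sum_{j=0}^n c_{i_jj}s^j$ does not say so literally); with that reading, the support of $\chi_r$ is exactly the set of $j$-coordinates of support points on the edge of slope $r$, so its degree and $s$-adic order are the $j$-coordinates of the edge's endpoints, and the count of nonzero roots with multiplicity is their difference -- there is nothing further to check about repeated roots. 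Second, your accounting of the classical exponents wobbles: the number of classical local exponents with multiplicity is simply $\deg_s\chi_0=n_0=\ord L_0$, \emph{including} the root $s=0$ with its full multiplicity (you first subtract the order of vanishing at $0$ and then add it back); equivalently, any edges of negative slope to the left of the flat part are absorbed into the multiplicity of $0$ as a classical exponent, so the telescoping should start at $j_1=n_0$ and run over the positive-slope edges only, giving $n_0+(n-n_0)=n$. With those two clarifications your argument is complete.
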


%---------------------------------

In the case of a regular singularity, all local exponents are classical and no weighted local exponents appear. So we will assume henceforth that there is at least one weighted local exponent $\rho$, for some dicritical $r\in\Q^*$. Choose and then fix such a pair.

After these preparations, the first step in the algorithm is to replace in the differential equation $Ly=0$ the variable $y$ by 

\[ \exp(-{\rho\over r} x^{-r})y=e^{-{\rho\over r} x^{-r}}y.\]

This substitution corresponds to a conjugation of $L$ with the multiplication operator given by the indicated exponential function. If we write 
\[ L=\sum_{j=0}^n a_j(x)\delta_r^j\]
the conjugated operator is, see \cite{Merkl22} p.~13., given as 
\[\wt L=\sum_{j=0}^n\left(\sum_{k=j}^n {k\choose j}\rho^{k-j}a_j(x)\right)\delta_r^j.\]
It is then shown that the conjugation associated to a weighted local exponent $\rho$ of weight $r>0$ translates the weighted local exponents of $L$ by $\rho$, i.e., $\wt L$ has weighted local exponents $\sigma-\rho$ with respect to $r$ \cite{Merkl22}, Prop.~3.10, p.~29. In particular, the original $\rho$ becomes $0$ and is thus no longer dicritical for $\wt L$. Iterating this process of conjugation one arrives at a differential equation which has no dicritical weights at all. This is equivalent to saying that the final differential operator $L^*$ has a regular singularity at $0$. Thus the normal form theorem \ref{thm:nft0} applies to $L^*$ and produces as many linearly independent solutions  of $L^*y=0$ as its order indicates, using Theorem \ref{thm:sol0}. Tracing back the conjugations of $L$ and varying the algorithm over all dicritical weights $r$ and their weighted local exponents $\rho$, one ends up with a full basis of solutions of the original differential equation. This reproves in a constructive and systematic way Fabry's theorem about the existence and description of the solutions of irregular singular differential equations.

%---------------------------------
%             EXAMPLE  IRREGULAR
%---------------------------------

\begin{ex} The divergent series $y(x) =\sum_{k=0}^\infty k! x^{k+1}$ satisfies the second order equation 
\[Ly=x^3y''+(x^2-x)y'+y=0.\]
The initial form of $L$ at $0$ is given by the first order operator $L_0=-x\6+1$. Hence $0$ is an irregular singularity of $L$. The function $z(x)= \exp(-{1\over x})$ is a second solution of $Ly=0$; it is no longer a formal power series.
\end{ex}

%---------------------------------
%             APPARENT SINGULARITIES
%---------------------------------

{\bf Apparent singularities.} The formulas for the solutions of $Ly=0$ are somewhat complicated whenever the sets $\Omega$ of local exponents are not single valued. But if $\Omega=\{\rho\}$ has just one element $\rho$, i.e., no other local exponent of $L$ is congruent to $\rho$ modulo $\Z$, and if $\rho$ has multiplicity $m_\rho$, the respective solutions are simpler, of the form, for $0\leq i<m_\rho$,
\[\ds y_{\rho,i}(x)=x^\rho[f_i+\ldots +f_i\log(x)^i].\]
If some local exponents have multiplicity $\geq 2$ logarithms are forced to appear. If all local exponents are simple roots of the indicial polynomial, it may happen that no logarithms appear in the solutions. This situation is known as the presence of {\it apparent singularities}.

%---------------------------------
%             THM  APPARENT SINGULARITIES
%---------------------------------

\begin{thm} [Apparent singularities] Let $L\in\O[\6]$ be a differential operator with holomorphic coefficients and regular singularity at $0$. Assume that all local exponents are integers and simple roots of the indicial polynomial of $L$ at $0$, and write $L=L_0-T$ with initial form $L_0$ of $L$. If $\Im(T)\subseteq \Im(L_0)$ in $\O$, the local solutions of $Ly=0$ at $0$ are holomomorphic functions.
\end{thm}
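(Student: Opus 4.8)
The plan is to run the perturbation argument of Theorem~\ref{thm:nft0} directly on the logarithm-free function space, without performing the enlargement to $\FF^\Omega$. The point is that the passage to $\FF^\Omega$ (admitting powers of $z$ up to $n_k$) was forced only because in general $\Im(\TT)\not\subseteq\Im(\LL_0)$ on the small space; here the inclusion $\Im(T)\subseteq\Im(L_0)$ is assumed, so the Perturbation Lemma~\ref{lem:pert} already applies on the $z$-free space, and any solution it produces cannot contain logarithms. Setup: since all local exponents are integers, there is a single class $\Omega=\{\rho_1<\dots<\rho_r\}$ of exponents with integer differences, and since they are simple roots $m_k=1$ for all $k$, so $n=r$. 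Taking the exponents to be nonnegative (which is in any case needed for the holomorphy conclusion; otherwise the same argument runs on $x^{\rho_1}\O$ and yields solutions of the shape $x^{\rho_k}h_k(x)$ with $h_k\in\O$), the relevant space is simply $\FF_0=\O$. By Lemma~\ref{lem:substitutionz} with $i=0$, the initial form acts diagonally on monomials, $L_0(x^m)=\chi_0(m)\,x^m$, so $\Ker(L_0|_\O)=\bigoplus_{k=1}^r\C x^{\rho_k}$ has dimension $r=n$, and $\Im(L_0)=H_0$ with topological complement $H_0=\overline{\bigoplus}_{\,m\ge 0,\ m\notin\Omega}\C x^m$ of the kernel.

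Construction of the conjugation: define the right inverse $S_0\colon\Im(L_0)\to\O$ of $L_0$ by $S_0(x^m)=\chi_0(m)^{-1}x^m$ for $m\notin\Omega$; this is well defined because $\chi_0(m)\neq 0$ off $\Omega$, and it preserves the order of a series. By hypothesis $\Im(T)\subseteq\Im(L_0)$, so $S_0\circ T\colon\O\to\O$ is well defined, and since $T=L_0-L$ has shift $\ge 1$ it raises orders by at least $1$, hence so does $S_0\circ T$. The formal estimate $|S_0(T(f))|\le\tfrac12|f|$ in the $2^{-\ord}$-metric then puts us in the situation of Lemma~\ref{lem:pert}, which produces the automorphism $u=\Id_{\wh\O}-S_0\circ T$ of $\wh\O=K\osb x\csb$ satisfying $L\circ u^{-1}=L_0$. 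Pulling back the solutions $x^{\rho_1},\dots,x^{\rho_r}$ of $L_0y=0$ yields $n$ linearly independent formal solutions $u^{-1}(x^{\rho_k})$ of $Ly=0$, none of which involves $z$, hence none of which involves a logarithm.

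To obtain solutions in $\O$ rather than merely in $\wh\O$ one must show $\|S_0\circ T\|_s<1$ for small $s>0$ in the Banach norm $|{\cdot}|_s$, and this is the only real work; I expect it to be the main obstacle, although it is a strict simplification of the estimate already carried out in part~(ii) of the proof of Theorem~\ref{thm:nft0} (no logarithmic powers and no non-maximal-exponent bookkeeping intervene). It goes as follows: because $0$ is a regular singularity, $\chi_0$ has degree $n$ with nonzero leading coefficient, so the ratios $m^{\ul j}/\chi_0(m+d)$ with $j\le n$, $d\ge 1$ and $m+d\notin\Omega$ are bounded by a constant $c$ uniformly in $m,d,j$; here the hypothesis $\Im(T)\subseteq\Im(L_0)$ is used once more, to guarantee that the potentially resonant terms of $T(x^m)$ (those landing on an exponent in $\Omega$) cancel, so $S_0$ is never evaluated at a root of $\chi_0$. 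Combining this with the convergence of the coefficients $p_j=\sum_i c_{ij}x^i\in\O$, which gives $\sum_{i>j}|c_{ij}|s^{i-j}\le c's$ for small $s$, one gets $|S_0(T(x^m))|_s\le cc's\,|x^m|_s$ and hence $\|S_0\circ T\|_s\le cc's<1$ for $s$ small enough. By Lemma~\ref{lem:pert} again, $u$ is then an automorphism of $\O$, so the $u^{-1}(x^{\rho_k})$ are $n$ linearly independent holomorphic solutions of $Ly=0$; by Wronski's lemma every local solution is a $\C$-linear combination of these, and is therefore holomorphic.
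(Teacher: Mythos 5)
Your proposal is correct and follows essentially the same route as the paper, whose proof is a one-line remark that under $\Im(T)\subseteq\Im(L_0)$ the perturbation argument of Theorem~\ref{thm:nft0} runs on the logarithm-free space, so the pulled-back solutions $u^{-1}(x^{\rho_k})$ involve no $z$ and, the exponents being integral, are holomorphic. You merely spell out the details the paper leaves implicit (the diagonal right inverse, the formal and Banach-norm estimates, and the observation that the resonant coefficients of $T(f)$ — as a whole series, not monomial by monomial — vanish by hypothesis so $S_0$ is never evaluated at a root of $\chi_0$).
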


%---------------------------------

\begin{proof} This is an immediate consequence of the proof of the normal form theorem, since in case $\Im(T)\subseteq \Im(L_0)$ no extensions of the differential operators to larger function spaces involving logarithms are needed. As the local exponents are integral, the assertion follows from the description of the solutions.
\end{proof}

%---------------------------------
%             GEVREY
%---------------------------------

{\bf Gevrey series.} By a theorem of Maillet, every power series solution $y(x)$ of an equation $Ly=0$ with holomorphic coefficients is a {\it Gevrey-series}, i.e., there exists an $m\in\N$ such that the $m$-th Borel transform 
\[\ds y(x)=\sum_{k=0}^\infty a_kx^k\mapsto \wt y(x)= \sum_{k=0}^\infty {a_k\over (k!)^m}x^k\]
of $y(x)$ converges \cite{Maillet03}. This result can also be seen as a consequence of the normal form theorem: It suffices to apply the norm estimates in part (ii) of the convergence proof to the series $\wt h(x) = \sum_{k=0}^\infty {a_k\over (k!)^m} x^k$ with $m=n-n'$, where $n'$ denotes again the order of the initial form $L_0$ of $L$ at $0$. Exploiting this one proves that the composition of the automorphism $v=u\inv$ of $\wh \O$ with the $m$-th Borel transform sends the solutions $x^\rho$ of $L_0y=0$, for $\rho\in\Z$ a local integer exponent of $L$, to a convergent power series $x^\rho \wt h(x)$. The key step is to see that the ratio
\[\ds {(\rr+k)^{\ul j}\over \chi_L(\rr+k+i-j)}=   {(\rr+k)^{\ul j}\over \sum_{\ell=0}^n c_{\ell,\ell}(\rr+k+i-j)^{\ul \ell}}\]
will be replaced by
\[\ds {(\rr+k)^{\ul j}\over \chi_L(\rr+k+i-j)}=   {(\rr+k)^{\ul j}\over \sum_{\ell=0}^{n'} c_{\ell,\ell}(\rr+k+i-j)^{\ul \ell}(k!)^m}\]
to obtain the required convergence. We omit the details.\qed

\section{Differential equations in positive characteristic} \label{sec:deqp}

\subsection{The lack of primitives in characteristic $p$} \label{subsec:lackp}

From now on let $\kk$ be a field of characteristic $p>0$. If we try to transfer the description of a basis of solutions of differential equations over $\C$ to fields of characteristic $p$, substantial obstructions occur, as the following example shows.

\begin{ex} \label{ex:zi}
(i) Let $n\in \N$ and let, for $S_{i,j}$ the Stirling numbers,
 $$L=\delta^n=(x\partial)^n=x^n\partial^n+S_{n,n-1}x^{n-1}\partial^{n-1}+S_{n,2}x^{n-2}\partial^{n-2}+\ldots +S_{n,1}x\partial+S_{n,0}.$$

If we interpret $L$ as a differential operator in $\C\osb x\csb [\partial]$ and solve the equation $Ly=0$ in $\C\orb x\crb [z]$, we obtain a full basis of solutions $ \{1, z, \ldots,\allowbreak z^{n-1} \} $ over $\C$. In characteristic $p$ the field of constants clearly contains $\kk\orb x^p\crb [z^p]$. So for $n>p$ the set $ \{1, z, \ldots,\allowbreak z^{n-1} \}$ cannot be a full basis of solutions, as $1$ and $z^p$ are linearly dependent over the field of constants. In some sense this boils down to the fact that a primitive of $z^{p-1}$ cannot be expressed in terms of $z^p$, in fact, $z^{p-1}$ does not have a primitive in $\kk\orb x\crb[z]$ at all.\\

(ii) Consider the Euler operator  
\[L=x^2\partial^2+x\partial+2.\]
Solving $Ly=0$ in characteristic $0$ we notice that the local exponents are given by $\sqrt{2}$ and $-\sqrt{2}$ and a basis of solutions is given by the ``functions'' $x^{-\sqrt{2}}$ and $x^{\sqrt{2}}$, which are defined in sectors without a branch cut of the logarithm around $0$. In $\F_7\orb x\crb $ the monomials $x^3$ and $x^4$ are solutions of the equation. However, in $\F_5$ no square root of $2$ exists and thus it is impossible to solve the Euler equation $Ly=0$ in $\F_5\orb x\crb[z]$.
\end{ex}

In order to resolve this issue in positive characteristic, we will construct in the next section a differential extension of $\kk\orb x\crb(z)$ which will contain a full basis of solutions for any linear differential operator with regular singularity at $0$. Regularity is again needed to ensure the existence of as many local exponents as the order of the differential equation indicates. The extension will overcome the two aforementioned difficulties: the lack of primitives of certain elements and the lack of solutions to Euler equations.

%---------------------------------
%             EULER PRIMITIVE CLOSURE
%---------------------------------

\subsection{The Euler-primitive closure $\mathcal{R}$ of $\kk\orb x\crb$} \label{subsec:closurep}

For each $\rho\in \kk$ let $t^\rho$ be a symbol. It will play the role of the monomial $x^\rho$ from before; if $\rho$ lies in the prime field of $\kk$ we may substitute $x$ for $t$ to recover the classical setting. We will call $\rho$ the exponent of $t$ in $t^\rho$. Further, let
$$\mathcal{R}= \bigoplus_{\rho \in \kk}t^\rho \kk(z_1, z_2, \ldots )\orb x\crb$$
be the direct sum of Laurent series in $x$ with coefficients in the field of rational functions over $\kk$ in countably many variables $z_i$, multiplied with the monomials $t^\rho$. We will simply write $\kk(z)$ and $\kk(z^p)$ instead of $\kk(z_1,z_2, \ldots)$ and $\kk(z_1^p,z_2^p, \ldots)$.

We consider $\mathcal{R}$ as a ring with respect to the obvious addition and the multiplication given by $$(t^\rho f)\cdot (t^{\sigma}g)=t^{\rho+\sigma}(f\cdot g)$$ for $\rho, \sigma\in \kk$, $f,g\in \kk[\z]\osb x\csb$. In other words, we form the group algebra of the additive group of $\kk$ over $\K(z)((x))$. We will write $t^0=1$ and accordingly have $(t^\rho)^p =t^{\rho p}=1$ and $\mathcal{R}^p=\kk(\z^p)\orb x^p \crb$.

Equip $\mathcal{R}$ with the derivation $\6=\6_R$ satisfying:
\begin{gather*}
\6 x = 1,\\
\6 t = t \frac{1}{x},\\
\6 t^\rho=\rho t^\rho \frac{1}{x},\\
\6z_1=\frac{1}{x}, \quad \6z_2=\frac{1}{x}\frac{1}{z_1},\quad \6 z_k=\frac{1}{x}\frac{1}{z_1\cdots z_{k-1}}=\frac{\6 z_{k-1}}{z_{k-1}}, \  k\geq 1.
\end{gather*}
This turns $\mathcal{R}$ into a differential ring.

The action of $\partial$ on $z_i$ is chosen to mimic the usual derivation of the $i$-fold composition $\log(\ldots (\log x )\ldots)$ of the complex logarithm with itself.
Indeed we have, writing $\log^{[i]}$ for the $i$-fold repetition of the logarithm
\[
\left(\log(x)^{[i]}\right)^\prime=\frac{1}{x\cdot \log x  \cdot \log(\log x )\cdots  \log(x)^{[i-1]}}.
\]

Similar constructions with iterated logarithms in positive characteristic were already considered  by Dwork \cite{Dwork90}, p.~752.

\begin{rem}
(i) The ring $\mathcal{R}$ is not an integral domain. Indeed, $(1+t+\ldots +t^{p-1})(1-t)=1-t^p=0$. Thus we are not able to form its quotient field and use the machinery of differential fields, as e.g.~Wronski's Lemma and the concept of a basis of solutions. Still, in the course of the next sections, we will be able to provide a precise description of the solutions of a differential equation $Ly=0$ in $\mathcal{R}$.

(ii) The derivation $\6$ leaves the summands of the direct sum $\mathcal{R}$ invariant, i.e., one has $\6\left(t^\rho \kk(\z)\orb x\crb\right)\subseteq t^\rho \kk(\z)\orb x\crb$. This is the reason for not simply defining $\partial t^\rho = \rho t^{\rho-1}$ but rather $\6 t^\rho=\rho t^\rho \frac{1}{x}$.

(iii) Note that the elements of $\mathcal{R}$ may have unbounded degree in each of the variables $z_i$, only the coefficient of a given power of $x$ has finite degree. This differs from the situation in characteristic $0$ where the exponent of the logarithm in a solution of the equation $Ly=0$ is bounded for each differential operator.

(iv) The doubly iterated logarithm $\log(\log x)$ of characteristic $0$ does not satisfy any homogeneous linear differential equation with holomorphic coefficients, but only the non-linear equation
$$
xy''+y'+x(y')^2=0.
$$
Alternatively, it satisfies the inhomogeneous equation
$$
x\log x y'=1
$$
in which the logarithm appears as a coefficient. In characteristic $p$ this reads as 
$$x z_1z_2'=1.$$
 
\end{rem}

 For elements of $\mathcal{R}$ the exponents of $x$ are integers, while the exponents of $t$ are elements of the field $\kk$ (formally, $t^\rho$ for $\rho\in \kk$ is just a symbol). However, we will see that the exponents of $x$ and $t$  interact in a certain way. We will use the following convention: In case that $\rho$ is in the prime field $\F_p$ of $\kk$, we may write $x^{\rho_\Z}$ for $x^\rho$ where $\rho_\Z\in \{0,1,\ldots, p-1\}$ is a representative of $\rho$. Conversely we may write $t^{k_\kk}$ for $t^k$ for some $k\in \Z$, where $k_\kk\in \F_p\subseteq \kk$ is the reduction of $k$ modulo $p$.

Before we proceed, we will determine the constants of $\mathcal{R}$. Denote by $\kk(\z^p)$ the subfield $\kk(z_1^p, z_2^p,\ldots)$ of $\kk(\z)$. A simple computation shows that monomials of the form $t^\rho z_i^p x^{mp-\rho}$, for any $\rho$ in the prime field $\F_p$ of $\kk$ and any $m\in \Z$, are annihilated by $\6$. And, actually, these monomials already yield the entire field of constants, namely,

\begin{prop}\label{prop:const}
The ring of constants of $(\mathcal{R},\6)$ is $$\mathcal{C}:=\bigoplus_ {\rho \in \F_p}t^\rho x^{p-\rho}\kk(\z^p)\orb x^p\crb,$$ where $\F_p$ denotes the prime field of $\kk$. Moreover, $\mathcal{C}$ is a field.
\end{prop}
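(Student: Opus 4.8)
The plan is to show the two inclusions $\CC\subseteq\Ker(\6)$ and $\Ker(\6)\subseteq\CC$, and then to verify that $\CC$ is a field. The first inclusion is the routine computation already indicated before the statement: for $\rho\in\F_p$ and $m\in\Z$ one checks $\6(t^\rho x^{mp-\rho})=(\rho+\overline{mp-\rho})t^\rho x^{mp-\rho-1}=0$ since $\overline{mp-\rho}=-\rho$ in $\F_p$, and since $\6$ kills $\kk(\z^p)$ and $x^p$ (because $\6 x^p=px^{p-1}=0$ and $\6 z_i^p=pz_i^{p-1}\6 z_i=0$), the Leibniz rule gives $\6(t^\rho x^{p-\rho}\kk(\z^p)\orb x^p\crb)=0$; summing over $\rho\in\F_p$ yields $\CC\subseteq\Ker(\6)$.

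For the reverse inclusion, since $\6$ preserves each summand $t^\rho\kk(\z)\orb x\crb$ of the direct sum $\RR$, it suffices to analyze a single $\rho\in\kk$. First I would argue that if $t^\rho f$ is a constant with $f\neq 0$, then $\rho\in\F_p$: writing $f=\sum_{k\gg-\infty}c_k x^k$ with $c_k\in\kk(\z)$, the lowest-order term of $\6(t^\rho f)$ forces, via $\6(t^\rho c_k x^k)=(\rho+\overline k)t^\rho c_k x^{k-1}+t^\rho x^{k-1}(x\6 c_k)$, a cancellation that can only occur between consecutive powers of $x$; tracking this shows every nonzero $c_k$ sits in exponent $k\equiv-\rho$, so $\rho$ must be a residue, i.e.\ $\rho\in\F_p$. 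Once $\rho\in\F_p$, I substitute $x^{\rho_\Z}$ for $t^\rho$ and reduce to determining the constants of the plain differential ring $\kk(\z)\orb x\crb$ (with the given rules $\6 z_1=1/x$, $\6 z_k=\6 z_{k-1}/z_{k-1}$), up to the monomial twist $x^{p-\rho}$. So the crux is: the constants of $\kk(\z)\orb x\crb$ are exactly $\kk(\z^p)\orb x^p\crb$. The inclusion $\supseteq$ is clear as above. For $\subseteq$, take $g\in\kk(\z)\orb x\crb$ with $\6 g=0$; expanding $g=\sum_k c_k(\z)x^k$ and using $\6 g=\sum_k\big(kc_k+x\6 c_k\big)x^{k-1}$ where $x\6 c_k=\sum_i(\partial c_k/\partial z_i)(z_1\cdots z_{i-1})^{-1}$, one shows by induction on $k$ (reading off coefficients of each power of $x$) that $c_k=0$ unless $p\mid k$ and that the surviving $c_{pm}$ must be constants of $\kk(\z)$ for the internal derivation $\sum_i(z_1\cdots z_{i-1})^{-1}\partial_{z_i}$; a further elementary computation identifies the constants of $\kk(\z)$ under this derivation as $\kk(\z^p)$. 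Assembling, $\Ker(\6)\subseteq\CC$.

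Finally, to see $\CC$ is a field: it is a subring of $\RR$ by the first part, and it is actually contained in a field. Concretely, $\CC$ sits inside $\kk(\z^p)\big(t,x^{1/p}\big)\orb x\crb$-type constructions, but more cleanly one observes that $\CC\isom\bigoplus_{\rho\in\F_p}t^\rho x^{-\rho}\cdot\big(x^p\kk(\z^p)\orb x^p\crb\big)$ is isomorphic, as a ring, to the group algebra $\kk(\z^p)\orb x^p\crb[\,\Z/p\,]$ twisted so as to become the field $\kk(\z^p)\orb x^p\crb[u]/(u^p-x^p)=\kk(\z^p)\orb x^p\crb[x]$ where $u=tx^{-1}$ satisfies $u^p=t^px^{-p}=x^{-p}$; since $\kk(\z^p)\orb x^p\crb$ is not a field one instead passes to its fraction field $\kk(\z^p)\orb x^p\crb$ being a domain, hence $\CC$ embeds in $\mathrm{Frac}(\kk(\z^p)\orb x^p\crb)[x]$, and as $x$ is integral of degree $p$ over that fraction field (via $x^p$), $\CC$ is a finite-dimensional algebra over a field with no zero divisors, hence a field. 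I expect this last point — correctly identifying the ring structure of $\CC$ and exhibiting it as a field despite $\RR$ itself being far from a domain — to be the main obstacle; the constant-computation in $\kk(\z)\orb x\crb$ is bookkeeping, but making the field claim airtight requires the twisted-group-algebra identification (or, equivalently, exhibiting explicit inverses in $\CC$).
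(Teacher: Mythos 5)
Your determination of the constants follows essentially the paper's own route: restrict to a single summand $t^\rho\kk(\z)\orb x\crb$ of the direct sum, observe that $\6$ is homogeneous of degree $-1$ in the $x$-grading so that the condition $\6(t^\rho f)=0$ decouples degree by degree (there is in fact no ``cancellation between consecutive powers of $x$'' to track --- each $x$-degree must vanish on its own), and then extract $\rho+\overline k=0$ together with $\6$-constancy of the coefficient in $\kk(\z)$. The one step you leave implicit is the actual mechanism behind both conclusions, and it is the crux of the paper's proof: after clearing denominators (replace $g_1/g_2$ by $g_1g_2^{p-1}/g_2^p$, with $g_2^p$ already a constant, so that the coefficient may be assumed polynomial in $\z$), one compares coefficients of a monomial $\z^\alpha$ maximal for the componentwise order; since $x\6$ strictly lowers that order, the coefficient of $\z^\alpha$ in $(\rho+\overline k)h+x\6h$ is $(\rho+\overline k)$ times the leading coefficient, forcing $\rho+\overline k=0$, and then $x\6h=0$ gives $h\in\kk(\z^p)$. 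Your ``tracking this shows'' and ``one shows by induction on $k$'' stand in for exactly this argument; with it supplied, this half matches the paper.

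The field claim is where the write-up goes astray. First, $\kk(\z^p)\orb x^p\crb$ \emph{is} a field: $\orb\,\cdot\,\crb$ denotes Laurent series, and Laurent series over a field form a field; your sentence ``since $\kk(\z^p)\orb x^p\crb$ is not a field one instead passes to its fraction field'' rests on a false premise and is circular as written. Second, the relation satisfied by $u=tx^{-1}$ is $u^p=t^px^{-p}=x^{-p}$, not $u^p=x^p$. Third, and most substantively, the principle ``a finite-dimensional algebra over a field with no zero divisors is a field'' requires you to \emph{prove} the absence of zero divisors: $F[Y]/(Y^p-a)$ has nilpotents whenever $a$ is a $p$-th power in $F$, since then $Y^p-a=(Y-b)^p$. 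So you must check that $x^{-p}$ is not a $p$-th power in $\kk(\z^p)\orb x^p\crb$ (true, because $p$-th powers there lie in $\kk(\z^{p^2})\orb x^{p^2}\crb$ and $-p\not\equiv 0\bmod p^2$), or equivalently that $1,u,\ldots,u^{p-1}$ are independent over $\kk(\z^p)\orb x^p\crb$ because they occupy distinct summands of $\RR$. With that supplement your identification $\CC\cong\kk(\z^p)\orb x^p\crb[Y]/(Y^p-x^{-p})$ does yield a field, but the paper's argument is shorter and avoids the structural detour: for $0\ne f=\sum_\rho t^\rho x^{p-\rho}f_\rho$ one computes $f^p=\sum_\rho x^{p^2-p\rho}f_\rho^p$, which lies in the Laurent series field $\kk(\z^p)\orb x^p\crb$ and is nonzero because the exponents $p^2-p\rho$ are pairwise distinct modulo $p^2$; hence $f^{-1}=f^{p-1}(f^p)^{-1}\in\CC$.
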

\begin{proof}
Let $f \in \bigoplus_{\rho \in \kk} t^\rho \kk(\z)\orb x\crb$ and assume that 
$$
\6 f=0.
$$
Taking derivatives in $\mathcal{R}$ preserves the summands of the direct sum, so it suffices to find constants of the form $t^\rho h$ for some $\rho \in \kk$ and $h\in \kk(\z)\orb x\crb.$

Fix some $\rho \in \kk$. As for all $k\in \Z$ the derivation $\6$ maps $t^\rho \kk(\z)x^k$ into $t^\rho \kk(\z)x^{k-1}$ by definition, it further suffices to find constants of the form $t^\rho h x^k$, where $h\in \kk(\z)$. Therefore we are reduced to search for elements $t^\rho h x^k$ of $\mathcal{R}$ with $\6(t^\rho h x^k)=0.$ Write $h=g_1/g_2$ for $g_1, g_2\in \kk[\z]$. Then  $\6(t^\rho h x^k)=0$ is equivalent to $\6(t^\rho g_1g_2^{p-1} x^k)=0$, as $g_2^p$ is a constant. So without loss of generality we may assume that $h\in \kk[\z]$ is a polynomial. We expand:
\begin{equation} \label{eq:vanconst}
0=\6(t^\rho h x^k)=t^\rho((\6 h) x + (k+\rho) h)x^{k-1}.
\end{equation}

Let $l$ be minimal such that $h\in\kk[z_1,\ldots, z_l]$. Consider one monomial $\z^\alpha=z_1^{\alpha_1}\cdots z_l^{\alpha_l}$ of $h$, whose 
exponent $\alpha$ is maximal among the monomials of $h$ with respect to the component-wise ordering of $\N^l$. Taking the derivative $\partial$ of a monomial in $\z$ decreases the exponents of at least one of the $z_i$ and does not increase the other. It therefore yields a sum of smaller monomials with respect to the chosen ordering.

Thus, in $x\6h$ the coefficient of $z^\alpha$ vanishes by the maximality of the exponent. If we compare coefficients of $t^\rho x^{k-1}z^\alpha$ in Equation (\ref{eq:vanconst}) we get $k+\rho=0$. So it follows that $\rho\in \F_p$ and that $k\equiv \rho \mod p$. Moreover, we see from Equation (\ref{eq:vanconst}) that $\6 h=0$. This is clearly equivalent to $h\in \kk[\z ^p]$. Together with the reductions from above this proves that the ring of constants of $\mathcal{R}$ is indeed
$$
\bigoplus_ {\rho \in \F_p}t^\rho x^{p-\rho}\kk(\z^p)\orb x^p\crb.
$$

Finally, we show that $\mathcal{C}$ is a field. Let
$$
f=\sum_{\rho \in \F_p}t^\rho x^{p-\rho} f_\rho\in \mathcal{C},
$$
where $f_\rho\in \kk( \z^p)\orb x^p\crb$. Then we have
$$f^p=\sum_{\rho \in \F_p}t^{p\rho} x^{p^2-p\rho} f_\rho^p=\sum_{\rho \in \F_p} x^{p^2-p\rho} f_\rho^p\in \kk(\z^p)\orb x^p\crb,$$
where $f_\rho^p\in \kk( \z^{p^2})\orb x^{p^2}\crb$. The element $f^p$ vanishes precisely if $f_\rho$ vanishes for all $\rho \in \F_p$, as the exponents of $x$ in each of the summands are from a different residue class modulo $p^2$. Thus, $f^p$ is a unit for all $f\neq 0$ and we see that $(f^{p-1})(f^p)^{-1}$ is an inverse to $f$. 
\end{proof}

{\bf Example~\ref{ex:zi} (cont.)} Let us come back to Example \ref{ex:zi} (i) with $k=p+1$ and the operator $L=(x\partial)^{p+1} \in \kk[x][\partial]$. In $\mathcal{R}$ we have
$$
(x\6)^{p+1}(z_1^pz_2)=0.
$$
So we have found another solution to the equation $Ly=0$. This completes a basis of a $p+1-$ dimensional vector space of solutions over the constants of $\mathcal{R}$, namely $\{1,z_1^1, z_1^2,\ldots ,z_1^{p-1}, z_1^{p}z_2\}$, as those elements are $\mathcal{C}$-linearly independent.\\

For the Euler operator $L=x^2\partial^2+x\partial+2\in \F_5[x][\partial]$ from Example \ref{ex:zi} (ii) we can also find a basis of solutions in $\mathcal{R}$ over $\mathcal{C}$. It is given by the monomials $t^{\omega}$ and  $t^{-\omega}$, where $\omega\in \F_{25}$ is a square root of $2$.\\

From what we have seen it is reasonable to call $\mathcal{R}$ the {\it Euler-primitive closure} of $\kk\orb x\crb$.
\subsection{Extensions of Euler operators to the ring $\mathcal{R}$}\label{subsec:eulerp}
Our goal now is to prove that Euler operators admit ``enough'' solutions in  $\mathcal{R}= \bigoplus_{\rho \in \kk}t^\rho \kk(\z)\orb x\crb$ and then to compute these solutions. For this we first investigate how Euler operators act on monomials $t^\rho z^\alpha x^k$, see Lemma \ref{lem:66mon}. For a multi-index $\alpha \in \Z^{(\N)}=\{(\alpha_i)_{i\in \N}\mid \alpha_i=0 \text{ for almost all } i\}$ we write $\z^\alpha$ for $z_1^{\alpha_1}\cdots z_n^{\alpha_n}$, if $\alpha_j=0$ for $j>n$. We define a partial ordering on $\Z^{(\N)}$ by $\beta \prec_e \alpha$ if
$$e(\beta):=\overline{\beta}_1+p\overline{\beta}_2+p^2\overline{\beta}_3+\ldots<\overline{\alpha}_1+p\overline{\alpha}_2+p^2\overline{\alpha}_3+\ldots=:e(\alpha),
$$
where $\overline{\beta}_i, \overline{\alpha}_i\in\{0,1,\ldots, p-1\}$ are chosen such that $\beta_i\equiv \overline{\beta}_i\mod p$ respectively $\alpha_i\equiv \overline{\alpha}_i\mod p$.
In other words $\prec_e$ is induced by the inverse lexicographic ordering on $\F_p^{(\N)}$ via the element-wise reduction modulo $p$ of elements of $\Z^{(\N)}$.

We also write $\z^\beta\prec_e \z^\alpha$ if $\beta\prec_e \alpha$.

\begin{lem}\label{lem:order}
Let $\alpha \in \Z^{(\N)}$. Then $(x\partial) z^\alpha$ is a sum of monomials that are smaller than $z^\alpha$ with respect to $\prec_e$ and there is exactly one summand $z^\gamma$ with $e(\gamma)=e(\alpha)-1$. In particular, $e(\alpha)$ is the minimal number $j$ such that $(x\partial)^j(z^\alpha)=0$.
\end{lem}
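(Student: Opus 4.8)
The plan is to compute $(x\partial)(z^\alpha)$ explicitly via the Leibniz rule and read off the effect on the weight $e(\cdot)$. First I would write $z^\alpha = z_1^{\alpha_1}\cdots z_n^{\alpha_n}$ and apply $x\partial$; since $x\partial z_k = x \cdot \frac{1}{x z_1\cdots z_{k-1}} = \frac{1}{z_1\cdots z_{k-1}}$, the Leibniz rule gives
\[
(x\partial)(z^\alpha) = \sum_{k=1}^n \alpha_k\, z_1^{\alpha_1}\cdots z_k^{\alpha_k - 1}\cdots z_n^{\alpha_n}\cdot \frac{1}{z_1\cdots z_{k-1}} = \sum_{k=1}^n \alpha_k\, z^{\alpha - e_k - (e_1 + \cdots + e_{k-1})},
\]
where $e_j$ denotes the $j$-th standard basis vector of $\Z^{(\N)}$. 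So the $k$-th summand has exponent vector $\gamma^{(k)} = \alpha - e_k - \sum_{j<k} e_j$, and I need to compare $e(\gamma^{(k)})$ with $e(\alpha)$.

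The key computation is the behaviour of $e$ under subtracting $e_k + \sum_{j<k} e_j$. Recall $e(\beta) = \sum_i \overline{\beta_i}\, p^{i-1}$ where $\overline{\beta_i} \in \{0,\dots,p-1\}$ is the residue of $\beta_i$. Subtracting $1$ from the $k$-th coordinate and the first $k-1$ coordinates is, modulo $p$ in each slot, exactly the operation of subtracting $1 + p + \cdots + p^{k-1}$ from the "base-$p$ number" $e(\alpha)$ — but only when the relevant digits behave like an honest base-$p$ subtraction. I would argue as follows: let $k$ be the smallest index with $\overline{\alpha_k} \neq 0$ (assuming $\alpha \not\equiv 0$, i.e. $e(\alpha) \geq 1$; the case $e(\alpha)=0$ is handled separately since then every summand has coefficient $\alpha_k \equiv 0$ and also $z^\alpha = z^{p\alpha'}$ forces... wait, more carefully: if $e(\alpha)=0$ then all $\overline{\alpha_i}=0$, so every coefficient $\alpha_k$ in the sum is... no, $\alpha_k$ need not be $\equiv 0$). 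Let me restructure: for each $k$, if $\overline{\alpha_k} = 0$ then the coefficient $\alpha_k$ is a multiple of $p$, but that does not kill the term in characteristic $p$ unless $\alpha_k \equiv 0 \bmod p$ — which is exactly $\overline{\alpha_k}=0$. Good, so in characteristic $p$ the only surviving terms are those with $\overline{\alpha_k} \neq 0$.

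So restrict to indices $k$ with $\overline{\alpha_k} \neq 0$. For such $k$, subtracting $1$ from $\alpha_k$ lowers $\overline{\alpha_k}$ by $1$ (no borrow needed at position $k$), contributing $-p^{k-1}$ to $e$; and subtracting $1$ from each $\alpha_j$ with $j<k$ changes $\overline{\alpha_j}$ from its value $v_j$ to $v_j - 1 \bmod p$, i.e. contributes $+(p-1)p^{j-1}$ when $v_j = 0$ and $-p^{j-1}$ when $v_j \neq 0$. Summing, $e(\gamma^{(k)}) - e(\alpha) = -p^{k-1} + \sum_{j<k}\bigl(\text{either } (p-1)p^{j-1} \text{ or } -p^{j-1}\bigr)$. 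The worst (largest) case is all $v_j = 0$ for $j<k$, giving $-p^{k-1} + (p-1)(1 + p + \cdots + p^{k-2}) = -p^{k-1} + (p^{k-1} - 1) = -1$. In general this quantity is at most $-1$ and one checks it is $\geq -(1+p+\cdots+p^{k-1})$, so always $e(\gamma^{(k)}) < e(\alpha)$, i.e. every surviving summand is $\prec_e z^\alpha$. This establishes the first claim.

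For the "exactly one summand with $e(\gamma) = e(\alpha) - 1$" claim: from the computation above, $e(\gamma^{(k)}) = e(\alpha) - 1$ happens precisely when $k$ is the smallest index with $\overline{\alpha_k}\neq 0$ — because for that $k$ all lower digits $v_j$ ($j<k$) are zero, yielding the maximal value $-1$; for any larger surviving index $k'$ there is at least one nonzero digit below it (namely at position $k$), which strictly decreases the increment below $-1$... wait, I should double-check: a nonzero $v_j$ contributes $-p^{j-1}$ instead of $+(p-1)p^{j-1}$, a decrease of $p\cdot p^{j-1} = p^j$, so indeed $e(\gamma^{(k')}) \leq -1 - p^j < -1$. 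Hence the minimal such $k$ is the unique index achieving the difference $-1$, and its coefficient $\alpha_k$ is a unit in $\kk$ (being $\not\equiv 0$), so the monomial $z^{\gamma^{(k)}}$ genuinely appears. The final statement — that $e(\alpha)$ is the minimal $j$ with $(x\partial)^j z^\alpha = 0$ — then follows by induction on $e(\alpha)$: applying $x\partial$ strictly decreases the maximal $e$-value among the monomials present (by the first claim), and actually decreases it by exactly $1$ because the unique top term at level $e(\alpha)-1$ cannot be cancelled (any other summand sits strictly below level $e(\alpha)-1$), so after $e(\alpha)$ applications one reaches level $0$ monomials (all $\overline{\alpha_i}=0$, hence $z^\alpha$ a $p$-th power pattern annihilated by $x\partial$) and one more application gives $0$; conversely it is nonzero before that.

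The main obstacle I anticipate is the careful bookkeeping of the "borrow" phenomenon in the base-$p$ digit subtraction — i.e. verifying cleanly that subtracting $1+p+\cdots+p^{k-1}$ (coordinatewise mod $p$, without interaction between coordinates) lowers $e$ by exactly $1$ when $k$ is the least index with nonzero digit, and by strictly more (in fact $< -1$) otherwise. Phrasing this as "$e(\alpha)$ written in base $p$, and $x\partial$ performs the decrement $e(\alpha) \mapsto e(\alpha)-1$ on the leading term" makes it transparent, but one must be careful that the digits $\overline{\alpha_i}$ are genuinely a base-$p$ representation (they are, since each lies in $\{0,\dots,p-1\}$) and that the map $\alpha \mapsto e(\alpha)$, though not injective on $\Z^{(\N)}$, is injective on residue patterns $\F_p^{(\N)}$ — which is all that matters for identifying the unique leading monomial up to the irrelevant $p$-th-power ambiguity in its exponents.
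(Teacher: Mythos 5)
Your proof is correct and follows essentially the same route as the paper's: expand $(x\partial)z^\alpha$ by the Leibniz rule, observe that only the indices $k$ with $\overline{\alpha_k}\neq 0$ contribute, and carry out the base-$p$ digit comparison showing the increment of $e$ is exactly $-1$ for the least such index and at most $-1-p^{j}$ otherwise --- this is precisely the ``fast computation'' the paper leaves implicit. One remark: your own bookkeeping at the end correctly yields $(x\partial)^{e(\alpha)}z^\alpha\neq 0$ (a single surviving level-$0$ monomial) and $(x\partial)^{e(\alpha)+1}z^\alpha=0$, so the minimal $j$ with $(x\partial)^{j}z^\alpha=0$ is $e(\alpha)+1$, not $e(\alpha)$; the final sentence of the lemma is off by one as stated (already for $z^\alpha=1$, where $e(\alpha)=0$ but $(x\partial)^0(1)=1\neq 0$), and it is your version that matches the way the lemma is used later (e.g.\ the condition $e(\alpha)<\ell$ in the proof of Proposition \ref{prop:Euler}). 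You should flag this discrepancy explicitly rather than asserting that your (correct) computation establishes the literal claim.
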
 
\begin{proof}
Let $\alpha=(\alpha_1,\alpha_2,\ldots)$. We compute:
$$\6 \z^\alpha=\frac{1}{x}\sum_{i=1}^t\alpha_i\underbrace{z_1^{\alpha_1-1}z_2^{\alpha_2-1}\cdots z_i^{\alpha_i-1}z_{i+1}^{\alpha_{i+1}}\cdots z_t^{\alpha_t}}_{=: \z^{\gamma(i)}}.
$$
If $\alpha_i\not \equiv 0\mod p$, then clearly $\gamma(i)\prec_e \alpha$, otherwise its coefficient in $(x\partial) \z^\alpha$ vanishes. A fast computation shows that if $j$ is the least index, such that $\alpha_j\neq 0$, then $e(\gamma(j))=e(\alpha)-1$. Moreover, $e(\gamma(j))<e(\alpha)-1$ for all other $j$. This proves in particular that $e(\alpha)$ is the minimal number $j$ such that $(x\6)^jz^\alpha=0$. 
\end{proof}

Let $s$ be a variable and $k\in \N$. We define the {\it $j$-th Hasse derivative} or {\it divided derivative} of $s^k$ by $(s^k)^{[j]}=\binom{k}{j}s^{k-j}$; extend it linearly to $\kk[s]$ \cite{Jeong11}. We will apply it below to the indicial polynomial $\chi_L$ of an operator $L$, viewed as a polynomial in the variable $s$. The next three lemmata are, as in the case of characteristic zero, inspired by Frobenius' ``differentiation with respect to local exponents'' \cite{Frobenius73}. See Lemmata \ref{lem:substitutionz} and \ref{lem:kernelL0} for the corresponding results in characteristic zero.
\begin{lem}
Let $k,l\in \N$. Then we have
$$
(s^{\ul k})^{[l]}+(s^{\ul k})^{[l+1]}(s-k)=(s^{\ul{ k+1}})^{[l+1]}.
$$\end{lem}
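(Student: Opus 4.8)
The statement is an identity of polynomials in the single variable $s$, so the plan is to prove it by a direct computation with binomial coefficients, using only the definitions $s^{\ul k}=s(s-1)\cdots(s-k+1)$ and $(s^k)^{[j]}=\binom{k}{j}s^{k-j}$ extended linearly. First I would record the Pochhammer recursion $s^{\ul{k+1}}=s^{\ul k}\cdot(s-k)$, which is immediate from the definition of the falling factorial. The goal then reduces to showing that the $(l+1)$-st Hasse derivative of $s^{\ul k}\cdot(s-k)$ equals $(s^{\ul k})^{[l]}+(s^{\ul k})^{[l+1]}(s-k)$.

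The key step is a Leibniz-type rule for the Hasse derivative applied to a product with a linear factor. Concretely, for any polynomial $f$ one has $(f\cdot(s-k))^{[m]}=f^{[m-1]}+f^{[m]}\cdot(s-k)$; this is the Hasse-derivative analogue of $(f\cdot g)'=f'g+fg'$ truncated to a degree-one second factor, and it follows by linearity once it is checked on monomials $f=s^j$, where it amounts to the Pascal-type identity $\binom{j+1}{m}=\binom{j}{m-1}+\binom{j}{m}$ together with bookkeeping of the exponents of $s$. Applying this with $f=s^{\ul k}$ and $m=l+1$ gives exactly
\[
(s^{\ul{k+1}})^{[l+1]}=(s^{\ul k}\cdot(s-k))^{[l+1]}=(s^{\ul k})^{[l]}+(s^{\ul k})^{[l+1]}(s-k),
\]
which is the claimed identity.

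I expect the only mild obstacle to be verifying the product rule on monomials cleanly in characteristic $p$, i.e.\ making sure that the argument uses only the binomial identity $\binom{j+1}{m}=\binom{j}{m-1}+\binom{j}{m}$ (valid over $\Z$, hence over any field after reduction) and never divides by anything. Since the Hasse derivative is defined precisely to avoid such divisions, this causes no trouble, and the whole proof is a short formal manipulation valid over $\N$ (equivalently over $\Z[s]$) and therefore over $\kk[s]$ for any field $\kk$. Alternatively, and perhaps even more briefly, one can expand $(s^{\ul k})^{[l]}=\binom{?}{?}\cdots$ after writing $s^{\ul k}$ in the basis $\{s^j\}$, but the Leibniz-rule route avoids choosing such an expansion and keeps the computation transparent.
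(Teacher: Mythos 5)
Your proof is correct. The paper actually states this lemma without any proof, so there is nothing to compare against; your argument --- reducing via the Pochhammer recursion $s^{\ul{k+1}}=s^{\ul k}\cdot(s-k)$ to the degree-one case of the Leibniz rule for Hasse derivatives, checked on monomials through the integral Pascal identity $\binom{j+1}{m}=\binom{j}{m-1}+\binom{j}{m}$ --- is the natural one, involves no division, and is valid over $\kk[s]$ in any characteristic, exactly as needed for the later applications to the indicial polynomial.
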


\begin{lem} \label{lem:66mon}
Let $j\in \N, k\in \Z, \alpha \in \Z^{(\N)}$. Then we have
$$
\6^j(t^s x^k \z^\alpha)=t^s x^{k-j} \left((s+k)^{\ul j} \z^\alpha + ((s+k)^{\ul j})^{[1]}x\6\z^\alpha +\ldots + ((s+k)^{\ul j})^{[j]}(x\6)^j \z^\alpha\right).
$$
\end{lem}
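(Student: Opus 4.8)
The plan is to prove Lemma~\ref{lem:66mon} by induction on $j$, the number of applications of $\6$. The base case $j=0$ is the trivial identity $t^sx^k\z^\alpha = t^sx^k\z^\alpha$, since $(s+k)^{\ul 0}=1$ and all higher terms involve $((s+k)^{\ul 0})^{[l]}=0$ for $l\geq 1$. For the inductive step I would assume the formula holds for $j$ and apply one more $\6$, using the product (Leibniz) rule together with the derivation rules $\6 x = 1$, $\6 t^s = s\,t^s x^{-1}$, and $\6\z^\alpha = x^{-1}(x\6)\z^\alpha$ as defined in Section~\ref{subsec:closurep}.

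Concretely, write the $j$-th derivative as $\6^j(t^sx^k\z^\alpha) = t^s x^{k-j}\sum_{l=0}^{j} ((s+k)^{\ul j})^{[l]} (x\6)^l\z^\alpha$, and apply $\6$ to each summand $t^s x^{k-j} ((s+k)^{\ul j})^{[l]} (x\6)^l\z^\alpha$. Since $((s+k)^{\ul j})^{[l]}$ is a scalar (a polynomial in $s$, not involving $x$ or the $z_i$), the derivation hits only $t^s x^{k-j}$ and $(x\6)^l\z^\alpha$. The factor $t^s x^{k-j}$ contributes $(s+k-j)\,t^s x^{k-j-1}$, while $\6\bigl((x\6)^l\z^\alpha\bigr) = x^{-1}(x\6)^{l+1}\z^\alpha$. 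Collecting, the $\6^{j+1}$ expression becomes
\[
t^s x^{k-j-1}\sum_{l=0}^{j} ((s+k)^{\ul j})^{[l]}\Bigl[(s+k-j)(x\6)^l\z^\alpha + (x\6)^{l+1}\z^\alpha\Bigr].
\]
Reindexing the second piece (replacing $l$ by $l-1$) and merging, the coefficient of $(x\6)^l\z^\alpha$ is $((s+k)^{\ul j})^{[l]}(s+k-j) + ((s+k)^{\ul j})^{[l-1]}$. This is exactly where the preceding (unnamed) Lemma applies: with $s$ replaced by $s+k$ and $l$ replaced by $l-1$, the identity $(\tau^{\ul j})^{[l-1]} + (\tau^{\ul j})^{[l]}(\tau - j) = (\tau^{\ul{j+1}})^{[l]}$ shows this coefficient equals $((s+k)^{\ul{j+1}})^{[l]}$, which closes the induction.

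I do not expect a genuine obstacle here — the proof is a routine bookkeeping induction. The one point requiring a little care is matching indices when invoking the previous Lemma: one must note that $(x\6)^l\z^\alpha$ vanishes for $l$ large enough (by Lemma~\ref{lem:order}, once $l > e(\alpha)$), so the sums are finite and the reindexing introduces no spurious boundary terms; and one must correctly identify $\tau = s+k$ and the shift by $j$ so that $\tau - j = s+k-j$ matches the factor coming from $\6(t^s x^{k-j})$. Everything else is direct substitution of the derivation rules defining $\mathcal{R}$.
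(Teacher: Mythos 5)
Your proof is correct and follows essentially the same route as the paper: induction on $j$, applying $\6$ via the Leibniz rule so that the factor $t^sx^{k-j}$ contributes $(s+k-j)$ and each $(x\6)^l\z^\alpha$ is promoted to $(x\6)^{l+1}\z^\alpha$, then closing the induction with the preceding identity $(\tau^{\ul j})^{[l-1]}+(\tau^{\ul j})^{[l]}(\tau-j)=(\tau^{\ul{j+1}})^{[l]}$ at $\tau=s+k$. The paper's argument is the same bookkeeping computation, so no further comment is needed.
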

\begin{proof}
The proof uses induction on $j$. For $j=0$ the claim is obvious. Assume now the formula holds for some $j\geq 0$. Applying $\6$ yields 
\begin{align*}
&\6^{j+1}(t^s x^k \z^\alpha)=\6\left(t^s x^{k-j} \left((s+k)^{\ul j} \z^\alpha + ((s+k)^{\ul j})^{[1]}x^{1}\6\z^\alpha +\ldots + ((s+i)^{\ul j})^{[j]}(x\6)^j \z^\alpha\right)\right)\\
&\ =t^s x^{k-j-1}(s+k-j) \left((s+k)^{\ul j} \z^\alpha + ((s+k)^{\ul j})^{[1]}x\6\z^\alpha +\ldots + ((s+k)^{\ul j})^{[j]}(x\6)^j \z^\alpha\right)+ \\
&\qquad + t^s x^{k-j}\left((s+k)^{\ul j} \6 \z^\alpha + ((s+k)^{\ul j})^{[1]}\6(x\6)\z^\alpha+\ldots + ((s+k)^{\ul j})^{[j]}\6(x\6)^j \z^\alpha\right)\\
&\ =t^s x^{k-j-1}\left((s+k-j)(s+k)^{\ul j}+ \left((s+k-j)((s+k)^{\ul j})^{[1]}+(s+k)^{\ul j}\right)x\6\z^\alpha+\ldots \right)\\
&\ =t^s x^{k-j-1} \left((s+k)^{\ul{j+1}} \z^\alpha + ((s+k)^{\ul{j+1}})^{[1]}x\6\z^\alpha +\ldots + ((s+k)^{\ul{j+1}})^{[j+1]}(x\6)^{j+1} \z^\alpha\right),
\end{align*}
where we have used the previous lemma in the last step.
\end{proof}
From this we get:
\begin{lem}  \label{lem:monomials}
Let $L$ be an Euler operator of order $n$ with indicial polynomial $\chi_L$. Then for any $\alpha\in \Z^{(\N)}$, $k\in \Z$ and $\rho \in \kk$ we have
\begin{equation} \label{eq:expansion}
L(t^\rho x^{k}\z^\alpha)=t^{\rho}x^{k}\left(\chi_L(\rho+k) \z^\alpha+\chi_L'(\rho+k)x\6(\z^\alpha)+\ldots + \chi_L^{[n]}(\rho+k)(x\6)^n(\z^\alpha)\right).
\end{equation}
\end{lem}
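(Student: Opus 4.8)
The plan is to reduce Lemma~\ref{lem:monomials} directly to Lemma~\ref{lem:66mon} by the same bookkeeping that produces the indicial polynomial, so almost no new work is needed. First I would write the Euler operator in the form $L=\sum_{j=0}^n p_j x^j\6^j$ (recall $L$ has shift $0$, so only terms with $i=j$ survive in each homogeneous piece, and $p_j\in\kk$ are scalars). Apply $L$ to $t^\rho x^k\z^\alpha$ termwise and insert the formula of Lemma~\ref{lem:66mon} for each $\6^j(t^\rho x^k\z^\alpha)$. Every term comes out with the common factor $t^\rho x^k$ (the powers of $x$ from $x^j$ and from $x^{k-j}$ cancel), so one gets
\[
L(t^\rho x^k\z^\alpha)=t^\rho x^k\sum_{j=0}^n p_j\sum_{l=0}^j\bigl((\rho+k)^{\ul j}\bigr)^{[l]}(x\6)^l\z^\alpha.
\]

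Next I would exchange the order of summation, collecting the coefficient of $(x\6)^l\z^\alpha$ for each fixed $l\in\{0,\dots,n\}$. That coefficient is $\sum_{j=l}^n p_j\bigl((\rho+k)^{\ul j}\bigr)^{[l]}$, which by linearity of the Hasse derivative equals $\bigl(\sum_{j=0}^n p_j s^{\ul j}\bigr)^{[l]}$ evaluated at $s=\rho+k$; but $\sum_{j=0}^n p_j s^{\ul j}$ is exactly the indicial polynomial $\chi_L(s)$ (here I would cite the definition of $\chi_L$ and Remark~\ref{rem:stirling}(i)). Hence the coefficient of $(x\6)^l\z^\alpha$ is $\chi_L^{[l]}(\rho+k)$, and substituting $\6^{[l]}=\chi_L^{[l]}$ into the sum yields precisely Equation~\eqref{eq:expansion}. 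Note the sum over $l$ may formally run to $n$, matching the right-hand side, since $\bigl((\rho+k)^{\ul j}\bigr)^{[l]}=0$ for $l>j$ and $(x\6)^l\z^\alpha$ contributes nothing beyond $e(\alpha)$ by Lemma~\ref{lem:order}, so the stated finite expansion is exact.

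There is essentially no obstacle here: the only point requiring a little care is the interchange of the double sum together with the observation that the Hasse derivative is additive, so that $\sum_j p_j (s^{\ul j})^{[l]}=\bigl(\sum_j p_j s^{\ul j}\bigr)^{[l]}=\chi_L^{[l]}(s)$ — this is what makes the indicial polynomial appear simultaneously at all derivative orders. The underlying identity on falling factorials needed to run Lemma~\ref{lem:66mon} has already been recorded (the lemma just above Lemma~\ref{lem:66mon}), so the proof is a one-line assembly: expand via Lemma~\ref{lem:66mon}, swap sums, recognize $\chi_L^{[l]}(\rho+k)$.
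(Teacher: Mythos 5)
Your proof is correct and is exactly the argument the paper intends: the paper gives no written proof of Lemma~\ref{lem:monomials} beyond the phrase ``From this we get,'' meaning precisely the termwise application of Lemma~\ref{lem:66mon} to $L=\sum_j c_{jj}x^j\6^j$, followed by the interchange of sums and the identification $\sum_j c_{jj}\bigl(s^{\ul j}\bigr)^{[l]}=\chi_L^{[l]}(s)$ via linearity of the Hasse derivative. Your write-up just makes that omitted bookkeeping explicit.
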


For a field $K$ of characteristic $0$ a polynomial $q\in K[s]$ has a $j$-fold root at $\beta\in \overline{K}$ if and only if the first $j-1$ derivatives of $q$ vanish in $\beta$, but the $j$-th derivative does not. This very statement is false in characteristic $p$, but if one replaces derivatives with Hasse derivatives it holds true.

\begin{lem} \label{lem:vanishingHasse}
Let $q\in \kk[s]$ be a polynomial. Then $a$ is an $j$-fold root of $q$ if and only if $q^{[i]}(a)=0$ for $i<j$, but $q^{[j]}(a)\neq 0$.
\end{lem}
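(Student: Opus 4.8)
The plan is to reduce the statement to a local computation at the root $a$ by writing $q$ in terms of the shifted variable $s-a$ and reading off, via the Hasse derivatives, the coefficients of that expansion. The key algebraic fact I would use is that over any commutative ring the Hasse derivatives provide the coefficients of the Taylor expansion: if $q(s)=\sum_{i=0}^d b_i (s-a)^i$, then $q^{[j]}(a)=b_j$ for every $j$. This holds because $q^{[j]}$ is additive, $\left((s-a)^i\right)^{[j]} = \binom{i}{j}(s-a)^{i-j}$, and evaluating at $s=a$ kills every term with $i>j$ while the $i=j$ term contributes $\binom{j}{j}b_j = b_j$; crucially $\binom{j}{j}=1$ regardless of the characteristic, which is exactly why the naive statement with ordinary derivatives fails (there one would get $j!\,b_j$, and $j!$ can vanish mod $p$) but the Hasse-derivative version survives.

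First I would fix $a\in\kk$ and write $q(s)=\sum_{i=0}^d b_i(s-a)^i$ with $b_i\in\kk$; this is possible since $s-a$ is monic. By definition, $a$ is a $j$-fold root of $q$ precisely when $(s-a)^j \mid q$ but $(s-a)^{j+1}\nmid q$ in $\kk[s]$, which — since $\kk[s]$ is a PID and $(s-a)$ is prime — is equivalent to $b_0=b_1=\cdots=b_{j-1}=0$ and $b_j\neq 0$. Then I would invoke the Taylor-coefficient identity $q^{[i]}(a)=b_i$ to translate this condition directly into $q^{[i]}(a)=0$ for $i<j$ and $q^{[j]}(a)\neq 0$, which is the claim. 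Both implications come for free from the equivalence, so there is no need to argue the two directions separately.

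The only step requiring genuine care is the Taylor-coefficient identity $q^{[i]}(a)=b_i$, and in particular the interchange of "expand $q$ in powers of $s-a$" with "apply $(-)^{[i]}$ and evaluate at $a$". By linearity of the Hasse derivative it suffices to check the monomial case $q=(s-a)^i$, where one needs $\left((s-a)^i\right)^{[j]}$; this follows from the binomial/Vandermonde-type identity $\binom{i}{j} = $ the coefficient extracted by $j$-fold Hasse differentiation, together with the chain-rule-like fact that Hasse differentiation commutes with the substitution $s\mapsto s-a$ (which in turn is immediate since $(-)^{[j]}$ is defined on the monomial basis and $s-a$ differs from $s$ by a constant). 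This is entirely routine, so I expect no real obstacle; the lemma is essentially a restatement of the defining property of Hasse derivatives as divided powers, and the proof is short. For completeness one might also cite \cite{Jeong11} for the basic formalism of Hasse derivatives used here.
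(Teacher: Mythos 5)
Your argument is correct. Note that the paper itself states Lemma \ref{lem:vanishingHasse} without proof, treating it as a standard property of Hasse derivatives (with \cite{Jeong11} cited earlier for the formalism), so there is no in-paper proof to compare against; your write-up simply supplies the missing routine verification. The one step that carries all the weight is exactly the one you isolate: $\bigl((s-a)^i\bigr)^{[j]}=\binom{i}{j}(s-a)^{i-j}$, i.e.\ that Hasse differentiation commutes with the translation $s\mapsto s-a$. This is slightly more than ``immediate from the definition on the monomial basis'': expanding $(s-a)^i$ binomially and applying $(-)^{[j]}$ term by term, one needs the trinomial revision identity $\binom{i}{k}\binom{k}{j}=\binom{i}{j}\binom{i-j}{k-j}$, which holds over $\Z$ and hence in every characteristic — this is the precise reason the Hasse version survives reduction mod $p$ while the statement with ordinary derivatives (which would produce the factor $j!$) fails. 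With that identity in hand, $q^{[i]}(a)=b_i$ for $q=\sum_i b_i(s-a)^i$, and the equivalence with $(s-a)^j\mid q$, $(s-a)^{j+1}\nmid q$ is immediate as you say. No gap.
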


With these results we can finally solve Euler equations in the ring $\mathcal{R}$. We prove that, similar to the complex case in Lemma \ref{lem:solutionsL0}, the solutions form a vector space of dimension $n$ over the constants $\mathcal{C}\subseteq \mathcal{R}$.
\begin{prop} \label{prop:Euler}
Let $L$ be an Euler operator of order $n$ and let $\Omega:=\{\rho_1,\ldots, \rho_k\}$  be the set of local exponents of $L$ at $0$ with multiplicities $m_{\rho_1},\ldots ,m_{\rho_k}$. The solutions of $Ly=0$ in $\RR$ form a  $\mathcal{C}$-subspace of dimension $n$. A basis is given by
$$y_{\rho, i}:=t^{\rho} \z^{i^*}, \quad \rho \in \Omega,  \ i<m_{\rho},$$
where 
$$i^*=(i, \lfloor i/p \rfloor, \lfloor i/p^2 \rfloor, \lfloor i/p^3 \rfloor,\ldots)\in \Z^{(\N)}. $$
\end{prop}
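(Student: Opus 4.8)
The strategy is to exploit the explicit expansion of an Euler operator acting on the
monomials $t^\rho z^\alpha$ provided by Lemma~\ref{lem:monomials}, together with the
order structure $\prec_e$ on $\Z^{(\N)}$ from Lemma~\ref{lem:order}. The key numerical
observation is that the tuple $i^*=(i,\lfloor i/p\rfloor,\lfloor i/p^2\rfloor,\ldots)$ has
$e(i^*)=i$; indeed writing $i$ in base $p$ as $i=\sum_j c_j p^j$ with $0\le c_j<p$, the
$m$-th entry of $i^*$ is $\lfloor i/p^m\rfloor=\sum_{j\ge m}c_j p^{j-m}$, whose residue mod
$p$ is $c_m$, so $e(i^*)=\sum_m c_m p^m=i$. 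Thus by Lemma~\ref{lem:order} we have
$(x\partial)^i(z^{i^*})\ne 0$ but $(x\partial)^{i+1}(z^{i^*})=0$, and moreover $(x\partial)^j(z^{i^*})$
is a $\kk$-combination of monomials $z^\beta$ with $e(\beta)=i-j$ for $j\le i$.

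\textbf{Step 1: the monomials $y_{\rho,i}$ are solutions.}
Apply Lemma~\ref{lem:monomials} with $k=0$: since $\rho$ is a local exponent of multiplicity
$m_\rho$, Lemma~\ref{lem:vanishingHasse} gives $\chi_L^{[\ell]}(\rho)=0$ for all $\ell<m_\rho$.
Hence in the expansion
$L(t^\rho z^{i^*})=t^\rho\bigl(\chi_L(\rho)z^{i^*}+\chi_L'(\rho)\,x\partial(z^{i^*})+\cdots+\chi_L^{[n]}(\rho)(x\partial)^n(z^{i^*})\bigr)$
every term with $\ell<m_\rho$ vanishes because of the Hasse derivative, and every term with
$\ell\ge m_\rho>i$ vanishes because $(x\partial)^\ell(z^{i^*})=0$ (using $e(i^*)=i<m_\rho\le\ell$).
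So $L(y_{\rho,i})=0$ for all $\rho\in\Omega$ and $0\le i<m_\rho$. In particular there are
$\sum_{\rho\in\Omega}m_\rho=n$ such monomials.

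\textbf{Step 2: the $y_{\rho,i}$ are $\mathcal{C}$-linearly independent.}
Suppose $\sum_{\rho,i}c_{\rho,i}\,t^\rho z^{i^*}=0$ with $c_{\rho,i}\in\mathcal{C}$. Since $\partial$
preserves the direct-sum decomposition of $\mathcal{R}$ over the group $\kk$ and each
$c_{\rho,i}\in\mathcal{C}=\bigoplus_{\sigma\in\F_p}t^\sigma x^{p-\sigma}\kk(z^p)\orb x^p\crb$, one
compares the components in $t^{\rho+\sigma}\kk(z)\orb x\crb$. For fixed $\rho$ and fixed $\sigma\in\F_p$
this forces $\sum_i c_{\rho,i}^{(\sigma)}\,z^{i^*}=0$ where $c_{\rho,i}^{(\sigma)}\in x^{p-\sigma}\kk(z^p)\orb x^p\crb$;
these coefficients are polynomials in the $z_j^p$ (times a power series in $x$), whereas the
exponent tuples $i^*$ for $0\le i<m_\rho\le n\le p$ (after clearing: the relevant $i$ are
$0,1,\dots,p-1$, whose first coordinates $0,1,\ldots,p-1$ are pairwise distinct mod $p$) cannot be
absorbed into a $\kk(z^p)$-relation unless all the $c_{\rho,i}^{(\sigma)}$ vanish. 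Concretely, the monomial
$z^{i^*}$ has $z_1$-degree $i<p$, so reducing mod $z_1^p$ separates the finitely many $i$ with $i<p$
(and for the equations we actually care about $n\le p$, so $m_\rho\le p$ and this suffices), forcing
each $c_{\rho,i}=0$.

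\textbf{Step 3: there are no further solutions.}
We must show every solution is a $\mathcal{C}$-combination of the $y_{\rho,i}$; equivalently, the
solution space has dimension exactly $n$. Take $0\ne y=\sum_{\rho\in\kk}t^\rho f_\rho$ with
$f_\rho\in\kk(z)\orb x\crb$ and $Ly=0$; since $L$ preserves summands, each $t^\rho f_\rho$ is a
solution, so we may assume $y=t^\rho f$ for a single $\rho$. Write $f=\sum_{k}x^k f_k$ with
$f_k\in\kk(z)$ (finitely many negative $k$); clearing denominators we may take $f_k\in\kk[z]$.
Fix $k$ with $f_k\ne 0$ and let $z^\alpha$ be a $\prec_e$-maximal monomial of $f_k$ with
$e(\alpha)$ maximal over all $k$ and all monomials; set $e:=e(\alpha)$. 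Applying $L$, the expansion
of Lemma~\ref{lem:monomials} shows that the coefficient of $t^\rho x^k z^\alpha$ in $Ly$ is
$\chi_L(\rho+k)$ (the higher terms $\chi_L^{[\ell]}(\rho+k)(x\partial)^\ell z^\beta$ for $\ell\ge1$ only
produce monomials $\prec_e z^\alpha$ by Lemma~\ref{lem:order}, and no other $x^k$-term can contribute
a $z^\alpha$ of maximal $e$-value). Hence $\chi_L(\rho+k)=0$, so $\rho+k$ is a local exponent, and as
this must hold for every $k$ in the support of $f$, and the local exponents lie in $\overline\kk$ while
$k\in\Z$, the possible shifts $k$ are constrained to a finite set of residues; a descending induction on
$e$ and on $k$ — peeling off at each stage a suitable multiple of some $y_{\sigma,i}$ with $\sigma=\rho+k$
and $i\le e$, using that Lemma~\ref{lem:monomials} together with Lemma~\ref{lem:vanishingHasse} lets us
solve the recursion for the $z^{i^*}$-component exactly as in characteristic $0$ — reduces $y$ to a
$\mathcal{C}$-combination of the $y_{\rho,i}$ modulo lower-order data, and terminates because $e$ and the
span of the support decrease.

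\textbf{Main obstacle.}
Steps~1 and~2 are direct. The genuine difficulty is Step~3: bounding the solution space from above.
Unlike characteristic $0$ one cannot simply invoke Wronski's lemma in a differential field, because
$\mathcal{R}$ is not a domain (Remark after the definition of $\mathcal{R}$). One therefore has to argue
by hand, via the $\prec_e$-filtration, that the recursion forced by $L$ has, component by component in
the $t$-grading and the $x$-grading, exactly an $m_\rho$-dimensional kernel over $\mathcal{C}$ attached to
each exponent $\rho$; the bookkeeping of how the $\kk(z^p)$-coefficients (i.e.\ the elements of
$\mathcal{C}$) interact with the combinatorics of the tuples $i^*$ and the ordering $e(\cdot)$ is the
technical heart of the proof, and is where the explicit shape of $\mathcal{C}$ from
Proposition~\ref{prop:const} must be used in full.
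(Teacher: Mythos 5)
Your overall strategy---expanding $L(t^\rho x^k z^\alpha)$ via Lemma~\ref{lem:monomials}, filtering by $\prec_e$, and using $e(i^*)=i$---is the same as the paper's, and your Step~1 is correct. The genuine gap is in Step~2. You separate the monomials $z^{i^*}$ only through their $z_1$-degree $i$, and to make that work you assume ``$n\le p$, so $m_\rho\le p$''. No such restriction appears in the proposition, and the case $m_\rho>p$ really occurs: the paper's own illustration is $L=x^6\partial^6+x^4\partial^4+x^3\partial^3+x^2\partial^2$ over $\F_2$, where the exponent $1$ has multiplicity $5>p=2$, so that $0^*=(0,0,\ldots)$, $2^*=(2,1,0,\ldots)$ and $4^*=(4,2,1,0,\ldots)$ all have first coordinate $\equiv 0 \bmod 2$ and cannot be told apart by reducing modulo $z_1^p$. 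The separating statement you actually need---and the one the paper uses---is that for $i\neq j$ the full tuples $i^*$ and $j^*$ are never congruent componentwise modulo $p$, because $(i\bmod p,\ \lfloor i/p\rfloor\bmod p,\ \lfloor i/p^2\rfloor\bmod p,\ldots)$ is precisely the base-$p$ digit string of $i$; hence the $z^{i^*}$ are linearly independent over $\kk(z^p)$ for \emph{all} $i$, not just $i<p$. You also gloss over the fact that distinct local exponents $\rho,\rho'$ with $\rho-\rho'\in\F_p$ feed into the same $t$-component once multiplied by elements of $\mathcal{C}$; this is repaired by comparing $x$-orders modulo $p$ (the factor $x^{p-\sigma}$ in Proposition~\ref{prop:const}), which is why the paper first partitions $\Omega$ into classes modulo the prime field.

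Step~3 is also not yet a proof as written. The clean route, which is the paper's, is to note that an Euler operator preserves each homogeneous piece $t^\rho x^k\kk(z)$ of $\mathcal{R}$ in both the $t$- and the $x$-grading, so every solution splits into homogeneous solutions $t^\rho x^k f(z)$; the maximal-monomial comparison you begin then shows, descending through all monomials of $f$ rather than only the top one, that such a piece is a solution if and only if every exponent $\alpha$ of $f$ satisfies $e(\alpha)<m_{\rho+k}$, equivalently $\alpha\equiv i^*\pmod p$ componentwise for some $i<m_{\rho+k}$, whence $t^\rho x^k z^\alpha=(t^{\rho+\overline{k}}z^{i^*})\cdot c$ for an explicit $c\in\mathcal{C}$. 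Your ``descending induction on $e$ and on $k$, peeling off multiples of $y_{\sigma,i}$'' is too vague to verify (the support of $f$ in $k$ is infinite, so termination of the peeling is not evident) and is in any case unnecessary once the homogeneous decomposition is exploited.
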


Before we prove the proposition let us consider an example.
\begin{ex}
Consider the differential operator $L=x^6\partial^6 +x^4\partial^4+x^3\partial^3+x^2\partial^2\in \F_2[x][\partial]$ with indicial polynomial $\chi_L(s)=(s-1)^5s$. As the operator has order $6$ one expects $6$ solutions of $Ly=0$, independent over $\mathcal{C}$. The proposition asserts that a basis is given by \[1, x, x z_1, x z_1^2z_2, x z_1^3z_2, x z_1^4z_2^2z_3.\] Indeed, one easily verifies that all these monomials are solutions and are $\mathcal{C}$-linearly independent.
\end{ex}

\begin{proof}
The operator $L$ is $\mathcal{C}$-linear and maps $t^\rho x^k\kk( \z )$ into itself. Therefore it suffices to find solutions of $Ly=0$ of the form $t^\rho f(\z) x^k$, where $f\in \kk( \z)$. Further we can argue similar as in Proposition \ref{prop:const}: we write $f=g_1/g_2$ for $g_1, g_2\in \kk[\z]$. If $t^\rho f(\z) x^k$ is a solution, then so is
$$
g_2(\z)^p\left(t^\rho f(\z) x^k\right)=t^\rho g_1(\z)g_2(\z)^{p-1} x^k,
$$
as $g_2^p\in \kk[\z^p]\subseteq \mathcal{C}$. So we may assume without loss of generality that $0\neq f\in \kk[\z]$.

Let $z^\alpha$ be the largest monomial of $f(\z)$ with respect to the ordering $\prec_e$. By Lemma \ref{lem:monomials} and the linearity of $L$ we obtain 
$$
L(t^\rho f(\z) x^k)=t^\rho\left(\chi_L(\rho+k)f(\z)+\chi_L^{[1]}(\rho+k)(x\6)f(\z)+\ldots +\chi_L^{[n]}(\rho+k)(x\6)^nf(\z)\right).
$$
Hence $L(t^\rho f(\z) x^k)$ vanishes if and only if 
$$
\chi_L(\rho+k)f(\z)+\chi_L^{[1]}(\rho+k)(x\6)f(\z)+\ldots +\chi_L^{[n]}(\rho+k)(x\6)^nf(\z)
$$
vanishes. We compare the coefficients of monomials in $\z$ starting with the largest. All appearing monomials are smaller than or equal to $z^\alpha$ by Lemma \ref{lem:order} and for all monomials $\z^\gamma$ in the summand $\chi_L(\rho+k)(x\partial)^j$ we have $e(\gamma)\leq e(\alpha)-j$. So in order for the sum to vanish, $\chi_L(\rho+k)$ has to vanish by comparing coefficients of $ \z^\alpha$. Further, by comparing coefficients of the next smaller monomials, we obtain $\chi_L^{[1]}(\rho+k)=0$ or $(x\partial)z^\alpha=0$, i.e. $e(\alpha)=1$. Inductively we obtain that the sum vanishes, if and only if $\chi_L^{[\ell]}(\rho+k)\neq 0$ implies that $e(\alpha)<\ell$. Put differently, by Lemma \ref{lem:vanishingHasse}, if $\rho+k$ is a local exponent of $L$ of multiplicity $m_{\rho+k}$, then $e(\alpha)<m_{\rho+k}.$  Thus we can give a complete description of the elements in the kernel of $L$. They are of the form $t^\rho x^k z^\alpha$, where $e(\alpha)<m_{\rho+k}$. 

A quick calculation using Lemma \ref{lem:order} shows that the last condition is fulfilled for multi-indices, whose entries differ by multiples of $p$ from $i^*$ for $i=0,\ldots, m_{\rho+k}-1$. This shows on the one hand that the elements $y_{\rho, i}$ are indeed solutions of $Ly=0$. On the other hand, the elements $y_{\rho, i}$ are chosen such that $\rho$ ranges over all local exponent of $L$ exactly once. For $\rho+k$ to be a local exponent, i.e., a zero of $\chi_L$, we may add multiples of $p$ to $k$, or subtract an element of the prime field from $\rho$ and add it to $k$. Those transformations can be realized by multiplying a solution $t^\rho x^k f(\z)$ by an element from $\mathcal{C}$. So indeed, all solutions of $Ly=0$ are linear combinations of the elements $y_{\rho, i}$; that is they generate the $\mathcal{C}$-vector space of solutions.

Assume now that a $\mathcal{C}$-linear relation between the solutions $y_{\rho, i}$ exists. Let 
$$\mathcal{D}:=\bigoplus_ {\rho \in \F_p}t^\rho x^{p-\rho}\kk[\z^p]\osb x^p\csb.
$$
As  $\mathcal{C}={\rm Quot}\mathcal{D}$, it suffices to consider a relation with coefficients in $\mathcal{D}$. Let $\Omega=\bigsqcup_j \Omega_j$ be the set of all local exponents, where two local exponents $\rho, \sigma$ are in the same subset $\Omega_j$ if and only if their difference is in the prime field.  Assume that 
$$
\sum_j \sum_{\substack{\rho_\in \Omega_j\\ i < m_\rho}} y_{\rho, i} \cdot d_{\rho, i}=0
$$ 
for some $d_{\rho, i}\in \mathcal{D}$. As the exponents of $t$ of elements of $\mathcal {D}$ are in the prime field of $\kk$, it follows that for each $j$ the sum 
$$
\sum_{\substack{\rho_\in \Omega_j\\ i< m_\rho}} y_{\rho, i}\cdot  d_{\rho, i}
$$
vanishes. So it suffices to focus on relations between solutions corresponding to local exponents in the same set set $\Omega_j$. Without loss of generality $\Omega_j=\F_p$, the prime field of $\kk$. We consider now a relation of the form
$$
\sum_{\substack{\rho \in \F_p\\ i<m_\rho}} y_{\rho, i}\cdot  d_{\rho, i}=0.
$$
Without loss of generality we may assume that at least one of the constants $d_{\rho, i}$ has order $0$ in $x$ and let $f_{\rho, i}\in \kk[\z^p]$ be its constant term. Taking the coefficient of the monomial with smallest degree with respect to $x$ in the sum above, we obtain a relation of the form
$$
\sum_{\substack{\rho \in \F_p\\ i<m_\rho}} t^\rho z^{\alpha_i} \cdot f_{\rho, i}=0.
$$
This sum vanishes if and only if the summand for each $\rho\in \F_p$ vanishes. Furthermore the  multi-exponents $i^*=(i, \lfloor i/p\rfloor, \lfloor i/p^2\rfloor,\ldots)$ are defined such that no two of them differ by multiples of $p$ in every component. Thus $f_{\rho, i}=0$ for all $\rho$ and $i$, as required.

Finally, note that $\sum_{\rho \in \Omega}m_\rho=n$, as $\chi_L$ is a polynomial of degree $n$. So the dimension of the space of solutions is indeed $n$.
\end{proof}

\subsection{The normal form theorem in positive characteristic} \label{subsec:nftp}
We have seen that a basis of solutions of Euler equations is of a very special form. It is not to be expected that solutions of general differential equations with regular singularities are similarly simple. %We now try to find a description of the solutions of  differential equations $L$ based on the basis of monomial solutions of their initial form $L_0$. 
In the following let $\rho$ be a fixed local exponent of $L$ at $0$ of multiplicity $m_\rho$. We define a function $\xi:\N\to \N$
$$
\xi(0)=m_\rho,\qquad \xi(k+1)=\xi(k)+m_{\rho + k +1},
$$
where $m_{\rho + j}=0$ if $\rho + j$ is not a root of the indicial polynomial. In other words
$$
\xi(k)=m_\rho+m_{\rho+1}+\ldots + m_{\rho+k}.
$$
Note here that if $k>p$ the summand $m_{\rho}$ appears at least twice in the sum. Moreover we define the {\it $\rho$-function space $\mathcal{F}=\mathcal{F}^\rho_L$ associated to $L$} as
$$
\mathcal{F}^\rho_L:=t^\rho\sum_{k=0}^\infty \bigoplus_{\alpha \in \mathcal{A}_k}\kk \z^\alpha x^k,
$$
where 
$$
\mathcal{A}_k:=\left\{\alpha \in \N^{(\N)}\middle | \alpha_1<\xi(k), \alpha_{j+1}\leq\alpha_j/p \quad \text{for all}\ j\in \N\right\}
$$
is a finite subset of $\N^{(\N)}$. Note that $\mathcal{F}$ only depends on the initial form of the differential operator $L$, more precisely, only on the multiplicity of all local exponents of $L$ that differ from $\rho$ by an element of the prime field of $\kk$.
\begin{ex}
Consider the differential operator $L=x^3\partial ^3+2x^2\partial^2+\widetilde L\in \F_3\osb x\csb [\partial]$, where $\widetilde L\in \F_3\osb x\csb[\partial]$ has positive shift. The local exponents of $L$ are $0$ with multiplicity $2$ and $1$ with multiplicity $1$. The monomials in $\mathcal{F}^0_L$ are depicted below in Figure \ref{fig:monomials}.
\end{ex}

\begin{figure}[htb!] 
\centering
\includegraphics{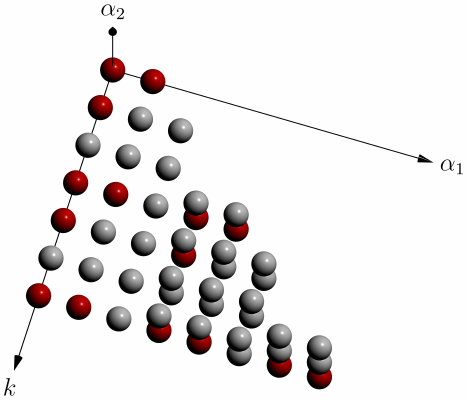}
\caption{The set of exponents $(k, \alpha_1, \alpha_2)$ of monomials $x^kz_1^{\alpha_1}z_2^{\alpha_2}$  in $\mathcal{F}^0_L$ with $k\leq 6$, exponents of monomials in $\Ker(L_0)$ in red. They are $1, z_1, x, x^3, x^3z_1, x^3z_1^3, x^3z_1^4, x^4, x^4z_1^3,\allowbreak x^6, x^6z_1, x^6z_1^3, x^6z_1^4, x^6z_1^6, x^6z_1^7$.} \label{fig:monomials}
\end{figure}

\begin{lem} \label{lem:closed}
Let $L\in \kk\osb x\csb [\partial]$ be a linear differential operator and let $\rho$ be one of its local exponents. The space $\mathcal{F}^\rho_L=\mathcal{F}$ is invariant under all differential operators with non-negative shift. In particular we have $L\mathcal{F}\subseteq \mathcal{F}$.
\end{lem}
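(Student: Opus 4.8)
The plan is to derive the statement from two elementary facts about $\mathcal{F}=\mathcal{F}^\rho_L$: that it is a $\kk\osb x\csb$-submodule of $\RR$, and that it is stable under the Euler derivative $\delta=x\6$. The first is immediate: since $\xi$ is non-decreasing ($\xi(k+1)-\xi(k)=m_{\rho+k+1}\geq 0$), the sets $\mathcal{A}_k$ are nested, $\mathcal{A}_k\subseteq\mathcal{A}_{k'}$ for $k\leq k'$ --- the only defining condition depending on $k$ being $\alpha_1<\xi(k)$ --- so multiplication of a monomial $t^\rho\z^\alpha x^k\in\mathcal{F}$ by any element of $\kk\osb x\csb$ stays in $\mathcal{F}$. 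Granting $\delta\mathcal{F}\subseteq\mathcal{F}$, the lemma follows: an operator of order $n$ and non-negative shift has the form $L=\sum_{j=0}^n p_j(x)\6^j$ with $\ord_x p_j\geq j$, so $p_j(x)=q_j(x)\,x^j$ with $q_j\in\kk\osb x\csb$, and with the classical identity $x^j\6^j=\delta^{\ul j}=\delta(\delta-1)\cdots(\delta-j+1)$ (cf.\ Remark~\ref{rem:stirling}) one rewrites $L=\sum_{j=0}^n q_j(x)\,\delta^{\ul j}$. Being a $\kk\osb x\csb$-module closed under $\delta$, $\mathcal{F}$ is then closed under each $\delta-m$, hence under $\delta^{\ul j}$, hence under $q_j(x)\,\delta^{\ul j}$, hence under $L$; in particular $L\mathcal{F}\subseteq\mathcal{F}$, since $L$ has shift $0$.

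The heart of the matter is thus the claim $\delta\mathcal{F}\subseteq\mathcal{F}$, and this reduces to a combinatorial statement about the action of $x\6$ on $z$-monomials: \emph{if $\alpha\in\mathcal{A}_k$, then every $\z^\beta$ occurring with non-zero coefficient in $(x\6)\z^\alpha$ again satisfies $\beta\in\mathcal{A}_k$.} By the computation in the proof of Lemma~\ref{lem:order} one has $(x\6)\z^\alpha=\sum_i\alpha_i\,\z^{\gamma(i)}$ with $\gamma(i)=(\alpha_1-1,\dots,\alpha_i-1,\alpha_{i+1},\alpha_{i+2},\dots)$, the sum running over the indices $i$ in the support of $\alpha$, and a summand survives only when $\alpha_i\not\equiv 0\bmod p$. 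We may assume $\alpha\neq 0$; if $\alpha_t$ is its last non-zero entry, the chain inequalities $\alpha_{j+1}\leq\alpha_j/p$ force $\alpha_j\geq p^{\,t-j}\geq 1$ for all $j\leq t$, so the support is the whole block $\{1,\dots,t\}$ and every $\gamma(i)$ again belongs to $\N^{(\N)}$. The condition $\gamma(i)_1=\alpha_1-1<\xi(k)$ is inherited from $\alpha_1<\xi(k)$. For the chain conditions $\gamma(i)_{j+1}\leq\gamma(i)_j/p$ one checks three cases: $j+1\leq i$, where both entries have dropped by $1$ and $p(\alpha_{j+1}-1)\leq\alpha_j-1$ follows from $p\alpha_{j+1}\leq\alpha_j$; $j>i$, where nothing has changed; and $j=i$, where $\gamma(i)_i=\alpha_i-1$ has dropped while $\gamma(i)_{i+1}=\alpha_{i+1}$ has not, so one needs the \emph{strict} inequality $p\alpha_{i+1}<\alpha_i$ --- which holds, since equality $p\alpha_{i+1}=\alpha_i$ would force $p\mid\alpha_i$ and hence make the term $\alpha_i\z^{\gamma(i)}$ vanish. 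This proves the claim.

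Finally, applying $\delta$ to a monomial of $\mathcal{F}$, the Leibniz rule gives
\[\delta(t^\rho\z^\alpha x^k)=(\rho+k)\,t^\rho\z^\alpha x^k+t^\rho x^k\,(x\6)\z^\alpha,\]
and both summands lie in $\mathcal{F}$ --- the first trivially, the second because $(x\6)\z^\alpha$ is a $\kk$-linear combination of monomials $\z^\beta$ with $\beta\in\mathcal{A}_k$ by the claim. By $\kk$-linearity $\delta\mathcal{F}\subseteq\mathcal{F}$, which together with the first paragraph completes the proof. The main obstacle, and the point to spell out carefully in the final write-up, is precisely the case $j=i$ above: the chain inequality for $\gamma(i)$ can fail only when $p\alpha_{i+1}=\alpha_i$, and this is exactly the situation in which the offending monomial has already disappeared from $(x\6)\z^\alpha$ because its coefficient is divisible by $p$. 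Everything else --- the nestedness of the $\mathcal{A}_k$ and the reduction of a non-negative-shift operator to $x$ and $\delta$ via $x^j\6^j=\delta^{\ul j}$ --- is routine.
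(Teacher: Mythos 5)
Your proof is correct and follows essentially the same route as the paper's: reduce a non-negative-shift operator to the Euler derivative $\delta=x\6$ via $x^j\6^j=\delta^{\ul j}$, then verify that the exponents $\gamma(i)$ appearing in $(x\6)\z^\alpha$ stay in $\mathcal{A}_k$, with the key observation that the chain inequality can only fail at the boundary index when $p\mid\alpha_i$, in which case the offending term has coefficient zero. You are in fact slightly more careful than the paper on two routine points it leaves implicit --- the nestedness $\mathcal{A}_k\subseteq\mathcal{A}_{k+1}$ needed for $\kk\osb x\csb$-module stability, and the non-negativity of the entries of $\gamma(i)$ --- but the substance of the argument is identical.
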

\begin{proof}
We can rewrite any differential operator with non-negative shift in terms of the operator $\delta=x\partial$ instead of $\partial$, where the base change between $x^n\partial^n$ and $\delta^n$ is given by the Stirling numbers, see Remark \ref{rem:stirling}. So we investigate the action of $\delta$ on a monomial $t^\rho x^i z^\alpha\in \mathcal{F}$, where $\alpha=(\alpha_1,\ldots, \alpha_n)\in \N^{(\N)}.$ We compute as in Lemma \ref{lem:order}
$$
\delta (t^\rho x^k z^\alpha) = x\6(t^\rho x^k z^\alpha)=t^\rho x^{k} \left( (k+\rho)z^\alpha + \sum_{j=1}^n\alpha_j z_1^{\alpha_1-1}\cdots z_j^{\alpha_j-1}z_{j+1}^{\alpha_{j+1}}\cdots z_n^{\alpha_n}\right).
$$
We want to show that all exponents of monomials with non-zero coefficient in the sum above are in $\mathcal{A}_k$. It is clear that $\alpha\in \mathcal{A}_k$ by assumption, so it remains to prove that if $\alpha_j\not \equiv 0\mod p$ then $$(\alpha_1-1,\ldots, \alpha_j-1, \alpha_{j+1}, \ldots, \alpha_n)\in \mathcal{A}_k$$ for $j=1,\ldots, n$. If $\alpha_{l+1}\leq\alpha_l/p$ then also $\alpha_{l+1}-1\leq(\alpha_l-1)/p$ for $l<j$. It remains to show that $\alpha_{j+1}> (\alpha_j-1)/p$ implies $\alpha_j\equiv 0 \mod p$. For this we see that from
$$
\alpha_j-1< p\alpha_{j+1}\leq\alpha_j
$$ 
 it follows indeed that $p$ divides $\alpha_j = p\alpha_{j+1}$.
\end{proof}

\begin{lem} [cf.~Lemma \ref{lem:image}]\label{lem:surjectivity}
Let $L_0\in \kk\osb x\csb[\partial]$ be an Euler operator with local exponent $\rho$ and associated $\rho$-function space $\mathcal{F}$. Then $L_0(\mathcal{F})=\mathcal{F} x$.
\end{lem}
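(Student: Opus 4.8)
The plan is to transport the proof of Lemma~\ref{lem:image}(a) to characteristic $p$, trading ordinary derivatives for Hasse derivatives and the degree in $z$ for the number $e(\alpha)$. Since $L_0$ is an Euler operator of shift $0$ it preserves the exponent of $t$ and the exponent of $x$, so, writing $\mathcal F_k:=t^\rho x^k\cdot\mathrm{span}_\kk\{z^\alpha:\alpha\in\mathcal A_k\}$ and $(\mathcal Fx)_k:=t^\rho x^k\cdot\mathrm{span}_\kk\{z^\alpha:\alpha\in\mathcal A_{k-1}\}$ (recall $\mathcal A_{k-1}\subseteq\mathcal A_k$ since $\xi$ is non-decreasing), it suffices to prove $L_0(\mathcal F_k)=(\mathcal Fx)_k$ for every $k$. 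On $\mathcal F_k$ the operator $L_0$ acts, by Lemma~\ref{lem:monomials}, as $t^\rho x^k\,\mathcal E_k$ with $\mathcal E_k:=\sum_{j=0}^n\chi_{L_0}^{[j]}(\rho+k)(x\partial)^j$, and by Lemma~\ref{lem:vanishingHasse} the terms with $j<m:=m_{\rho+k}$ vanish while $\chi_{L_0}^{[m]}(\rho+k)\neq 0$. Two facts about $x\partial$ drive everything: (i) from the formula for $\partial z^\alpha$ in the proof of Lemma~\ref{lem:order}, every monomial occurring in a non-zero $(x\partial)^j z^\alpha$ has first exponent exactly $\alpha_1-j$; and (ii) sharpening the proof of Lemma~\ref{lem:order}, the unique monomial of $(x\partial)z^\alpha$ of maximal value $e(\alpha)-1$ is $z^{\alpha^-}$ with $\alpha^-=\alpha-(1,\dots,1,0,\dots)$, the ones in positions $1,\dots,j_0$, $j_0:=\min\{i:\bar\alpha_i\neq 0\}$, and its coefficient $\alpha_{j_0}$ is a unit modulo $p$, all other monomials having strictly smaller value.

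For the inclusion $L_0(\mathcal F_k)\subseteq(\mathcal Fx)_k$, take $\alpha\in\mathcal A_k$, $\alpha\neq 0$. By (i) and Lemma~\ref{lem:closed}, every monomial $z^\gamma$ occurring in a non-zero $(x\partial)^j z^\alpha$ with $j\ge m$ satisfies $\gamma\in\mathcal A_k$ and $\gamma_1=\alpha_1-j\le\alpha_1-m<\xi(k)-m=\xi(k-1)$, hence $\gamma\in\mathcal A_{k-1}$; so $L_0(t^\rho x^k z^\alpha)\in(\mathcal Fx)_k$. The case $\alpha=0$ is immediate, since $L_0(t^\rho x^k)$ is either $0$ or $\chi_{L_0}(\rho+k)t^\rho x^k$ and $0\in\mathcal A_{k-1}$ (as $\xi(k-1)\ge m_\rho\ge 1$).

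For the reverse inclusion I would show $t^\rho x^k z^\alpha\in L_0(\mathcal F_k)$ for every $\alpha\in\mathcal A_{k-1}$ by induction on $e(\alpha)$. Define $\beta$ by applying to $\alpha$, $m$ times, the \emph{reverse greedy step} $\gamma\mapsto\gamma+(1,\dots,1,0,\dots)$ with the ones in positions $1,\dots,j_1$, $j_1:=\min\{i:\bar\gamma_i\neq p-1\}$; this is exactly the inverse of $\gamma\mapsto\gamma^-$ from (ii), in that a greedy step undoes a reverse greedy step. A reverse greedy step adds $1$ to the first exponent (so $\beta_1=\alpha_1+m<\xi(k-1)+m=\xi(k)$) and preserves the inequalities $\gamma_{l+1}\le\gamma_l/p$: the only non-trivial case is $l<j_1$, where $\bar\gamma_l=p-1$ forces $\gamma_l\equiv-1\pmod p$, so the new $l$-th exponent $\gamma_l+1$ is divisible by $p$ with $(\gamma_l+1)/p=\lfloor\gamma_l/p\rfloor+1\ge\gamma_{l+1}+1$. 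Hence $\beta\in\mathcal A_k$ and $e(\beta)=e(\alpha)+m$, so $(x\partial)^m z^\beta\neq 0$. Iterating (ii) (using that greedy undoes reverse greedy) yields $(x\partial)^m z^\beta=c\,z^\alpha+(\text{monomials of value}<e(\alpha))$ with $c$ a product of $m$ units modulo $p$, hence $c\neq 0$, while the terms $j>m$ of $\mathcal E_k z^\beta$ consist of monomials of value $\le e(\beta)-j<e(\alpha)$. By the first inclusion all these error monomials lie in $\mathcal A_{k-1}$, so by the induction hypothesis they lie in $L_0(\mathcal F_k)$; therefore $\chi_{L_0}^{[m]}(\rho+k)\,c\,t^\rho x^k z^\alpha\in L_0(\mathcal F_k)$, and since $\chi_{L_0}^{[m]}(\rho+k)c\neq 0$ we conclude $t^\rho x^k z^\alpha\in L_0(\mathcal F_k)$. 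The base case $e(\alpha)=0$ has no error terms, and the degenerate case $m=0$ (so $\chi_{L_0}(\rho+k)\neq 0$ and $\mathcal A_k=\mathcal A_{k-1}$) follows the same pattern with $\beta=\alpha$.

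The heart of the argument, and its main obstacle, is this construction of the preimage monomial $z^\beta$: one must guarantee simultaneously that $\beta$ still lies in $\mathcal A_k$ — which rests on the combinatorics of the reverse greedy step together with the defining identity $\xi(k)=\xi(k-1)+m_{\rho+k}$ of the function $\xi$ — and that the coefficient $c$ of $z^\alpha$ in $(x\partial)^m z^\beta$ does not vanish; the latter is automatic, since each greedy step decrements a position whose exponent is, by the choice of $j_0$, invertible modulo $p$.
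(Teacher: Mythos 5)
Your proof is correct and takes essentially the same route as the paper's: the same two inclusions, the same use of Lemmata~\ref{lem:monomials}, \ref{lem:vanishingHasse}, \ref{lem:closed} and the drop of the $z_1$-exponent by at least $m_{\rho+k}$ for $L_0(\mathcal F)\subseteq\mathcal F x$, and the same induction on $e(\alpha)$ via a preimage monomial $z^\beta$ for the converse --- indeed your $m$-fold reverse greedy step produces exactly the $\beta$ of the paper's recursion $\beta_1=\alpha_1+\ell$, $\beta_j=\alpha_j+\lfloor\beta_{j-1}/p\rfloor-\lfloor\alpha_{j-1}/p\rfloor$. Your explicit check that the leading coefficient $c$ is a product of units modulo $p$ usefully makes precise a nonvanishing that the paper's phrase ``it is of the form $c_\alpha t^\rho x^{k+1}z^\alpha$ by construction'' leaves implicit.
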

\begin{proof}
First we show that any monomial in $\mathcal{F}$ gets mapped to $\mathcal{F} x$ under $L_0$. Let $t^\rho x^k z^\alpha\in \mathcal{F}$. By Lemma \ref{lem:monomials} we have 
$$
L_0(t^\rho x^{k}\z^\alpha)=t^{\rho}x^{k}\left(\chi_L(\rho+k) \z^\alpha+\chi_L'(\rho+k)x\6(\z^\alpha)+\ldots + \chi_L^{[n]}(\rho+k)(x\6)^n(\z^\alpha)\right).
$$
By Lemma \ref{lem:closed} this expression is contained in $\mathcal{F}$. The first $m_{\rho+k}$ summands of the sum vanish due to Lemma \ref{lem:vanishingHasse}. In the remaining summands $x\6$ is applied at least $m_{\rho+k}$ times to $\z^\alpha$, decreasing the exponent of $z_1$ by at least $m_{\rho+k}$. Thus for each monomial with non-zero coefficient in 
$$
\chi_L^{[m_{\rho+k}]}(\rho+k)(x\6)^{m_{\rho+k}}(\z^\alpha)+\ldots + \chi_L^{[n]}(\rho+k)(x\6)^n(\z^\alpha)
$$ 
the exponents of $\z$ are in $\mathcal{A}_{k-1}$ and thus $L_0(t^\rho x^{k}\z^\alpha)\in \mathcal{F} x$.

Now we show that every monomial of $\mathcal{F} x$ is in the image of $\mathcal{F}$ under $L_0$. We proceed by induction on $e(\alpha)=\overline{\alpha_1}+p\overline{\alpha_2}+p^2\overline{\alpha_3}+\ldots$ Let $t^\rho x^{k+1} z^\alpha\in \mathcal{F} x$; that is $\alpha\in \mathcal{A}_k$. Assume that $\rho+k+1$ is an $\ell$-fold root of $\chi_L$, where $\ell$ is set equal to $0$ if $\rho+k+1$ is not a root at all. We define an element $\beta\in \mathcal{A}_{k+1}$ such that $L_0(t^\rho x^{k+1}\z^\beta)=t^\rho x^{k+1}\z^\alpha+r$, where $r$ is a sum of smaller monomials with respect to $\prec_e$. Set $$\beta_1=\alpha_1+\ell,\qquad \beta_j=\alpha_j+\lfloor \beta_{j-1}/p \rfloor-\lfloor \alpha_{j-1}/p\rfloor.$$

As $t^\rho x^k \z^\alpha\in \mathcal{F}$, we have $\alpha_1<\xi(k)$ and therefore $\beta_1=\alpha_1+\ell<\xi(k+1).$ Moreover, we know that $\alpha_j\leq \lfloor \alpha_{j-1}/p\rfloor$ and therefore also $\beta_j=\alpha_j+\lfloor\beta_{j-1}/p\rfloor-\lfloor\alpha_{j-1}/p\rfloor\leq  \beta_{j-1}/p$. By construction we have $\beta_1=\alpha_1+l$ and thus $\beta_1<\xi(k+1)$. Altogether this proves $\beta \in \mathcal{A}_{k+1}$.

Finally we show  that $L_0(t^\rho x^{k+1}\z^\beta)$ is of the desired form. Again  by Lemma \ref{lem:monomials} we have
$$
L_0(t^\rho x^{k+1}\z^\beta)=t^{\rho}x^{k+1}\left(\chi_L(\rho+k+1) \z^\beta
% +\chi_L'(\rho+k+1)x\6(\z^\beta)
+\ldots + \chi_L^{[n]}(\rho+k+1)(x\6)^n(\z^\beta)\right)
$$
As $\rho +k +1$ has multiplicity $\ell$ as a zero of $\chi_L$, the first $\ell$ summands of this expansion vanish, according to Lemma \ref{lem:vanishingHasse}. If one expands the further summands using the Leibniz rule one gets a sum of monomials of the form $c_\gamma t^\rho x^{k+1} \z^\gamma$, with $c_\gamma \in \kk$. The exponents $\gamma$ are in $\mathcal{A}_k$ and by Lemma \ref{lem:order} we have $e(\gamma)\leq e(\beta)-\ell=e(\alpha)$. Only one of these summands fulfils $e(\gamma)=e(\alpha)$. It is of the form $c_\alpha t^\rho x^{k+1} \z^\alpha$ by construction. Now by the induction hypothesis, all other summands are in the image of $\mathcal{F}$ under $L_0$; they are in $\mathcal{F} x$ because of Lemma \ref{lem:closed}. Thus, $t^\rho x^{k+1}\z^\alpha\in L_0(\mathcal{F})$, which concludes the proof.
\end{proof}
 
\begin{rem} \label{rem:constructive}
The proof of the surjectivity of $L_0$ is constructive: For each monomial $t^\rho x^kz^\alpha$ in $\mathcal{F} x$ one constructs a monomial $t^\rho x^kz^\beta$ in $\mathcal{F}$, such that $L_0(t^\rho x^kz^\beta)=c t^\rho x^kz^\alpha+r$, where $r$ is a sum of smaller monomials. If $r=0$ we divide by $c$ and are done. Otherwise we iterate the construction for all monomials in $r$ and subtract the monomials obtained this way from $c^{-1} t^\rho x^kz^\beta$. After at most $e(\alpha)$ steps $r=0$ and we have constructed an element of $\mathcal{F}$ which is sent to $t^\rho x^kz^\alpha$ by $L_0$. 
\end{rem}

\begin{lem}
The kernel of the restriction of $L_0$ to $\mathcal{F}=\mathcal{F}^\rho_L$ is topologically spanned over $\kk$ by monomials of the form $t^\rho x^kz^\alpha$, where
$$
\alpha\in \mathcal{A}_k=\left\{\alpha \in \N^{(\N)}\middle | \alpha_1<\xi(k), \alpha_{j+1}\leq \alpha_j/p \quad \text{for all} \ j\in \N\right\}
$$
with $e(\alpha)<m_{\rho+k}$. Consequently, a direct complement $\mathcal{H}$ of $\Ker\, L_{0}\vert_{\mathcal{F}}$ is topologically spanned by monomials of the form $t^\rho x^kz^\alpha$, where $\alpha \in \mathcal{A}_k$ with $e(\alpha)\geq m_{\rho+k}.$
\end{lem}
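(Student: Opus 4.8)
The plan is to split $\mathcal{F}=\mathcal{F}^\rho_L$ by powers of $x$ and reduce the assertion to a statement about the locally nilpotent operator $\delta=x\partial$ on a finite-dimensional space. For each $k$ put $V_k:=\bigoplus_{\alpha\in\mathcal{A}_k}\kk\,\z^\alpha$, so that the $x$-degree-$k$ part of $\mathcal{F}$ is $t^\rho x^k V_k$. Since $L_0$ is an Euler operator of shift $0$ we may write $L_0=\chi_L(\delta)$ (Remark~\ref{rem:stirling}); as $\delta$ preserves the $x$-degree and maps $\mathcal{F}$ into $\mathcal{F}$ (Lemma~\ref{lem:closed}), both $\delta$ and $L_0$ induce endomorphisms of each $V_k$, which we again denote $\delta,L_0$. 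Because $\mathcal{F}$ is the $x$-adic completion of $\bigoplus_k t^\rho x^kV_k$, we get $\Ker(L_0\vert_{\mathcal{F}})=\prod_k \Ker(L_0\vert_{V_k})$ as topological $\kk$-vector spaces, so it suffices to compute each $\Ker(L_0\vert_{V_k})$ and to see it is spanned by the monomials $\z^\alpha$ ($\alpha\in\mathcal{A}_k$) with $e(\alpha)<m_{\rho+k}$.

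Fix $k$ and put $m:=m_{\rho+k}$, with $m=0$ if $\rho+k$ is not a root of $\chi_L$. By Lemma~\ref{lem:monomials}, $L_0$ acts on $V_k$ as $\psi(\delta)$, where $\psi(T):=\sum_{i=0}^{n}\chi_L^{[i]}(\rho+k)\,T^{i}=\chi_L(\rho+k+T)$ is the Hasse--Taylor expansion of $\chi_L$ at $\rho+k$; by Lemma~\ref{lem:vanishingHasse}, $\psi(T)=T^{m}q(T)$ with $q(0)=\chi_L^{[m]}(\rho+k)\neq 0$. The operator $\delta$ is nilpotent on $V_k$: by Lemma~\ref{lem:order} the power $\delta^{N}$ annihilates every $\z^\alpha$ with $e(\alpha)<N$, and $e$ is bounded on the finite set $\mathcal{A}_k$. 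Hence $q(\delta)=q(0)\,\Id+(\text{nilpotent})$ is invertible on $V_k$, and therefore $\Ker(L_0\vert_{V_k})=\Ker(\delta^{m}\vert_{V_k})$ (for $m=0$ this is $0$, matching the empty condition $e(\alpha)<0$).

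It remains to identify $\Ker(\delta^{m}\vert_{V_k})$ with $\langle\,\z^\alpha:\alpha\in\mathcal{A}_k,\ e(\alpha)<m\,\rangle_\kk$. The inclusion $\supseteq$ is immediate from Lemma~\ref{lem:order}: if $e(\alpha)<m$ then $\delta^{m}\z^\alpha$ is a $\kk$-combination of monomials of $e$-value $\leq e(\alpha)-m<0$, hence $0$. For $\subseteq$, filter $V_k$ by $V_k^{\leq d}:=\langle\,\z^\alpha:\alpha\in\mathcal{A}_k,\ e(\alpha)\leq d\,\rangle_\kk$. By Lemma~\ref{lem:order} one has $\delta(V_k^{\leq d})\subseteq V_k^{\leq d-1}$, and the induced graded map $\overline\delta\colon V_k^{\leq d}/V_k^{\leq d-1}\to V_k^{\leq d-1}/V_k^{\leq d-2}$ sends a basis monomial $\z^\alpha$ with $e(\alpha)=d\geq 1$ to $c_\alpha\,\z^{\alpha^{-}}$, where $c_\alpha\neq 0$ and $\alpha^{-}$ is the unique exponent of $e$-value $d-1$ occurring in $\delta\z^\alpha$. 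The crucial point is that $\alpha\mapsto\alpha^{-}$ is injective: with $j$ the least index such that $\overline{\alpha}_j\neq 0$ one has $\gamma:=\alpha^{-}=(\alpha_1-1,\dots,\alpha_j-1,\alpha_{j+1},\alpha_{j+2},\dots)$, so $\overline{\gamma}_i=p-1$ for $i<j$ while $\overline{\gamma}_j=\overline{\alpha}_j-1\leq p-2$; thus $j$ is the least index at which the reduction of $\gamma$ modulo $p$ differs from $p-1$, and $\alpha$ is then recovered from $\gamma$. Consequently $\overline\delta$ is injective on every graded piece, so if $g\in V_k$ has largest $e$-value $E\geq m$ among its monomials, with leading graded part $g_E\neq 0$, then $\delta^{m}g$ has leading graded part $(\overline\delta)^{m}g_E\neq 0$; in particular $\delta^{m}g\neq 0$. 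This establishes $\subseteq$ and thereby the description of $\Ker(L_0\vert_{\mathcal F})$.

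For the complement, at each level $k$ the monomial basis of $V_k$ is the disjoint union of $\{\z^\alpha:\alpha\in\mathcal{A}_k,\ e(\alpha)<m_{\rho+k}\}$, which spans $\Ker(L_0\vert_{V_k})$ by the above, and $\{\z^\alpha:\alpha\in\mathcal{A}_k,\ e(\alpha)\geq m_{\rho+k}\}$, whose span $H_k$ is therefore a complement of $\Ker(L_0\vert_{V_k})$ in $V_k$; passing to $x$-adic completions yields $\mathcal{F}=\Ker(L_0\vert_{\mathcal{F}})\oplus\mathcal{H}$ with $\mathcal{H}$ topologically spanned by the monomials $t^\rho x^k\z^\alpha$, $\alpha\in\mathcal{A}_k$, $e(\alpha)\geq m_{\rho+k}$, as claimed. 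I expect the only genuinely non-formal step to be the injectivity of $\alpha\mapsto\alpha^{-}$, which upgrades the single-monomial content of Lemma~\ref{lem:order} to a no-cancellation statement for sums of monomials; an alternative route bypassing it is a dimension count based on $L_0(\mathcal{F})=\mathcal{F}x$ (Lemma~\ref{lem:surjectivity}), which gives $\dim\Ker(L_0\vert_{V_k})=\#\mathcal{A}_k-\#\mathcal{A}_{k-1}$ (with $\mathcal{A}_{-1}:=\emptyset$), to be matched with $\#\{\alpha\in\mathcal{A}_k:e(\alpha)<m_{\rho+k}\}$ via an explicit bijection.
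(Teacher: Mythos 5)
Your proof is correct, and it is considerably more self-contained than the one in the paper. The paper disposes of the lemma in three lines: the inclusion of the stated monomials in the kernel follows from Lemma~\ref{lem:monomials} together with the fact that $e(\alpha)$ is the nilpotency index of $x\partial$ on $z^\alpha$ (Lemma~\ref{lem:order}), and for the reverse inclusion it simply says ``arguing as in the proof of Proposition~\ref{prop:Euler}'', i.e.\ it reruns the comparison of coefficients of monomials ordered by $e$-value. Your argument uses exactly the same two ingredients but packages them more structurally: you restrict to each $x$-homogeneous piece $V_k$, observe via Lemmata~\ref{lem:monomials} and~\ref{lem:vanishingHasse} that $L_0$ acts there as $D^{m}q(D)$ with $D$ the nilpotent logarithmic part of $x\partial$ and $q(D)$ a unit, and then compute $\Ker(D^{m})$ by filtering by $e$-value. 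The genuine added value is that you isolate and actually prove the no-cancellation statement --- the injectivity of $\alpha\mapsto\alpha^{-}$, hence of the associated graded map $\overline{\delta}$ --- which the coefficient comparison in Proposition~\ref{prop:Euler} uses implicitly but never states: without it, distinct monomials of equal $e$-value could a priori cancel in $(x\partial)f$ and the leading-term induction would collapse; your recovery of $j$ as the least index where $\overline{\gamma}_j\neq p-1$ settles this cleanly. One notational caveat: the endomorphism of $V_k$ induced by $\delta=x\partial$ is $(\rho+k)\Id+D$, so it is $D$ (the operator computed in Lemma~\ref{lem:order}), not $\delta$ itself, that is nilpotent on $V_k$, and correspondingly $\psi(T)=\chi_L(\rho+k+T)$ should be evaluated at $D$. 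Since what you actually use is $L_0\vert_{V_k}=\sum_i \chi_L^{[i]}(\rho+k)D^{i}$, exactly as in Lemma~\ref{lem:monomials}, this does not affect the validity of the argument, but the symbol $\delta$ should be replaced by $D$ from the second paragraph of your proof onward.
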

\begin{proof}
We have seen that $e(\alpha)$ is the least number $k$, such that $(x\partial)^k\z^\alpha=0$. So every monomial $t^\rho x^kz^\alpha$ with $e(\alpha)<m_{\rho+k}$ is in the kernel of $L_0$ according to Lemma \ref{lem:monomials}. Arguing as in the proof of Proposition \ref{prop:Euler} we see that those elements indeed span $\ker L_{0}\vert_\mathcal{F}$.
\end{proof}

%---------------------------------
%             NFT CHAR p
%---------------------------------

Now we are ready to state and prove the normal form theorem.
\begin{thm}[Normal form theorem in positive characteristic ]\label{thm:nft}
Let $\kk$ be an algebraically closed field of characteristic $p$. Let $L\in \kk\osb x\csb[\partial]$ be a differential operator with initial form $L_0$ and shift $\tau=0$ acting on $\mathcal{R}= \bigoplus_{\rho \in \kk}t^\rho \kk(z)\orb x\crb$. Let $\rho$ be a local exponent of $L$ at $0$ and $\mathcal{F}=t^\rho\sum_{k=0}^\infty \bigoplus_{\alpha \in \mathcal{A}_k}\kk \z^\alpha x^k\subset \RR$ the associated $\rho$-function space.
\begin{enumerate}[(i)] 
\item The map $L_0\vert_\mathcal{H}:  \mathcal{H} \to \mathcal{F} x$ is bijective and the composition of its inverse $(L_0\vert_{\mathcal{H}})^{-1}:\mathcal{F} x\to \mathcal{H}$ composed with the inclusion $\mathcal{H}\subseteq \mathcal{F}$ defines a $\mathcal{C}$-linear right inverse $S:\mathcal{F} x\to \mathcal{F}$ of $ L_0$.
\item Let $T=L_0-L:\mathcal{F}\to \mathcal{F} x$. Then the map 
$$
u=\Id_{\mathcal{F}}-S\circ T:\mathcal{F}\to \mathcal{F}
$$
is a continuous $\mathcal{C}$-linear automorphism of $\mathcal{F}$ with inverse $v=\sum_{k=0}^\infty (S\circ T)^k:\mathcal{F}\to \mathcal{F}$.
\item The automorphism $v$ of $\mathcal{F}$ transforms $L$ into $L_0$, i.e.,
$$
L\circ v=L_0.
$$
\end{enumerate}
\end{thm}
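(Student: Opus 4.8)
The plan is to verify the hypotheses of the Perturbation Lemma~\ref{lem:pert} for the map $L_0\colon\mathcal{F}\to\mathcal{F} x$ and the decomposition $L_0 = L_0 - (-T)$, i.e.\ with $\ell = L_0$, $\ell_0 = L_0$, and ``$t$'' in the lemma playing the role of $T$; note that here the roles are reversed compared to characteristic $0$, since it is $L_0$ (not $L$) that we realize as a perturbation. First I would equip $\mathcal{F}$ with the $x$-adic metric $d(f,g) = 2^{-\ord_x(f-g)}$, where $\ord_x$ is the order of vanishing in $x$; since coefficients of each power of $x$ lie in the finite-dimensional $\kk$-space spanned by the $\z^\alpha$ with $\alpha\in\mathcal{A}_k$, the space $\mathcal{F}$ is complete in this metric. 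Part~(i) is essentially already done: Lemma~\ref{lem:surjectivity} gives $L_0(\mathcal{F}) = \mathcal{F} x$, the preceding lemma identifies $\Ker(L_0|_{\mathcal F})$ and exhibits the complement $\mathcal{H}$, so $L_0|_{\mathcal H}\colon\mathcal{H}\to\mathcal{F} x$ is a bijection and $S$ is a well-defined $\mathcal{C}$-linear right inverse with $L_0\circ S = \Id_{\mathcal F x}$.

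For part~(ii), the key point is the contraction estimate. By Lemma~\ref{lem:closed}, $T = L_0 - L$ has positive shift, so $T(\mathcal{F})\subseteq \mathcal{F} x$ and more precisely $\ord_x(T f) \geq \ord_x(f) + 1$ for all $f\in\mathcal{F}$. The operator $S$ maps $\mathcal{F} x$ into $\mathcal{F}$ and, being built from $(L_0|_{\mathcal H})^{-1}$, does not decrease the $x$-order: indeed $L_0$ sends $t^\rho x^k\z^\alpha$ to a combination of monomials $t^\rho x^k\z^\gamma$ (same power of $x$), so its restriction to $\mathcal{H}$ preserves $x$-degree and hence so does $S$. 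Therefore $S\circ T$ strictly increases the $x$-order, giving $|S(T(f))| \leq \tfrac12|f|$ for all $f\in\mathcal{F}$, which is the required estimate with $C = \tfrac12 < 1$. The Perturbation Lemma then yields that $u = \Id_{\mathcal F} - S\circ T$ is a continuous $\mathcal{C}$-linear automorphism (linearity over $\mathcal{C}$ because both $S$ and $T$ are $\mathcal{C}$-linear) with inverse the convergent von Neumann series $v = \sum_{k\geq 0}(S\circ T)^k$.

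For part~(iii), I would simply run the algebraic identity at the end of the proof of Lemma~\ref{lem:pert}, but with the roles of $\ell$ and $\ell_0$ as set up above. Concretely, from $L_0\circ S = \Id_{\mathcal F x}$ one gets $L_0\circ S\circ L_0 = L_0$ on $\mathcal{F}$ (since $L_0(\mathcal{F})\subseteq\mathcal{F} x$), and from $\Im(T)\subseteq\mathcal{F} x = \Im(L_0)$ one gets $L_0\circ S\circ L = L$ (applying $L_0 S$ to $Lf\in\mathcal F x$... here one uses that $L = L_0 - T$ maps $\mathcal F$ into $\mathcal F x$, which holds because both $L_0$ and $T$ do). Then
\[
L_0\circ u = L_0\circ(\Id_{\mathcal F}-S\circ T) = L_0 - L_0\circ S\circ(L_0-L) = L_0 - L_0 + L_0\circ S\circ L = L.
\]
Composing on the right with $v = u^{-1}$ gives $L_0 = L\circ v$, which is the claim.

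The main obstacle, and the only place requiring genuine care, is the contraction estimate in part~(ii): one must be sure that $S$ does not lower $x$-order (so that the single unit of $x$-order gained from $T$ is not lost), and that the chosen metric makes $\mathcal{F}$ complete despite the $z_i$-degrees being unbounded. Both follow from the observations that $L_0$ acts ``diagonally in $x$'' on monomials and that $\mathcal{A}_k$ is finite for each $k$, but these should be stated explicitly. Everything else is formal bookkeeping and an appeal to Lemma~\ref{lem:pert}.
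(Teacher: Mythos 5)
Your proof is correct and follows essentially the same route as the paper: part (i) from Lemma~\ref{lem:surjectivity} and the kernel-complement lemma, and parts (ii)--(iii) by applying the Perturbation Lemma~\ref{lem:pert} with the $x$-adic metric, using that $T$ strictly raises the $x$-order while $S$ preserves it (your explicit remarks on completeness via the finiteness of $\mathcal{A}_k$ and on why $S$ is order-preserving are welcome additions the paper leaves implicit). The only blemish is your opening framing: the decomposition is $L = L_0 - T$ with $\ell = L$, $\ell_0 = L_0$, $t = T$, exactly as in characteristic $0$ (there is no reversal of roles, and ``$L_0 = L_0 - (-T)$'' is a misprint for this), as your own computation $L_0\circ u = L$, hence $L\circ v = L_0$, in part (iii) --- which is precisely the identity established inside the proof of Lemma~\ref{lem:pert} --- confirms.
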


\begin{proof} For (i) note that by Proposition \ref{lem:surjectivity} the map $L_0:\mathcal{F}\to \mathcal{F} x$ is surjective and thus the restriction to a direct complement of its kernel is bijective. Clearly $S$ then defines a right inverse of $L_0$. One easily checks that the construction of preimages of $L_0$ mentioned in Remark \ref{rem:constructive} is $\mathcal{C}$-linear.

The assertions (ii) and (iii) are an application of the Perturbation Lemma~\ref{lem:pert}. We view elements of $\mathcal{F}$ as power series in $x$ and equip $\mathcal{F}$ with the $x$-adic topology, which turns it into a complete metric space. The operator $T$ has positive shift by definition and thus increases the order in $x$ of a monomial $t^\rho x^kz^\alpha$ and thus of any element of $\mathcal{F}$. The operator $S$ maintains the order in $x$ of a monomial as $L_0$ does so. So the composition $S\circ T$ increases the order of any element. Furthermore, $T$ maps $\mathcal{F}$ to $\mathcal{F} x=\Im (L_0)$. So we may apply the perturbation lemma and the claim follows.
\end{proof}

\subsection{Solutions of regular singular equations}\label{subsec:solp}
The normal form theorem allows us to describe all solutions of differential equations with regular singularities.
\begin{thm} \label{thm:solp}
Let $L\in \kk\osb x\csb [\partial]$  be a linear differential operator with regular singularity at $0$ acting on $\mathcal{R}$. Let $\rho\in \kk	$ be a local exponent of $L$. Denote by $u_\rho:\mathcal{F}^\rho_L\to \mathcal{F}^\rho_L$ the automorphism associated to $\rho$ given in (ii) of the normal form theorem. The solutions of the differential equation $Ly=0$ in $\mathcal{R}$ form an $n$-dimensional $\mathcal{C}$-vector space. A basis is given by
$$
y_{\rho,i}=u_\rho^{-1}(t^\rho z^{i^*}),
$$
where $\rho$ varies over the local exponents of $L$ at $0$ and $0\leq i< m_\rho$, with $i^*=(i, \lfloor i/p\rfloor, \lfloor i/p^2\rfloor,\ldots)$.
\end{thm}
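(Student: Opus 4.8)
The plan is to transport the explicit basis of the Euler equation $L_0y=0$ obtained in Proposition~\ref{prop:Euler} across the conjugation furnished by the normal form theorem, and then to check that the resulting family loses nothing: it remains $\mathcal{C}$-linearly independent and it already captures every solution of $Ly=0$ in $\mathcal{R}$. For the first part I would fix a local exponent $\rho$ and invoke Theorem~\ref{thm:nft}: the map $v=u_\rho^{-1}:\mathcal{F}^\rho_L\to\mathcal{F}^\rho_L$ is an automorphism with $L\circ v=L_0$. Since $e(i^*)=i<m_\rho=m_{\rho+0}$, the monomial $t^\rho z^{i^*}$ (of order $0$ in $x$) lies in $\Ker(L_0\vert_{\mathcal{F}^\rho_L})$ by the kernel lemma preceding Theorem~\ref{thm:nft}, so $y_{\rho,i}=v(t^\rho z^{i^*})$ solves $Ly_{\rho,i}=L_0(t^\rho z^{i^*})=0$. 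Because $v=\Id+(S\circ T)+(S\circ T)^2+\cdots$ strictly raises the order in $x$, one has $y_{\rho,i}=t^\rho z^{i^*}+(\text{terms of order}\ge 1\ \text{in}\ x)$; this leading-term information is what drives the two remaining steps.

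Next I would prove $\mathcal{C}$-linear independence. As $L$ and $L_0$ preserve the $t$-grading of $\mathcal{R}$ and multiplication by an element of $\mathcal{C}$ only shifts the $t$-exponent within a fixed $\F_p$-coset, a putative relation $\sum c_{\rho,i}y_{\rho,i}=0$ splits into one relation per $\F_p$-coset of local exponents. Within one coset I would clear denominators using $\mathcal{C}=\Quot\,\mathcal{D}$, pass to coefficients in $\mathcal{D}=\bigoplus_{\tau\in\F_p}t^\tau x^{p-\tau}\kk(z^p)\osb x^p\csb$, and compare the homogeneous parts of lowest degree in $x$; by the leading-term description from the previous step this reduces to a $\kk$-linear relation among the distinct monomials $t^\rho z^{i^*}$, which is handled exactly as in the last paragraph of the proof of Proposition~\ref{prop:Euler} (the multi-indices $i^*$, $0\le i<m_\rho$, being pairwise non-congruent mod $p$), forcing all coefficients to vanish.

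The main work, and the step I expect to be the real obstacle, is showing that the $y_{\rho,i}$ \emph{span} the solution space: one cannot appeal to Wronski's lemma to bound the dimension, since $\mathcal{R}$ is not a domain, so the solutions must be extracted coefficient by coefficient while controlling both the denominators in the $z_i$ and the spreading of the $t$-exponent over an $\F_p$-coset. Since $L$ preserves each coset part $\mathcal{R}_\sigma=\bigoplus_{\tau\in\sigma+\F_p}t^\tau\kk(z)\orb x\crb$, which is also $\mathcal{C}$-stable and contains all $y_{\rho,i}$ with $\rho$ in that coset, it suffices to express a solution $y\in\mathcal{R}_\sigma$ through them. Looking at its lowest-order-in-$x$ homogeneous part $\sum_\tau t^\tau g_\tau x^{k_0}$, the lowest part of $Ly=0$ forces $L_0(t^\tau g_\tau x^{k_0})=0$ for each $\tau$; by Lemma~\ref{lem:monomials} and the kernel computation in the proof of Proposition~\ref{prop:Euler}, $\rho:=\tau+k_0$ must then be a local exponent, and — after absorbing the $p$-th power denominator of $g_\tau$ into $\mathcal{C}$, and using that every monomial $z^\alpha$ with $e(\alpha)<m_\rho$ equals $z^{i^*}$ (for a unique $i<m_\rho$) times a $p$-th power monomial, hence times a constant of $\mathcal{R}$ — one sees that $t^\tau g_\tau x^{k_0}$ is a $\mathcal{C}$-combination of the $t^\rho z^{i^*}$. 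Subtracting the corresponding $\mathcal{C}$-combination of the $y_{\rho,i}$, whose lowest parts agree with those of $y$, produces a solution of strictly larger order in $x$; iterating and using that $\mathcal{C}$ is complete for the $x$-adic topology (its $x^p$-component being a Laurent series ring), the coefficients generated at successive stages converge in $\mathcal{C}$, and since only the finitely many local exponents lying in the given coset ever occur, $y$ emerges as a \emph{finite} $\mathcal{C}$-combination of the $y_{\rho,i}$; a coset containing no local exponent supports no nonzero solution by the same leading-term obstruction. Combining the three steps, the $y_{\rho,i}$ form a $\mathcal{C}$-basis, and the dimension equals $\sum_\rho m_\rho=\deg\chi_L=n$ by regularity of the singularity at $0$.
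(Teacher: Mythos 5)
Your proposal is correct and follows essentially the same route as the paper: transport the Euler basis of Proposition~\ref{prop:Euler} through $u_\rho^{-1}$ using $L\circ u_\rho^{-1}=L_0$, prove spanning by peeling off the lowest-order-in-$x$ part of an arbitrary solution (which must be an Euler solution), subtracting the matching $\mathcal{C}$-combination of the $y_{\rho,i}$ and iterating $x$-adically, and reduce linear independence to the Euler case. The extra care you take with the $\F_p$-coset decomposition and the convergence of the iteratively produced constants only makes explicit what the paper leaves implicit.
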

\begin{proof}
By the normal form theorem and the description of the solutions of Euler equations (Proposition \ref{prop:Euler}), we have
$$
L(y_{\rho,i})=L\circ u_\rho^{-1}(t^\rho z^{i^*})=L_0(t^\rho z^{i^*})=0,
$$
so these functions clearly are solutions of the differential equation $Ly=0$. 
Let now $y$ be any solution of $Ly=0$. Again, as $L$ commutes with the direct sum decomposition of 
$$
\mathcal{R}=\bigoplus_{\rho \in \kk}t^\rho \kk(\z)\orb x\crb
$$ 
and upon multiplication with constants of the form $x^{kp}$ we may assume that $y$ is of the form 
$y=t^\rho\left(\sum_{k=0}^\infty f_k(\z)x^k\right)
$
for $f_k\in \kk(\z)$.
If we write $L=L_0-T$ we obtain
$$
L_0y-Ty=0,
$$
where $T$ has positive shift, i.e., it strictly increases the order in $x$. Thus, $t^\rho f_0(\z)$ is a solution to the Euler equation $Ly=0$ and therefore 
$$t^\rho f_0(\z)=\sum_{(\sigma, i)}c_{\sigma, i}t^\sigma\z^{\alpha_i},$$
where $\sigma$ varies over the local exponents, $0\leq i<m_\sigma$, and $c_{\sigma, i}\in \mathcal{C}$ is homogeneous of order $0$ in $x$.
We compute
$$
L\left(y-\sum_{(\sigma, i)}c_{\sigma, i}y_{\rho, i}\right)=L\left(-\sum_{(\sigma, i)}c_{\sigma, i}u_\sigma^{-1}(t^\sigma\z^{\alpha_i})\right)=-\sum_{(\sigma, i)}c_{\sigma, i}L(u_\sigma^{-1}(t^\rho\z^{i^*}))=0.
$$
Note that for all $f\in \mathcal{F}$ we have ${\rm ord}_x (f-u(f))>{\rm ord}_x f$, i.e., the monomial of order $0$ remains unchanged under $u$. So $y-\sum_{(\sigma, i)}c_{\sigma, i}y_{\sigma, i}$ has positive order in $x$. Iteration yields constants $d_{\sigma, i}\in \mathcal{C}$ with $y=\sum_{(\sigma, i)}d_{\sigma, i}y_{\sigma,i}$. Thus, $y$ is a linear combination of $y_{\sigma, i}$. Conversely, any such linear combination is a solution of $Ly=0$. 

The linear independence of the solutions $y_{\rho, i}$ can be reduced to the linear independence of the solutions of the Euler equation, which was proven in Proposition \ref{prop:Euler}.

This proves that the solutions of $Ly=0$ in $\mathcal{R}$ form an $n$-dimensional $\mathcal{C}$-vector space with basis $y_{\rho,i}$, where $\rho$ varies over the local exponents and $0\leq i <m_\rho$.

\end{proof}

\begin{rem} \label{rem:notalgclosed}
We have assumed for convenience that our field $\kk$ is algebraically closed. If this is not the case, e.g., in the case of a finite field $\F_p$, there is no need to pass to the entire algebraic closure. In the constructions involved in the normal form theorem for an operator $L$ one has to find the roots of the characteristic polynomial $\chi_L\in \kk[s]$, the local exponents $\rho$. Further we have to evaluate the characteristic polynomial at the values $\rho+k$ for elements $k$ of the prime field of $\kk$. Thus, if $\chi_L$ splits over $\kk$ the normal form theorem works without problems within $\kk$. Otherwise it is sufficient to pass to a splitting field of $\chi_L$ to describe a full basis of solutions.
\end{rem}

\begin{rem} 
(i) The space $\mathcal{R}$ provides us with $n$ linearly independent solutions for any operator with a regular singularity at $0$ in characteristic $p$. It is minimal in the following sense: we only introduce a new variable $z_i$ whenever the algorithm constructing solutions forces us to do so, i.e., when we have to divide by $p$. It is possible to choose a system of representatives $\Lambda\subseteq\kk$ of the set $\kk/\F_p$ of residue classes and to then define
$$
\widetilde{\mathcal{R}}:=\bigoplus_{\rho \in \Lambda}t^\rho \kk(\z)\orb x\crb.
$$
It suffices to construct solutions in $\widetilde\RR$ of any linear differential equation $Ly=0$ having a regular singularity at $0$, similarly as above. For example, if $\sigma\in \kk$ is a local exponent of an Euler operator and there is $\rho\in \Lambda$ with $\rho+k=\sigma$ for some $\sigma \in \F_p$ and $k\in \F_p$, then $t^\rho x^k$ is a solution of the equation $Ly=0$. This construction has the advantage that the constants are much simpler, as they are given by the elements of
$$
\mathcal{C}_{\widetilde{\mathcal{R}}}= \kk(\z^p)\orb x^p\crb.
$$
However, this procedure requires a choice of a system of representatives of $\kk/\F_p$.

(ii) In characteristic $0$ a minimal extension of $\kk\orb x\crb$ in which every regular singular equation has a full basis of solutions is the universal Picard-Vessiot ring or field for differential equations with regular singularities, discussed in \cite{SvdP03}.
\end{rem}

\section{Examples and applications in characteristic $p$} \label{sec:appp}
\subsection{Examples}\label{subsec:exp}
\begin{ex}[Exponential function in characteristic $3$] \label{ex:ex}
We consider the equation $y'=y$. Solving over the holomorphic functions, or in $\C\osb x\csb$ one obtains the exponential function as a solution. However there is no reduction of this function modulo any prime, as all prime numbers appear in the denominators of the expansion of the exponential function. But one can obtain solutions modulo $p$  for any prime in $\mathcal{R}$ using the normal form theorem. Pick for example $p=3$. Write $L=x\partial-x=\delta-x$, so our equation is equivalent to $Ly=0$. The only local exponent of the equation is $0$, thus one needs to compute the series 
$$
\sum_{n=0}^\infty (S\circ T)^n(1).
$$
The operator $T$ is simply given by the multiplication by $x$, where $S$ is, as constructed above, a right-inverse of $L_0=x\partial$. One obtains:
\begin{align*}
(S\circ T)^1(1)&=&S(x)&=&x,\\
(S\circ T)^2(1)&=&S(x^2)&=&2x^2,\\
(S\circ T)^3(1)&=&S(2x^3)&=&2x^3z_1,\\
(S\circ T)^4(1)&=&S(2x^4z_1)&=&2x^4z_1+x^4,\\
(S\circ T)^5(1)&=&S(2x^5z_1+x^5)&=&x^5z_1,\\
(S\circ T)^6(1)&=&S(x^6z_1)&=&2x^6z_1^2,\\
(S\circ T)^7(1)&=&S(2x^7z_1^2)&=&2x^7z_1^2+2x^7z_1+x^7,\\
(S\circ T)^8(1)&=&S(2x^8z_1^2+2x^8z_1+x^8)&=&x^8z_1^2+2x^8,\\
(S\circ T)^9(1)&=&S(x^9z_1^2+2x^9)&=&x^9z_1^3z_2+2x^9z_1.
\end{align*}
One gets the solution
$$
1+x+2x^2+2x^3z_1+x^4(1+2z_1)+x^5z_1+2x^6z_1^2+x^7(1+2z_1+2z_1^2)+x^8(2+z_1^2)+x^9(2z_1+z_1^3z_2)+\ldots,
$$
which could be considered as the exponential function in characteristic $3$. Note that obtaining the  rightmost column needs some computational effort. One has to follow the steps described in Remark \ref{rem:constructive}. There seems to be no obvious pattern in the coefficients of the obtained power series.

Similarly, one can compute the exponential functions $\exp_p$ for other characteristics $p$. For $p=2$ the first terms are
$$
1+x+x^2z_1+x^3(z_1+1)+x^4(z_1^2z_2+z_1)+x^5z_1^2z_2+x^6(z_1^3z_2+z_1^3)+x^7(z_1^3z_2+z_1^2z_2+z_1^3+z_1+1)+\ldots
$$

and for $p=5$ we get
$$
1+x+3x^2+x^3+4x^4+4x^5z_1+x^6(4z_1+1)+x^7(2z_1+2)+x^8(4z_1+1)+x^9z_1+3x^{10}z_1^2+\ldots
$$

The series $\exp_p$ seems to have some remarkable properties. Let us considers the constant term in $z$, i.e. $\exp_p(x,0,0,\ldots)$. Computations suggest that $y=\exp_3(x,0,0,\ldots)$ satisfies 
\[x^3y^3 + xy^2 - y + 1=0\] and $y=\exp_5(x,0,0,\ldots)$ satisfies \[x^{10}y^5 + x^6y^4 + x^4y^3 - x^3y^3 + 2x^2y^2 + 2xy^2 - 2y + 2=0,\]
i.e. these series seem to be algebraic over $\F_p(x)$. Similar observations were made for other characteristics as well. This motivates the following challenge:
\end{ex}
\begin{problem} \label{prob:alg}
Let $L\in \F_p[x][\partial]$ be a differential operator and $\rho\in \F_p$ a local exponent of $L$. Let $u$ be the automorphism described in the normal form theorem in positive characteristic, Theorem \ref{thm:nft}. Determine the cases where $(u^{-1}(x^\rho))_{\vert z_i=0}$ is algebraic over $\F_p(x)$.
\end{problem}

This problem will be addressed in future work of the authors and Kawanoue and answered for equations of order $1$. In the next example, the answer is immediate.

\begin{ex} \label{ex:log(1-x)}
We consider the minimal complex differential equation $Ly=0$ for 
$$y(x)=-\log(1-x)=x+\frac{x^2}{2}+\frac{x^3}{3}+\ldots\in \C\osb x\csb.$$
It is given by $L=x^2\partial^2-(x^2\partial+x^3\partial^2)$. The local exponents are $0,1$ and a basis of solutions in $\C\osb x\csb$ is given by $\{1, y\}$. Reducing $L$ modulo a prime number $p$ one again finds the local exponents $0,1$. Clearly $y_{0,0}=u_0^{-1}(1)=1$. Further we compute
$$
y_{1,0}=u_1^{-1}(t^1)=\sum_{k=0}^\infty (S\circ T)(t^1)= t\left(1+\frac{x}{2}+\frac{x^2}{3}+\ldots +\frac{x^{p-2}}{p-1}+x^{p-1}z_1\right).
$$ 
Here only adjoining the variable $z_1$ instead of countably many $z_i$ is necessary to obtain enough solutions. In the next section we will describe the class of operators, where the addition of finitely many of the variables $z_i$ suffice.
\end{ex}

\subsection{Special cases} \label{subsec:specialp}

{\bf Equations with local exponents in the prime field.} The situation becomes much easier if we consider a linear differential equation $Ly=0$, whose local exponents are all contained in the prime field $\F_p\subseteq \kk$. In this case there is no need to introduce monomials $t^\rho$ with exponents $\rho\in \kk$. We define the differential subfield $\mathcal{K}$ of $\mathcal{R}$ as
$$
\mathcal{K}:=\kk(\z)\orb x\crb.
$$
One easily checks that $\mathcal{K}$ is indeed differentially closed with respect to $\6_\mathcal{R}$. Moreover, its field of constants is given by
$$\mathcal{C}_\mathcal{K}=\kk(\z^p)\orb x^p\crb.$$
The assumption on the local exponents allows one to modify the normal form theorem to use the function space 
$$\mathcal{G}^\rho:=x^\rho\sum_{k=0}^\infty \bigoplus_{\alpha \in \mathcal{A}_k}\kk \z^\alpha x^k,$$ 
instead of 
$$\mathcal{F}^\rho=t^\rho\sum_{k=0}^\infty \bigoplus_{\alpha \in \mathcal{A}_k}\kk \z^\alpha x^k,$$ 
by ``substituting $t=x$'' and analogously one obtains a full basis of solutions over $\mathcal{C}_\mathcal{K}$ in $\mathcal{K}$: For each local exponent $\rho$ one computes $u^{-1}(x^\rho)$ instead of $u^{-1}(t^\rho)$, where $u$ is the automorphism described in the normal form theorem.

{\bf Polynomial solutions.} It is well-known that if a Laurent series solution $y\in \F_p\orb x\crb$ to $Ly=0$ for an operator $L\in \F_p[x][\partial]$ with polynomial coefficients exists, then there already exists a polynomial solution to the equation, see \cite{Honda81} p.~174.  We generalize the result to solutions involving only finitely many of the variables $z_i$.

\begin{lem}\label{lem:poly}
{\it Let $\kk$ be a field of characteristic $p$. Let $L\in \kk[x][\partial]$ be a differential operator with local exponent $\rho\in \kk$. Let $y\in t^\rho \kk[z_1,\ldots, z_\ell]\osb x\csb$ be a solution of the differential equation $Ly=0$ involving only finitely many of the variables $z_i$. Let $c\in \N$. Then there exists a polynomial $q\in \kk[x, z_1,\ldots, z_\ell]$, such that $L (t^\rho q)=0$ and $y-t^\rho q\in t^\rho x^{c+1}\kk[z_1,\ldots, z_\ell]\osb x\csb$.} In particular, if a basis of power series solutions of $Ly=0$ in $\bigoplus_\rho t^\rho\kk[z_1, \ldots, z_\ell]\osb x\csb$ exists, then there already exists a basis of polynomial solutions in $\bigoplus_\rho t^\rho\kk[z_1, \ldots, z_\ell, x]$.
\end{lem}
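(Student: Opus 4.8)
The plan is to extend Honda's passage from power series to polynomial solutions (\cite{Honda81}, p.~174), the new feature being that one must also control the finitely many logarithmic variables $z_1,\dots,z_\ell$ occurring in $y$. Since $L\in\kk[x][\partial]$ has polynomial coefficients, it is a \emph{finite} sum $L=L_0+L_1+\dots+L_N$ of Euler operators, $L_i$ of shift $i$. Writing $y=t^\rho\sum_{k\ge0}f_kx^k$ with $f_k\in\kk[z_1,\dots,z_\ell]$, Lemma~\ref{lem:monomials} applied to the homogeneous components gives $L_i(t^\rho f_kx^k)=t^\rho x^{k+i}E_{i,m}(f_k)$ with $m=k+i$, where $E_{i,m}\colon\kk[z_1,\dots,z_\ell]\to\kk[z_1,\dots,z_\ell]$ is the $\kk$-linear operator built from the Hasse derivatives of the indicial polynomial of $L_i$ evaluated at $\rho+m$ and from iterates of $\delta=x\partial$; by Lemma~\ref{lem:order} it is a finite sum, and, crucially, $E_{i,m}$ depends on $m$ only through $m\bmod p$. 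Collecting coefficients of $x^m$, the equation $Ly=0$ reads $\sum_{i=0}^{\min(N,m)}E_{i,m}(f_{m-i})=0$ for every $m$. First I would record the \emph{truncation estimate}: the polynomial $q_M:=\sum_{k=0}^Mf_kx^k$ satisfies $L(t^\rho q_M)=t^\rho D_M$, where $D_M$ is supported in $x$-degrees $M+1,\dots,M+N$, its coefficient of $x^{M+j}$ being $-\sum_{i=0}^{j-1}E_{i,M+j}(f_{M+j-i})$; in particular $D_M$ depends only on the block $F_M:=(f_{M+1},\dots,f_{M+N})$ and on $M\bmod p$, through a fixed $\kk$-linear rule. (At degrees $\le M$ the truncation obeys exactly the relations coming from $Ly=0$, so $D_M$ vanishes there, and $\deg_xq_M=M$ bounds $D_M$ above.)

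The conclusion then follows, modulo the finiteness input discussed below, by a pigeonhole argument combined with the observation that a power of $x$ divisible by $p$ is a \emph{constant} of $\mathcal{R}$. Assume the blocks $F_M$, for $M$ in the residue class of $c$ modulo $p$ with $M\ge c$, all lie in one finite-dimensional $\kk$-subspace of $\kk[z_1,\dots,z_\ell]^N$. Then some value is attained infinitely often, so we may pick $M_1<M_2$ with $M_1\equiv M_2\equiv c\pmod p$, $M_2-M_1>c$, and $F_{M_1}=F_{M_2}$. Since $E_{i,M_1+j}=E_{i,M_2+j}$ and the entries of $F_{M_1}$ and $F_{M_2}$ coincide, one gets $D_{M_2}=x^{M_2-M_1}D_{M_1}$. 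As $M_2-M_1\equiv0\pmod p$, the monomial $x^{M_2-M_1}\in\kk[x^p]$ is a constant of $\mathcal{R}$ and hence commutes with $L$, so
\[
L\bigl(t^\rho(q_{M_2}-x^{M_2-M_1}q_{M_1})\bigr)=t^\rho\bigl(D_{M_2}-x^{M_2-M_1}D_{M_1}\bigr)=0.
\]
Thus $q:=q_{M_2}-x^{M_2-M_1}q_{M_1}\in\kk[x,z_1,\dots,z_\ell]$ satisfies $L(t^\rho q)=0$, and since $x^{M_2-M_1}q_{M_1}$ has $x$-order $\ge M_2-M_1>c$ while $q_{M_2}\equiv y/t^\rho\pmod{x^{M_2+1}}$, we obtain $y-t^\rho q\in t^\rho x^{c+1}\kk[z_1,\dots,z_\ell]\osb x\csb$, as required.

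The main obstacle is precisely the finiteness input: I must rule out that the blocks $F_M$ escape to infinity in $\kk$-dimension (for instance because $\deg_zf_k\to\infty$) as $M$ runs through a residue class mod $p$. This is where the hypothesis that $y$ involves only $z_1,\dots,z_\ell$ is decisive; I expect to derive the bound either from the $e$-graded structure of $\kk[z_1,\dots,z_\ell]$ under $\delta$ (Lemma~\ref{lem:order}) propagated along the recursion, or from the fact that the solution space is finite-dimensional over $\mathcal{C}$ (Theorem~\ref{thm:solp}) intersected with the ``polynomial in $z$'' condition. A secondary delicacy lies at the resonant steps, where $\rho+m$ is a root of the indicial polynomial and $E_{0,m}$ fails to be invertible; there I would invoke the normal form theorem (Theorem~\ref{thm:nft}), which pins down the admissible continuations of the recursion.

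Finally, the ``in particular'' clause is formal: apply the first part to each member $y^{(a)}\in t^{\rho_a}\kk[z_1,\dots,z_\ell]\osb x\csb$ of the given basis, choosing $c$ large enough that truncation at order $x^{c+1}$ preserves $\mathcal{C}$-linear independence of the finitely many solutions (possible since the relevant Wronskian-type minor is already nonzero at some finite $x$-order). The resulting polynomials $t^{\rho_a}q^{(a)}\in t^{\rho_a}\kk[z_1,\dots,z_\ell,x]$ are again $\mathcal{C}$-linearly independent solutions of $Ly=0$, hence a basis of polynomial solutions.
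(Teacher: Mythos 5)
Your argument is a generalization of Honda's truncation-and-periodicity proof, which the paper explicitly mentions ``can be easily adapted'' before opting for a different, more conceptual route. As written, however, it has two genuine gaps. First, the pigeonhole step fails: from the blocks $F_M$ lying in a finite-\emph{dimensional} $\kk$-subspace you cannot conclude that ``some value is attained infinitely often'' unless the value set is actually \emph{finite}, and the lemma is stated for an arbitrary field $\kk$ of characteristic $p$ (e.g.\ $\overline{\F_p}$ or $\F_p(s)$), over which even a one-dimensional subspace has infinitely many elements. The paper's Remark immediately following the lemma runs precisely your periodicity argument, but only for \emph{finite} ground fields, where tuples of coefficients really do repeat. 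Second, the finiteness input you flag as the ``main obstacle'' is indeed unproven and not obvious: nothing in the hypothesis bounds $\deg_z f_k$ as $k\to\infty$ (the paper remarks elsewhere that elements of $\mathcal{R}$ may have unbounded $z$-degree), and your two candidate fixes are only sketched. So the proof does not close.

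For comparison, the paper's proof avoids both issues entirely. It regards $t^\rho\kk[z_1,\ldots,z_\ell]\osb x\csb$ as a free module of rank $p^{\ell+1}$ over the constants $\kk[z_1^p,\ldots,z_\ell^p]\osb x^p\csb$ with monomial basis $\mathcal{G}=\{x^kz^\alpha\}$, writes $y=\sum_{g\in\mathcal{G}}y_g\,g$ with constant coefficients $y_g$, and observes that $Ly=\sum_g y_g L(g)=0$ is a syzygy, with power-series coefficients, among the \emph{finitely many} polynomials $L(g)$. Flatness of $\kk[z^p]\osb x^p\csb$ over $\kk[z^p,x^p]$ then yields polynomial syzygies $(q_g)$ approximating $(y_g)$ to any prescribed order, and $q=\sum_g q_g\,g$ is the desired polynomial solution. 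This works uniformly over any field of characteristic $p$ and needs no degree bounds on the coefficients. If you want to salvage your approach, you would have to restrict to finite $\kk$ (recovering the paper's remark) or supply a genuinely new finiteness argument; otherwise the flatness argument is the way to go.
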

The proof of Honda can be easily adapted to this generalisation. However, we give a more conceptual proof.

\begin{proof}
We consider $t^\rho \kk[z_1,\ldots, z_\ell] \osb x\csb$ as a free $\kk[z_1^p,\ldots, z_\ell^p]\osb x^p\csb$-module of rank $p^{\ell+1}$ with basis $\mathcal{G}=\{t^\rho x^kz^\alpha| k\in \{0,1,\ldots, p-1\}, \alpha\in \{0,1,\ldots, p-1\}^{\ell}\}$. Without loss of generality assume that $\rho=0$. We can write
$$y(x)=\sum_{g\in \mathcal{G}}y_g(z_1^p,\ldots, z_\ell^p, x^p)g$$ with series $y_g \in \kk[z_1,\ldots, z_\ell]\osb x\csb$. Then 
$$
Ly=\sum_{g\in \mathcal{G}}y_g(z_1^p,\ldots, z_\ell^p, x^p)L(g)=0
$$
implies that the series $y_g(z_1^p,\ldots, z_\ell^p, x^p)$ form a $\kk[z_1^p,\ldots, z_\ell^p]\osb x^p\csb$-linear relation between the polynomials $L(g)$ in the finite free $\kk[z_1^p,\ldots, z_\ell^p, x^p]$-module $\kk[z_1,\ldots, z_\ell, x]$ for $g\in \mathcal{G}$. By the flatness of $\kk[z_1^p,\ldots, z_\ell^p]\osb x^p\csb$ over $\kk[z_1^p,\ldots, z_\ell^p,x^p]$ there are polynomials $q_g(z_1^p,\ldots, z_\ell^p, x^p)\in \kk[z_1^p,\ldots, z_\ell^p, x^p]$ approximating $y_g(z_1^p,\ldots, z_\ell^p, x^p)$ up to any prescribed degree and such that
$$
\sum_{g\in \mathcal{G}}q_g(z_1^p,\ldots, z_\ell^p, x^p)L(g)=0. 
$$
Now set 
$$
q(z_1,\ldots, z_\ell, x)=\sum_{g\in \mathcal{G}}q_g(z_1^p,\ldots, z_\ell^p, x^p)g
$$
to get the required polynomial solution of $Ly=0$. 
\end{proof}

\begin{rem}
(i)  Assume that $L\in \kk[x][\partial]$, where $\kk$ is a {\it finite} field of characteristic $p$ with algebraic closure $\overline{\kk}$. Then if  $y\in t^\rho \overline{\kk}\osb x\csb$ is a solution obtained by the normal form theorem, we already have  $y\in t^\rho \kk(\rho)\osb x\csb$, where $\kk(\rho)$ is a finite extension of $\kk$. Recall the operators $S$ and $T$ from the normal form theorem: $S$ is a right inverse to $L_0$ and $T=L-L_0$. It holds $S(x^{\rho+k+p})=x^pS(x^{\rho +k})$ and $T(x^{\rho+k+p})=x^pT(x^{\rho+k})$. There are only finitely many $n$-tuples of elements from  $\kk(\rho)$. Write $y=t^\rho(a_0+a_1+a_2x^2+\ldots)$. Two $n$-tuples of consecutive coefficients $a_i$ of $y$, starting at powers of an index divisible by $p$, have to agree. Thus the sequence $(a_i)_{i\in \N}$ becomes periodic. Hence it suffices to take a suitable sufficiently large $k$ to obtain a polynomial solution 
$(1-x^{kp})y$ of $Ly=0$, which approximates $y$ to a prescribed degree $c$.
 
(ii) The algorithm from the normal form theorem may but need not provide us with a polynomial solution of $Ly=0$, when applied to an operator $L$ in $\F_p[x][\partial]$. To see this consider the following two examples:

(a) Let $L=x\partial-x^2\partial-x$ and $$y_L(x)=\frac{1}{1-x}$$ the solution of the equation $Ly=0$. Over $\F_p$ we compute using the algorithm from the normal form theorem with $L_0=x\partial$ and $T=x^2\partial +x$ and obtain $u^{-1}(1)=\sum_{k=0}^\infty (S_L \circ T_L)^k = 1+x+x^2+\ldots +x^{p-1}\in \F_p[x]$, a polynomial solution.

So we obtain $u^{-1}(1)=\sum_{k=0}^\infty (S_L \circ T_L)^k = 1+x+x^2+\ldots+ x^{p-1}\in \F_p[x]$, a polynomial solution.

(b) Let now $M=(-x-2x^4)+(x+x^2+-2x^4-x^5+x^7)\partial$. The equation $My=0$ is satisfied by the algebraic function $1+\frac{x}{1-x^3}$. Reducing modulo $3$ we get $$T=(x+2x^2\partial)+(2x^4\partial) + (2x^4+x^5\partial)+ (2x^7\partial)=T_1+T_3+T_4+T_6$$ and the initial form $M_0=x\partial$. We compute the solution 
$$
u^{-1}(1)=\sum_{i=0}^\infty a_ix^i\sum_{i=0}^\infty (S\circ T)^i(1)=1+x+x^4+x^7+x^{10}+\ldots
$$

Because the maximal shift of $T$ is $6$ and $(a_1, a_2, a_3, a_4, a_5, a_6)=(a_4, a_5, a_6, a_7, a_8, a_9)$ the sequence of coefficients of this series becomes periodic, as described in (i), with period length $3$.  Thus, the solution obtained by the normal form theorem in characteristic $p$ agrees with the reduction modulo $p$ of the solution obtained in characteristic $0$. 

(iii) The latter of the two examples from above illustrates that the degree of a minimal degree polynomial solution of a differential equation in characteristic $p$ need not be $p-1$, as one could expect. Indeed using the periodicity of the coefficients of the solution from above one obtains that $$\widehat y(x)= u^{-1}(1)-x^3u^{-1}(1)=1+x-x^3$$ is a polynomial solution. Any other polynomial solution has to be a multiple of $\widehat y$ with a constant. Indeed, making the ansatz 
$$
(1+x-x^3)\cdot(1+c_1x^3+c_2x^6+\cdots)=1+ax+bx^2
$$
one immediately obtains $c_1=1$, which leads to a contradiction. Therefore no polynomial solution of degree less than $3$ exists.
\end{rem}

{\bf The $p$-curvature.} Let $L$ be a differential operator. We define the {\it $p$-curvature} of $L$ to be the action of multiplication by $\partial^p$ on the space $\kk[x][\partial]/\kk[x][\partial]L$.

{\bf Operators with nilpotent $p$-curavture.} One class of operators with all local exponents in the prime field of $\kk$ turn out to be operators with nilpotent $p$-curvature.  An alternative description of these operators was provided by Honda \cite{Honda81} p.~176: We say that an equation $Ly=0$ of order $n$ has {\it sufficiently many solutions in the weak sense} if $Ly=0$ has one solution $y_1\in \kk\osb x\csb$ and recursively the equation in $u'$ of order $n-1$ obtained from $Ly=0$ by the ansatz $y=y_1u$ has sufficiently many solutions in the weak sense.
\begin{thm}[Honda, \cite{Honda81}, p.~201] \label{thm:sufficiently}
A linear differential operator $L\in \kk[x][\partial]$ has nilpotent $p$-curvature if and only if the equation $Ly=0$ has sufficiently many solutions in the weak sense.
\end{thm}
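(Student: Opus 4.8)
Since this is Honda's theorem, I would follow his route, phrased through the dictionary between the operator $L$ and the cyclic module $M_L=\kk[x][\partial]/\kk[x][\partial]L$ over $\kk[x][\partial]$, on which the $p$-curvature $\psi$ (multiplication by $\partial^p$) is, by a standard property of $p$-curvatures, a $\kk[x]$-linear endomorphism that is \emph{horizontal}, i.e. commutes with the action of $\partial$. Consequently $\Ker\psi$ and $\Im\psi$ are again sub-$\kk[x][\partial]$-modules of $M_L$, and nilpotency of $\psi$ is inherited by every sub- and quotient module. The second ingredient is Cartier descent: over $\kk\osb x\csb$ (or over $\kk[x]$ localized at $0$) a differential module has vanishing $p$-curvature if and only if it is trivial, that is, spanned over the constants by its horizontal sections; in rank one this says precisely that the associated first-order equation has a nonzero power series solution, once one invokes — as recorded in the introduction, see \cite{Honda81} — that nilpotence of $\psi$ already forces a regular singularity at $0$ with all local exponents in $\F_p$, so that such a solution $x^\rho u(x)$, $u(0)\ne 0$, is a genuine element of $\kk\osb x\csb$ for the representative $\rho\in\{0,\dots,p-1\}$.

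For the implication ``$\psi$ nilpotent $\Rightarrow$ sufficiently many weak solutions'' I would induct on the order $n$ of $L$. If $\psi$ is nilpotent then so is the $p$-curvature of the dual module $M_L^\vee$; its kernel is a nonzero sub-differential-module on which the $p$-curvature vanishes, hence is trivial by Cartier, and a rank-one trivial submodule of it corresponds under the solution pairing to a power series solution $y_1\in\kk\osb x\csb$ of $Ly=0$. Reduction of order via the substitution $y=y_1u$ turns $Ly=0$ into $\widetilde L u'=0$ with $\widetilde L$ of order $n-1$; on the module side — legitimate at the generic point, or after inverting $y_1$ — this exhibits $M_{\widetilde L}$ as a sub-differential-module of $M_L$, whose $p$-curvature is therefore again nilpotent, so the inductive hypothesis supplies the remainder of the weak-solution tower. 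The base case $n=1$ is exactly the rank-one Cartier statement above.

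For the converse I would unwind the recursive definition to obtain directly a chain $M_L=N_0\supset N_1\supset\dots\supset N_n=0$ of sub-differential-modules in which each quotient $N_{j-1}/N_j$ is trivial of rank one: $N_j$ is the module of the order-$(n-j)$ operator obtained after $j$ successive reductions of order, and its sitting inside $N_{j-1}$ with trivial quotient is the module-theoretic content of one reduction step applied to the $j$-th weak solution. Since $\psi$ is horizontal it preserves this chain and acts as zero on each quotient, whence $\psi^n=0$; this is the easy direction.

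The main difficulty is concentrated in the rank-one/base case, namely in actually producing the \emph{first} power series solution: one must combine the $\F_p$-rationality of the local exponents with Cartier descent to rule out the resonance obstruction — the breakdown of the coefficient recursion at order $p$ that is the very reason the ring $\mathcal{R}$ had to be introduced — and one must check that reduction of order stays compatible with the module picture when $y_1$ vanishes at $0$, since that vanishing is precisely the regular-singular phenomenon and the naive conjugation $\widetilde L=y_1^{-1}Ly_1$ is then only valid after a localization. Tracking the local exponents through this localization, and confirming that nilpotency of the $p$-curvature survives it, is where the genuine work lies.
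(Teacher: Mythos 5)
The paper contains no proof of this statement: it is imported verbatim from Honda with a bare page reference, just like Cartier's lemma, Dwork's theorem and Theorem~\ref{thm:primefield} in the same subsection, so there is no internal argument to measure your proposal against. Judged on its own, your sketch is the standard module-theoretic reconstruction of Honda's result and I find no error in its outline: $\partial^p$ is horizontal and $\kk[x]$-linear, so the $p$-curvature and its nilpotence pass to sub- and quotient $D$-modules; Cartier descent applied to $\Ker\psi$ on the dual module produces the first solution; reduction of order corresponds to the factorization $L=\widetilde L\cdot\partial\cdot y_1^{-1}$ over $\kk(x)[\partial]$ and hence to the exact sequence $0\to D/D\widetilde L\to D/DL\to D/D(\partial y_1^{-1})\to 0$, which drives the induction in one direction and yields the filtration with trivial rank-one quotients (on which $\partial^p$ acts by zero) in the other.

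Two remarks on where your emphasis is slightly off. First, the base case is less delicate than you fear: Cartier descent over $\kk(x)$ already produces a nonzero \emph{rational} solution, and since $x^{pk}$ is a constant of the derivation, multiplying by a suitable such power removes any pole at the origin and places the solution in $\kk\osb x\csb$. No coefficient recursion is ever solved, so the resonance obstruction at order $p$ never arises, and the regular-singularity and $\F_p$-rationality statements are not logically needed for this step --- invoking them is in fact mildly risky, since in Honda's paper they are established in the same circle of ideas and one should check the argument does not become circular. Second, the one assertion you leave unverified that actually carries the inductive step is the operator identity $L\circ(y_1\cdot)=\widetilde L\circ\partial$ and the resulting embedding $D/D\widetilde L\hookrightarrow D/DL$, $\overline{a}\mapsto\overline{a\,\partial\,y_1^{-1}}$; writing this out (it is a two-line computation with the Leibniz rule plus the observation that the coefficient of $u$ in $L(y_1u)$ is $Ly_1=0$) would close the only real gap in your sketch.
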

Indeed, the following theorem holds:
\begin{thm} \label{thm:primefield}
Let $L\in \kk[x][\partial]$ be a differential operator with nilpotent $p$-curvature. Then its local exponents are in the prime field $\F_p\subseteq \kk$.
\end{thm}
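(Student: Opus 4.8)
The plan is to prove, by induction on the order $n$, the following slightly more flexible statement, and then feed it with Honda's criterion: \emph{if $M$ is a linear operator with coefficients in $\kk\osb x\csb$, shift $\tau=0$, and a regular singularity at $0$ (so $\deg\chi_M=n$), and if $My=0$ has sufficiently many solutions in the weak sense, then all local exponents of $M$ at $0$ lie in $\F_p$.} Granting this, Theorem~\ref{thm:primefield} follows at once: by Theorem~\ref{thm:sufficiently} nilpotence of the $p$-curvature makes $Ly=0$ have sufficiently many weak solutions, it also forces a regular singularity at $0$ (\cite{Honda81}; ordinary points being trivial), and after multiplying $L$ by a suitable power of $x$ --- which changes neither the local exponents nor the solution set --- the shift is $\tau=0$.

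The single computational input, used at every level of the recursion, is this. Given a \emph{nonzero} power series solution $y_1=x^{k_1}h_1\in\kk\osb x\csb$ of $My=0$ with $k_1=\ord_0 y_1\in\N$ and $h_1(0)\neq 0$, I compare the coefficients of the lowest power $x^{k_1}$ on the two sides of $My_1=0$: only the initial form $M_0$ contributes in that degree (the positive-shift part of $M$ raises the $x$-order), and $M_0(x^{k_1})=\chi_M(k_1)x^{k_1}$, so $\chi_M(k_1)=0$; read in $\F_p$ this says $\rho_1:=\overline{k_1}\in\F_p$ is a local exponent of $M$. For $n=1$ this already finishes the proof, as $\chi_M$ then has degree $1$. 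For $n\geq 2$ I would invoke the recursive clause of ``sufficiently many weak solutions'': the order-$(n-1)$ equation $\widetilde{M}v=0$ produced by the reduction of order $y=y_1u$, $v=u'$, again has sufficiently many weak solutions. Conjugating $M$ first by $x^{k_1}$ (which keeps $\tau=0$ and replaces $\chi_M(s)$ by $\chi_M(s+k_1)$) and then by the unit $h_1$ (harmless on the initial form), I reduce to $y_1=1$, so $M=\sum_{j=1}^n p_j\partial^j$ has no order-zero term and $My=\widetilde{M}(y')$ with $\widetilde{M}=\sum_{j=1}^n p_j\partial^{j-1}$. Since $\tau=0$ forces $\ord_0 p_j\geq j$, one has $\widetilde{M}=x\widetilde{M}'$ with $\widetilde{M}'$ of shift $0$, order $n-1$, power series coefficients, and a regular singularity at $0$; the equations $\widetilde{M}v=0$ and $\widetilde{M}'v=0$ coincide, so $\widetilde{M}'$ falls under the induction hypothesis.

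It then remains to relate the local exponents. A direct computation in terms of $\delta=x\partial$ shows that in this normalized situation $\chi_M(0)=0$, and writing $\chi_M(s)=s\,g(s)$ with $\deg g=n-1$ one gets $\chi_{\widetilde{M}'}(s)=g(s+1)$. Hence the roots of $\chi_{\widetilde{M}'}$ are exactly the roots of $\chi_M$ translated by $-1$, with the multiplicity of the root $0$ lowered by one; by the induction hypothesis they all lie in $\F_p$, so adding $1$ places every root of $\chi_M$ in $\F_p$ in the normalized case, and undoing the conjugation by $x^{k_1}$ --- which merely translates the exponents by $\rho_1\in\F_p$ --- gives the assertion for the original $M$, completing the induction.

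The hard part will be the bookkeeping hidden in the inductive step: verifying that the two conjugations move the indicial polynomial only by the stated translation, that $\widetilde{M}'$ genuinely has shift $0$, order $n-1$, power series coefficients and a regular singularity at $0$ (so that the induction hypothesis really applies), and, above all, nailing down the identity $\chi_{\widetilde{M}'}(s)=g(s+1)$ between the indicial polynomials. One should also be explicit that ``sufficiently many solutions in the weak sense'' is a property of the \emph{equation}, so it survives the passage from $\widetilde{M}$ to the shift-normalized $\widetilde{M}'$ and Honda's recursion dovetails cleanly with the induction on order. The rest --- the lowest-degree coefficient comparison and the closing arithmetic $\rho=(\rho-1)+1\in\F_p$ --- is routine.
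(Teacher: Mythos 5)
Your proposal is correct, but it is not comparable to anything in the paper: for Theorem~\ref{thm:primefield} the authors give no argument at all and simply write ``For a proof, see \cite{Honda81} p.~179.'' What you have done is reconstruct a self-contained proof from the one ingredient the paper does state, namely Honda's characterization of nilpotent $p$-curvature via sufficiently many solutions in the weak sense (Theorem~\ref{thm:sufficiently}), together with the standard fact that nilpotence forces a regular singularity. I checked the points you flag as ``bookkeeping'' and they all go through: conjugation by $x^{k_1}$ preserves the shift decomposition and replaces $\chi_M(s)$ by $\chi_M(s+\overline{k_1})$; conjugation by the unit $h_1$ perturbs $M$ only by terms of positive shift, hence fixes the initial form; the normalization $\ord p_j\geq j$ with $c_{nn}\neq 0$ gives $\widetilde M=x\widetilde M'$ with $\widetilde M'$ of shift $0$, order $n-1$ and regular singular at $0$; and the identity $\chi_M(s)=\sum_{j\geq 1}c_{jj}s^{\underline{j}}=s\sum_{j\geq1}c_{jj}(s-1)^{\underline{j-1}}$ versus $\chi_{\widetilde M'}(s)=\sum_{j\geq1}c_{jj}s^{\underline{j-1}}$ yields exactly $\chi_{\widetilde M'}(s)=g(s+1)$, so the induction closes. (This is very much in the spirit of Honda's original argument, which also runs the reduction of order against the weak-solution recursion.) It is worth noting that the paper's own machinery would give a shorter but heavier alternative: by Dwork's theorem an operator with nilpotent $p$-curvature has a full basis of $n$ solutions in $\kk(z_1,\ldots,z_l)\orb x\crb$, while Theorem~\ref{thm:solp} says the solution space in $\RR$ is exactly $n$-dimensional over $\CC$ with basis elements lying in the summands $t^\rho\kk(z)\orb x\crb$; a local exponent $\rho\notin\F_p$ would contribute a solution outside $\bigoplus_{\sigma\in\F_p}t^\sigma\kk(z)\orb x\crb$ and hence outside the $\mathcal C$-span of the Dwork basis, a contradiction --- this is precisely the argument the authors use for part (b) of Proposition~\ref{prop:nolog}. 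Your route has the advantage of being elementary and independent of Theorem~\ref{thm:solp} and of Dwork's result.
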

For a proof, see \cite{Honda81} p.~179. Further, there is another interesting characterisation of operators with nilpotent $p$-curvature due to Dwork \cite{Dwork90}. They are exactly those operators, for which finitely many of the variables $z_i$ suffice to obtain a full basis of solutions:
\begin{thm}[Dwork, \cite{Dwork90}, p.~756]
An operator $L\in \kk[x][\partial]$ has nilpotent $p$-curvature if and only if there is $l\in \N$ such that $Ly=0$ has a full basis of solutions in $\kk(z_1,\ldots, z_l)\orb x\crb$ over its field of constants $\kk(z_1^p,\ldots, z_l^p)\orb x^p\crb$.
\end{thm}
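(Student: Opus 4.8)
The plan is to read the statement through Honda's Theorem~\ref{thm:sufficiently}, which reduces it to the purely ``functional'' claim: \emph{$Ly=0$ has sufficiently many solutions in the weak sense if and only if it admits a full basis of solutions in $\kk(z_1,\ldots,z_l)\orb x\crb$ over its constants, for some finite $l$.} Both implications go by induction on the order $n$ of $L$, the link being reduction of order: if $y_1$ is one solution, the ansatz $y=y_1u$ turns $Ly=0$ into an equation $\widetilde L\,u'=0$ of order $n-1$ with $\widetilde L\in\kk\osb x\csb[\6]$ (no enlargement of the coefficient ring is needed, since $\widetilde L$'s coefficients are built from $y_1,y_1',\dots$ and those of $L$), and the images $(y_i/y_1)'$ of the remaining basis vectors are solutions of $\widetilde L\,u'=0$, linearly independent over the respective constants precisely when the $y_i$ are. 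The weak-solution property of $L$ is, by definition, exactly ``$\exists\,y_1\in\kk\osb x\csb$ plus the weak-solution property of $\widetilde L$'', so both inductions run once the two base cases ($n=1$) are checked, and these are immediate.

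\textbf{From $\kk(z_1,\ldots,z_l)\orb x\crb$ to weak solutions.} The real task is to manufacture one solution $y_1\in\kk\osb x\csb$. Since $L$ has coefficients in $\kk[x]$ and $\6z_l=x^{-1}/(z_1\cdots z_{l-1})$ lies in $R_{l-1}:=\kk(z_1,\ldots,z_{l-1})\orb x\crb$, the derivation $\6$ (hence $L$) does not raise the $z_l$-degree on $R_{l-1}[z_l]$, and on the top graded piece $R_{l-1}\,z_l^{\,d}$ it acts simply as $L|_{R_{l-1}}$; consequently the leading $z_l$-coefficient of any solution that happens to be a polynomial of \emph{finite} degree in $z_l$ is again a solution, now in $R_{l-1}$. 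Iterating down from $R_l$ to $R_0=\kk\orb x\crb$ and then clearing negative $x$-powers by a factor $x^{mp}$ (a constant) produces $y_1\in\kk\osb x\csb$; reduction of order then yields a full basis of $\widetilde L\,u'=0$ in $R_l$, and the induction hypothesis closes the argument. The point that needs care here is the reduction to a solution of finite $z_l$-degree: this uses Theorem~\ref{thm:solp} together with the observation --- forced by comparing $\mathcal C$-dimensions --- that an equation with a full basis in $R_l$ has all its local exponents in $\F_p$, so that its solution space is the $\mathcal C$-span of the explicitly monomial solutions $u_\rho^{-1}(x^\rho z^{i^*})\in\mathcal F^\rho$, for which a degree bound can be read off from the normal-form construction.

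\textbf{From weak solutions to $\kk(z_1,\ldots,z_l)\orb x\crb$.} Unwind the weak-solution property into a tower $L=L^{(0)}\rightsquigarrow L^{(1)}\rightsquigarrow\cdots\rightsquigarrow L^{(n)}$ of operators of decreasing order, each $L^{(i+1)}$ obtained from $L^{(i)}$ by reduction of order along a power series solution $w_{i+1}\in\kk\osb x\csb$ of $L^{(i)}$. Going back up, a full basis of $L^{(i)}y=0$ is $w_{i+1}$ together with the products $w_{i+1}\!\cdot\!\int v$, as $v$ runs over a basis of solutions of $L^{(i+1)}y=0$ and $\int v$ is a primitive in $\mathcal R$. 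By induction these $v$ are polynomials in $z_1,\ldots,z_m$ of bounded degree with Laurent-series $x$-coefficients, and for such a $v=\sum_k a_kx^k$ a primitive $\sum_k c_kx^k$ is obtained by solving $(D_m+\overline{k{+}1})\,c_{k+1}=a_k$, where $D_m=\sum_{i\le m}(z_1\cdots z_{i-1})^{-1}\6_{z_i}$ is the $\kk$-derivation underlying $\6$ on the $z$-coefficients. When $k+1\not\equiv0\ (\mathrm{mod}\ p)$ the operator $D_m+\overline{k{+}1}$ is invertible on $\kk[z_1,\ldots,z_m]$ (its non-scalar part $D_m$ strictly lowers the degree, hence is locally nilpotent); when $k+1\equiv0$ one needs a $D_m$-primitive of $a_k$, and the cokernel of $D_m$ on $\kk[z_1,\ldots,z_m]$ is spanned by the monomials all of whose exponents are $\equiv p-1\ (\mathrm{mod}\ p)$ --- which acquire primitives after adjoining a single variable $z_{m+1}$, since $D_{m+1}(z_1^{p}\cdots z_m^{p}\,z_{m+1})=z_1^{p-1}\cdots z_m^{p-1}$. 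Thus each of the $n-1$ integration steps costs at most one new variable and raises the $z$-degree by a bounded amount, and the resulting full basis lies in $\kk(z_1,\ldots,z_{n-1})\orb x\crb$.

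The main obstacle is exactly the book-keeping in this last direction: verifying that $\mathrm{coker}(D_m)$ on the polynomial rings $\kk[z_1,\ldots,z_m]$ is really killed by one extra variable, and --- crucially --- that the solutions produced by the tower stay polynomial in the $z_i$ of bounded degree, so that the genuinely infinite obstructions to primitives (poles in the $z_i$, or the mechanism forcing $\exp_p$ to involve \emph{all} the $z_i$) never arise; this is where the specific shape of solutions coming from reduction of order along power series is used. A secondary technical nuisance is the finite-$z_l$-degree reduction in the converse direction, which rests on the explicit description of solutions in Theorem~\ref{thm:solp} and, behind it, on the fact (compare Theorem~\ref{thm:primefield}) that a basis in $R_l$ already forces the local exponents into $\F_p$.
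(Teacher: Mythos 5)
The paper does not prove this statement: it is quoted from Dwork~\cite{Dwork90} (with Honda's Theorem~\ref{thm:sufficiently} supplying the translation of nilpotence into ``sufficiently many solutions in the weak sense''), so there is no in-house argument to measure yours against. Your skeleton --- reduce to the weak-solution property and run both implications by reduction of order --- is indeed the Honda--Dwork template, and the backward direction (weak solutions $\Rightarrow$ basis in finitely many $z_i$) is sound in outline: the facts you invoke about primitives are essentially Lemmata~\ref{lem:order} and~\ref{lem:surjectivity} in miniature. Two caveats there: $D_m$ does not preserve $\kk[z_1,\ldots,z_m]$ (already $D_2z_2=z_1^{-1}$), so one must work with Laurent monomials, where ``lowers the degree'' no longer gives local nilpotence --- the correct bound is the quantity $e(\alpha)$ of Lemma~\ref{lem:order}; and the cokernel of $D_m$ is spanned by the monomials $z^\beta$ with $\overline{\beta_i}=p-1$ for \emph{all} $i\le m$ (exponent $0$ elsewhere disqualifies, e.g.\ $z_1^{p-1}$ lies in the image of $D_2$), which is what makes ``one new variable per integration step'' come out right after the $\prec_e$-triangularity bookkeeping.

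The genuine gap is in the forward direction, precisely at the step you flag: reducing to a solution of finite $z_l$-degree so that the leading-coefficient argument applies. The justification you offer --- a degree bound ``read off from the normal-form construction'' of Theorem~\ref{thm:solp} --- is contradicted by the construction itself: the sets $\mathcal{A}_k$ only impose $\alpha_1<\xi(k)=m_\rho+\cdots+m_{\rho+k}$, a bound growing linearly in $k$, and the paper stresses that elements of $\mathcal{R}$ (and concretely the series $\exp_p$) have unbounded degree in each $z_i$, with only each fixed $x$-coefficient being a polynomial. A general element of $\kk(z_1,\ldots,z_l)\orb x\crb$ therefore has no global leading $z_l$-coefficient, and as written the argument does not manufacture $y_1\in\kk\osb x\csb$. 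Closing this requires a genuinely new input: either a uniform degree and denominator bound for solutions that happen to lie in finitely many variables (Lemma~\ref{lem:poly} gives bounded degree, but only once one already has \emph{polynomial} $z$-coefficients, which is itself not clear from membership in $\kk(z_1,\ldots,z_l)\orb x\crb$), or the route Dwork actually takes, exploiting that in characteristic $p$ the operator $\6^p$ is a derivation commuting with $L$, killing $\kk\orb x\crb$ and sending $z_i$ into $\kk(z_1,\ldots,z_{i-1})\orb x\crb$, so that the filtration $R_0\subset R_1\subset\cdots\subset R_l$ can be played off against the $p$-curvature directly, without ever extracting a single power series solution. Until one of these is supplied, the implication ``full basis in $\kk(z_1,\ldots,z_l)\orb x\crb$ $\Rightarrow$ nilpotent $p$-curvature'' is not proved.
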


This is a generalisation of a result of Honda, who proved the result for $l=1$ and operators of order smaller than $p$, see \cite{Honda81} p.~186.

For example, the operator annihilating $\log(1-x)$, discussed in Example \ref{ex:log(1-x)}, has nilpotent $p$-curvature.

\begin{cor} \label{cor:nil}
Let $L\in \kk[x][\partial]$ be an operator with nilpotent $p$-curvature. Then there is $\ell\in \N$, such that there is a basis of polynomial solutions of $Ly=0$ in $\kk(x,z_1,\ldots, z_\ell)$.
\end{cor}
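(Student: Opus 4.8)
The plan is to deduce this from Dwork's theorem together with the polynomial--solutions lemma, Lemma~\ref{lem:poly}.

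First, since $L$ has nilpotent $p$-curvature, Theorem~\ref{thm:primefield} guarantees that every local exponent of $L$ lies in the prime field $\F_p$, so one may replace the symbols $t^\rho$ by the honest monomials $x^\rho$ and regard the solutions as living in $\kk(z)\orb x\crb$; by Dwork's theorem, moreover, there are an $\ell\in\N$ and a full basis $w_1,\dots,w_n$ of solutions of $Ly=0$ inside $\kk(z_1,\dots,z_\ell)\orb x\crb$, linearly independent over its field of constants $\mathcal{C}_\ell:=\kk(z_1^p,\dots,z_\ell^p)\orb x^p\crb$.

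Next I would put the $w_j$ into the shape demanded by Lemma~\ref{lem:poly}, namely into $\kk[z_1,\dots,z_\ell]\osb x\csb$ (power series in $x$, polynomial coefficients in finitely many $z_i$). For the powers of $x$ there is nothing to do beyond multiplying each $w_j$ by a high power $x^{Np}$, which is a constant of $\mathcal{R}$ and a unit of $\mathcal{C}_\ell$, so that $w_j\in\kk(z_1,\dots,z_\ell)\osb x\csb$. For the denominators in the $z_i$, the point is that $\6$ can only ever introduce, as $z$-denominators, monomials in $z_1,\dots,z_{\ell-1}$ --- this is immediate from the rules $\6 z_k=(xz_1\cdots z_{k-1})^{-1}$ --- while the remaining steps in the recursion imposed on the coefficients of a solution in $\kk(z_1,\dots,z_\ell)\orb x\crb$ involve, at the non-resonant orders, only division by the nonzero scalars $\chi_L(\rho+k)\in\kk$, and at the finitely many resonant orders the inversion of $x\6$ on its image, an operation that preserves the ring $\kk[z_1^{\pm1},\dots,z_{\ell-1}^{\pm1},z_\ell]$. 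Hence every coefficient of every $w_j$ lies in this ring, whose only irreducible denominators are the finitely many $z_1,\dots,z_{\ell-1}$, so that a single constant $g:=(z_1\cdots z_{\ell-1})^{Mp}\in\kk[z_1^p,\dots,z_{\ell-1}^p]$ with $M$ large clears all of them at once. The elements $g\,x^{Np}w_j$ then form a full basis of solutions of $Ly=0$ inside $\bigoplus_{\rho\in\F_p}t^\rho\kk[z_1,\dots,z_\ell]\osb x\csb=\kk[z_1,\dots,z_\ell]\osb x\csb$.

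Finally, applying Lemma~\ref{lem:poly} to this basis of power series solutions produces a full basis of solutions of $Ly=0$ consisting of polynomials in $\bigoplus_{\rho\in\F_p}t^\rho\kk[x,z_1,\dots,z_\ell]=\kk[x,z_1,\dots,z_\ell]\subset\kk(x,z_1,\dots,z_\ell)$, which is the assertion. The step I expect to be the main obstacle is the control of denominators in the previous paragraph: making precise that a solution living in only the variables $z_1,\dots,z_\ell$ has all its coefficients in $\kk[z_1^{\pm1},\dots,z_{\ell-1}^{\pm1},z_\ell]$, the delicate point being the resonant orders, where Dwork's theorem only asserts solvability inside $\kk(z_1,\dots,z_\ell)\orb x\crb$. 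Should this prove awkward, an alternative is to run the flatness argument of the proof of Lemma~\ref{lem:poly} directly on Dwork's rational-coefficient basis, over the Noetherian ring $\kk[z_1^{\pm p},\dots,z_{\ell-1}^{\pm p},z_\ell^p][x^p]$ in place of $\kk[z_1^p,\dots,z_\ell^p][x^p]$: the polynomial solutions it yields may still carry denominators in $\kk[z_1^p,\dots,z_\ell^p]$, but these lie among the constants of $\mathcal{R}$ and hence can be cleared to leave genuine polynomials in $\kk[x,z_1,\dots,z_\ell]$.
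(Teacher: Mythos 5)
Your route is the one the paper intends: Dwork's theorem supplies the finite number $\ell$ of variables, Theorem~\ref{thm:primefield} puts the local exponents into $\F_p$ so that the symbols $t^\rho$ can be replaced by honest powers of $x$, and Lemma~\ref{lem:poly} converts power series solutions into polynomial ones. The paper dismisses the corollary as ``immediate'' from the latter two results, so you are right to isolate the bridging step it elides, namely passing from the basis Dwork provides in $\kk(z_1,\ldots,z_\ell)\orb x\crb$ to one in $\kk[z_1,\ldots,z_\ell]\osb x\csb$, which is the input format of Lemma~\ref{lem:poly}.

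Your treatment of that step, however, has a genuine gap, precisely where you suspected one. The claim that every coefficient of every $w_j$ lies in $\kk[z_1^{\pm1},\ldots,z_{\ell-1}^{\pm1},z_\ell]$ cannot be extracted from Dwork's theorem, which only asserts the existence of \emph{some} basis in $\kk(z_1,\ldots,z_\ell)\orb x\crb$: any basis element may be rescaled by a constant of $\mathcal{C}_\ell=\kk(z_1^p,\ldots,z_\ell^p)\orb x^p\crb$ such as $1/(1+z_1^p)$, and, worse, at each of the infinitely many resonant orders the coefficient of $x^k$ is determined only modulo $\Ker(L_0)$ at that level, so kernel summands with $\kk(z^p)$-coefficients whose denominators vary with $k$ can enter without any uniform bound. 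Thus neither ``monomial denominators'' nor ``one constant clears everything'' holds for an arbitrary Dwork basis, and your primary argument --- which applies normal-form-recursion reasoning to a basis not produced by that recursion --- does not close the gap. Your fallback is the correct repair, but it must be run over the Noetherian ring $\kk(z_1^p,\ldots,z_\ell^p)[x^{\pm p}]$ rather than over a Laurent polynomial ring in the $z_i^p$, since the denominators need not be monomials: writing $w_j=\sum_g y_g\, g$ with $y_g\in\kk(z_1^p,\ldots,z_\ell^p)\orb x^p\crb$ and $g$ ranging over the monomials $x^k z^\alpha$ with $0\leq k,\alpha_i<p$, the elements $L(g)$ lie in $\kk[x^{\pm1},z_1,\ldots,z_\ell]$, and flatness of $\kk(z^p)\orb x^p\crb$ over $\kk(z^p)[x^{\pm p}]$ lets one approximate the syzygy $(y_g)_g$ by one with entries in $\kk(z_1^p,\ldots,z_\ell^p)[x^{\pm p}]$ to any prescribed order. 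The resulting solutions lie in $\kk(x,z_1,\ldots,z_\ell)$, which already gives the statement, and the residual denominators lie in $\kk[z_1^p,\ldots,z_\ell^p]\subseteq\mathcal{C}$ and can be cleared to obtain genuine polynomials; linear independence of the truncations is preserved by taking the approximation order large enough, as in the final clause of Lemma~\ref{lem:poly}. With that single substitution your argument is complete.
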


This immediately follows from Theorem~\ref{thm:primefield} and Lemma~\ref{lem:poly}. 

\subsection{The Grothendieck $p$-curvature conjecture}\label{subsec:pcurv}
We now turn to conjectures of Grothendieck-Katz, Andr\'e, B\'ezivin, Christol, the Chudnovsky brothers, Matzat and van der Put about the algebraicity of solutions of linear differential equations with polynomial coefficients defined over $\Q$ \cite{Katz70, Katz72, Katz82, Andre04, Bezivin91, Christol90, Chudnovsky83, Matzat06, vdP96}. The goal is to  study them using the normal form theorems in characteristic $0$ and $p$. 

It is a classical result, already known to Abel, that algebraic power series satisfy a linear differential equation with polynomial coefficients. The intriguing and meanwhile notorious problem is to characterize those differential equations which arise in this way, a question which appears over and over again in the literature (Abel, Riemann, Autonne, Fuchs, Frobenius, Schwarz, Beukers-Heckman, ...).

In the previous section we have studied operators with nilpotent $p$-curvature. We want to study now operators $L$ with vanishing $p$-curvature, i.e., $L$ divides $\partial^p$ from the right. The vanishing of the $p$-curvature of an operator  can be described in terms of its solutions:

\begin{lem}[Cartier] \label{lem:Car}
Let $L\in \kk[x][\partial]$ be a differential operator, where $\kk$ denotes a field of characteristic $p$. Then $L$ admits a full basis of solutions over $\kk\orb x^p\crb $ in $\kk[x]$ if and only if the $p$-curvature of $L$ vanishes.
\end{lem}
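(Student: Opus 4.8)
The plan is to recognise the statement as a concrete form of Cartier's descent theorem along the purely inseparable extension $\kk[x]/\kk[x^p]$, and to prove it directly from the defining description of the $p$-curvature. First I would record two elementary facts about $D:=\kk[x][\partial]$. Since $(\operatorname{ad}_\partial)^{p}=\operatorname{ad}_{\partial^{p}}$ in characteristic $p$ and $[\partial,a]=a'$ for $a\in\kk[x]$, one gets $[\partial^{p},a]=a^{(p)}$, which vanishes because among any $p$ consecutive integers one is divisible by $p$; hence $\partial^{p}$ is central in $D$, and the same computation shows that $\partial^{p}$ acts as the \emph{zero} operator on $\kk[x]$ itself (though it is a nonzero element of $D$). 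Writing $M_L=D/DL$, centrality makes $\psi_p(L)$ (multiplication by $\partial^{p}$ on $M_L$) a $\kk[x]$-linear endomorphism, and, as $M_L$ is generated by $\bar 1$, it vanishes if and only if $\partial^{p}\in DL$, i.e. $\partial^{p}=QL$ for some $Q\in D$. Throughout I would assume the leading coefficient of $L$ is a unit of $\kk[x]$, so that $M_L$ is free of rank $n=\ord L$; in the direction starting from $\psi_p(L)=0$ this is automatic, by comparing leading terms in $\partial^{p}=QL$. I would also use that the constant ring of $\kk[x]$ for $\tfrac{d}{dx}$ is exactly $\kk[x^p]$, over which $\kk[x]$ is free of rank $p$, and that for finite tuples of polynomials linear independence over $\kk[x^p]$, over $\kk(x^p)$ and over $\kk\orb x^p\crb$ all coincide.

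For the implication from a full basis of solutions to vanishing $p$-curvature, I would take polynomial solutions $y_1,\dots,y_n$ of $Ly=0$ that are independent over the constants and first argue that their Wronskian $W=\det\bigl(y_i^{(j)}\bigr)_{1\le i\le n,\,0\le j\le n-1}$ is nonzero: were it zero, the $y_i$ would all satisfy a common linear equation over $\kk(x)$ of order $<n$, and the Wronski lemma recalled in Section~\ref{subsec:constr0} would then permit at most $n-1$ of them to be independent over $\kk(x^p)$, a contradiction. Nonvanishing of $W$ makes the $D$-linear map $M_L\to\kk[x]^{\,n}$, $\bar R\mapsto(Ry_1,\dots,Ry_n)$ — whose matrix on the basis $\bar 1,\bar\partial,\dots,\overline{\partial^{\,n-1}}$ of $M_L$ is precisely the Wronskian matrix — injective. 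Since $\partial^{p}$ acts as zero on the target $\kk[x]^{\,n}$ and the map is a $D$-module embedding, $\partial^{p}$ acts as zero on $M_L$, i.e. $\psi_p(L)=0$.

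For the converse I would exploit the factorisation $\partial^{p}=QL$ with $\ord Q=p-n$. Consider the $\kk[x^p]$-linear map $\lambda\colon\kk[x]\to\kk[x]$, $y\mapsto Ly$ (it is $\kk[x^p]$-linear because constants pull out of differential operators); its kernel is the solution space $\operatorname{Sol}(L)=\{y\in\kk[x]:Ly=0\}$, and its image lies in $\operatorname{Sol}(Q)=\{y\in\kk[x]:Qy=0\}$, since $Q(Ly)=(QL)y=\partial^{p}y=0$ for every $y\in\kk[x]$. As $\kk[x]$ has rank $p$ over $\kk[x^p]$, rank--nullity and the Wronski lemma applied to $Q$ (of order $p-n$) over $\kk(x)$ give
\[ \operatorname{rank}_{\kk[x^p]}\operatorname{Sol}(L)\;=\;p-\operatorname{rank}_{\kk[x^p]}(\operatorname{im}\lambda)\;\ge\;p-\operatorname{rank}_{\kk[x^p]}\operatorname{Sol}(Q)\;\ge\;p-(p-n)\;=\;n, \]
while the same lemma applied to $L$ gives the reverse inequality $\operatorname{rank}_{\kk[x^p]}\operatorname{Sol}(L)\le n$. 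Hence $\operatorname{Sol}(L)$ has $\kk[x^p]$-rank exactly $n$, and any $\kk[x^p]$-basis of it is a full basis of $n$ polynomial solutions of $Ly=0$, independent over $\kk\orb x^p\crb$.

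I expect the hard part to be not any single deep step but the overall organisation: once $\psi_p(L)=0$ is read as the operator identity $\partial^{p}=QL$ and one recalls that $\partial^{p}$ kills $\kk[x]$, both implications collapse to a dimension count over $\kk[x^p]$, and the residual work is bookkeeping — controlling the relevant $\kk[x^p]$-ranks through the Wronski lemma over $\kk(x)$, and checking the routine coincidence of the three notions of linear independence. If one prefers, the lemma may instead be quoted directly as Cartier's descent theorem, equivalently Katz's characterisation of connections on the affine line with vanishing $p$-curvature as precisely those spanned by their horizontal sections.
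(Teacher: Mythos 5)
The paper does not actually prove this lemma: it states it with attribution to Cartier and defers entirely to the references, citing Katz \cite{Katz70} for the abstract formulation and \cite{SvdP03} for a ``down-to-earth'' proof. Your proposal therefore supplies something the paper omits, and it is essentially correct: both directions go through. The forward direction (basis $\Rightarrow$ vanishing) via the injective $D$-linear map $M_L\to\kk[x]^n$ built from the Wronskian matrix, together with the observation that $\partial^p$ annihilates $\kk[x]$, is sound; the converse via the factorisation $\partial^p=QL$, the $\kk[x^p]$-linearity of $y\mapsto Ly$, and the rank count $\mathrm{rank}\,\mathrm{Sol}(L)\ge p-\mathrm{rank}\,\mathrm{Sol}(Q)\ge n$ over the principal ideal domain $\kk[x^p]$ is the classical down-to-earth argument (it is close in spirit to the one in \cite{SvdP03}), and your reduction of the three notions of independence (over $\kk[x^p]$, $\kk(x^p)$, $\kk\orb x^p\crb$) to one another is legitimate since $\kk(x)\otimes_{\kk(x^p)}\kk\orb x^p\crb\cong\kk\orb x\crb$. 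One point deserves emphasis rather than a parenthetical: your standing assumption that the leading coefficient of $L$ is a unit is not merely a convenience. With the paper's literal definition of the $p$-curvature as the action of $\partial^p$ on $\kk[x][\partial]/\kk[x][\partial]L$ (rather than on $\kk(x)[\partial]/\kk(x)[\partial]L$), the equivalence can fail without it: for $L=x\partial$ the constant $1$ is already a full basis of solutions, yet $\partial^p\notin\kk[x][\partial]\,x\partial$ because the top coefficient of any product $Q\,x\partial$ is divisible by $x$. So in the direction ``basis $\Rightarrow$ vanishing'' the unit hypothesis (or, equivalently, working over $\kk(x)$ as Katz does) is genuinely needed, while in the other direction it is automatic, exactly as you observe. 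Apart from making that caveat explicit, your argument is complete and could stand as a proof of the lemma in the normalised setting.
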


The original abstract formulation and a proof can be found in \cite{Katz70}, a more ``down-to-earth'' proof in \cite{SvdP03}. Compare this result also to Corollary \ref{cor:nil}.

In the following let $L\in\Q[x][\partial]$ be a differential operator defined over $\Q$ and denote by $L_p\in \F_p[x][\partial]$ the differential operator that arises from reducing the coefficients of $L$ modulo $p$, whenever this is defined. The reduction $L_p$ is defined for all but finitely many prime numbers $p$. We are interested in the interplay between solutions of the equations $Ly=0$ and $L_py=0$. Most prominent here is the Grothendieck $p$-curvature conjecture.

We now give an elementary formulation of the Grothendieck $p$-curvature conjecture. In this formulation the  $p$-curvature does not appear, however Cartier's Lemma \ref{lem:Car} establishes the connection.

\begin{conj}[Grothendieck $p$-curvature conjecture, \cite{Honda81}]
Let $L\in\Q[x][\partial]$. Assume that $L_py=0$ has a basis of $\F_p\osb x^p\csb$-linearly independent solutions in $\F_p\osb x\csb$ for almost all prime numbers $p$. Then there exists a basis of $\Q$-linearly independent algebraic solutions of $Ly=0$ in $\Q\osb x\csb$.
\end{conj}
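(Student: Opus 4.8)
The plan is to route the conjecture through the two normal form theorems, using them to translate the hypothesis on the reductions $L_p$ into structural information about the solutions of $Ly=0$ over $\Q$, and then to force algebraicity. The first step would be to reformulate the hypothesis. By Cartier's Lemma~\ref{lem:Car}, together with Lemma~\ref{lem:poly} and the remark following it (which upgrades power series solutions with no $z_i$'s to polynomial ones), the assumption that $L_p y=0$ has a full basis of $\F_p\osb x^p\csb$-independent solutions in $\F_p\osb x\csb$ for almost all $p$ is equivalent to the vanishing of the $p$-curvature of $L_p$ for almost all $p$. For such $p$ the normal form theorem in characteristic $p$, Theorem~\ref{thm:solp}, then produces a basis of solutions involving \emph{no} variables $z_i$ at all: the operators $S\circ T$ never force a division by $p$, so the automorphism $u_\rho$ of the $\rho$-function space restricts to an automorphism of a finite-dimensional space of polynomials, and the algorithm terminates inside $\bigoplus_\rho t^\rho\kk[x]$ after finitely many steps.

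The second step would be to transport this back to characteristic zero. The char-$0$ normal form theorem, Theorem~\ref{thm:sol0}, writes each solution as $y_{\rho,i}=u_\Omega^{-1}(x^\rho z^i)\vert_{z=\log x}$ with $u_\Omega^{-1}=\sum_k (S_0\circ T)^k$, where the denominators of the resulting series are controlled by the values $\chi_L(\rho+k)$ entering $S_0$. The idea is to show that if, for almost every prime, the ``same'' series reduces to a polynomial of controlled complexity, then a criterion of Eisenstein--Jungen--Borel type (in its modern $G$-function incarnation) forces $y_{\rho,i}$ to be algebraic over $\Q(x)$. Concretely one would want to deduce from the vanishing of the $p$-curvatures that the formal solutions of $Ly=0$ are $G$-functions satisfying global bounds, and then invoke the Andr\'e--Chudnovsky algebraicity theorem to conclude, varying over all sets $\Omega$ of local exponents so as to obtain a full basis.

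The main obstacle — and the reason the statement is listed as a conjecture rather than proved here — sits precisely at the junction of the two algorithms: the construction in Theorem~\ref{thm:nft} is \emph{not} the reduction modulo $p$ of the construction in Theorem~\ref{thm:nft0}. In characteristic zero, $S_0$ lowers the $z$-degree by exploiting $\chi_L(\rho+k)\neq 0$; in characteristic $p$, $S$ is forced to keep the $x$-order fixed and to absorb excess powers of the iterated logarithm into fresh variables $z_i$ whenever $\chi_L(\rho+k)\equiv 0\bmod p$. The two recipes therefore coincide only away from these ``resonant'' primes, and the divergence at resonant primes is exactly what must be controlled uniformly in $p$ in order to bound the denominators of the characteristic-zero solution. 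Taming this divergence for all primes simultaneously is the unresolved core of the Grothendieck--Katz conjecture; it is known only in special situations (Gauss--Manin connections after Katz, certain rigid or motivic local systems, and the cases accessible through the Chudnovsky and Andr\'e $G$-function machinery). Accordingly, what the paper can offer here is not a proof but a precise comparison: it pins down where the two normal forms agree, isolates the number-theoretic phenomena at the resonant primes, and thereby clarifies exactly which gap a proof of the conjecture would have to close.
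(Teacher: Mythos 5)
This statement is a \emph{conjecture} in the paper (the Grothendieck--Katz $p$-curvature conjecture), not a theorem: the paper offers no proof, records that the general case ``even for order two equations'' still resists, and confines itself to proving consequences of the hypothesis (Proposition~\ref{prop:nolog}: regular singularity at $0$, pairwise distinct rational local exponents, a basis of Puiseux series solutions free of logarithms) together with the implication towards B\'ezivin's conjecture (Lemma~\ref{lem:BG}). You correctly recognize this, and your closing paragraph, which locates the obstruction at the discrepancy between the two normal form algorithms at ``resonant'' primes, matches the paper's own discussion in Sections~\ref{subsec:pcurv} and~\ref{subsec:outlook} and Example~\ref{ex:GB}. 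So there is no proof in the paper to compare against, and you do not claim to supply one.

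One substantive caution about the sketch itself, in case you intend to pursue it: the middle step --- deduce that the formal solutions are globally bounded and then ``invoke the Andr\'e--Chudnovsky algebraicity theorem to conclude'' --- is not available. There is no theorem deducing algebraicity from global boundedness of a basis of solutions; that deduction is precisely B\'ezivin's conjecture, which the paper also states only as a conjecture and which is likewise open. (The Chudnovsky--Andr\'e machinery shows that the minimal operator of a $G$-function is a $G$-operator, i.e., has nilpotent $p$-curvatures for almost all $p$; it does not yield algebraicity.) Your first step is fine --- Cartier's Lemma~\ref{lem:Car} together with Lemma~\ref{lem:poly} does identify the hypothesis with the vanishing of the $p$-curvature for almost all $p$ --- but even granting a uniform control of denominators, your route would only reduce Grothendieck--Katz to B\'ezivin, the reverse of the implication the paper actually proves, and would still leave an open conjecture at its core. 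The honest conclusion, which you reach, is that the paper's contribution here is the precise comparison of the two normal form algorithms and the isolation of the number-theoretic obstruction, not a resolution.
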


\begin{rem}
One can easily generalize this conjecture to number fields, by replacing $\Q$ with $K=\Q(\alpha)$, for $\alpha$ an algebraic number, and $\F_p$ by the residue fields of $\mathcal{O}_K$ modulo its prime ideals $\mathfrak{p}$.
\end{rem}

The case of order one equations is equivalent to a special case of a theorem of Kronecker (which, in turn, is a special case of Chebotarev's density theorem) \cite{Honda81}. Katz has proven the conjecture for Picard-Fuchs equations \cite{Katz72}. There have been recent and quite technical advances in the conjecture by various people, but the general case (even for order two equations) seems to still resist. Bost has established a more general variant of the conjecture for algebraic foliations and subgroups of Lie-groups \cite{Bost01}, \cite{Chambert02}, Thm.~2.4. Progress was also made by Farb and Kisin\cite{FK09} as well as Calegari, Dimitrov and Tang \cite{CDT21}.

An apparently  weaker statement than the Grothendieck conjecture was proposed by B\'ezivin.
\begin{conj}[B\'ezivin conjecture,  \cite{Bezivin91}]
Let $L\in\Q[x][\partial]$ be a differential operator. Assume that $Ly=0$ has a basis of $\Q$-linearly independent solutions in $\Z\osb x\csb$. Then these solutions are algebraic over $\Q(x)$.
\end{conj}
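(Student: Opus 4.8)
The plan is to reduce the conjecture to the Grothendieck--Katz conjecture stated above, and to indicate what the two normal form theorems contribute beyond that. First I would observe that the hypothesis severely constrains $L$ at $0$ (after clearing denominators we may assume $L\in\Z[x][\partial]$): if $Ly=0$ has $n=\ord L$ solutions in $\Z\osb x\csb$ that are $\Q$-linearly independent, their Wronskian is a nonzero element of $\Z\osb x\csb$, so the space of formal power series solutions of $Ly=0$ at $0$ has the full dimension $n$. Since the indicial polynomial has degree $\ord L_0$, an irregular singularity (where $\ord L_0<n$) would admit strictly fewer formal power series solutions; hence $0$ is an ordinary point or a regular singularity of $L$, all local exponents of $L$ at $0$ are nonnegative integers, and no logarithmic terms occur. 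By Fuchs each basis solution then converges on a disc, so its coefficients grow at most geometrically, and with the integrality assumption every solution of $Ly=0$ in $\Z\osb x\csb$ is a $G$-function (in the sense of Chudnovsky and André).

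Second I would pass to characteristic $p$ for all but finitely many primes. For $p$ not dividing the leading coefficient of $L$ nor the content of the Wronskian, the reductions of the chosen basis are $n$ solutions of $L_py=0$ in $\F_p\osb x\csb$ that remain linearly independent over $\F_p\orb x^p\crb$, hence a full basis of solutions of $L_py=0$. Since the local exponents are integers, Lemma~\ref{lem:poly} replaces this by a full basis of \emph{polynomial} solutions, and Cartier's Lemma~\ref{lem:Car} then forces the $p$-curvature of $L_p$ to vanish. Thus the hypothesis of the Grothendieck--Katz conjecture is satisfied for almost all $p$, and the conjecture at hand would follow from it: a basis of algebraic solutions of $Ly=0$ in $\Q\osb x\csb$ spans the whole $\Q$-solution space inside $\Q\osb x\csb$, so in particular each of the given integral solutions is then algebraic over $\Q(x)$. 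This exhibits the ``apparently weaker'' statement as an honest consequence of Grothendieck--Katz.

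For an unconditional attack I would play the two normal form theorems against each other. In characteristic $0$, Theorem~\ref{thm:nft0} writes every solution as $v(x^{\rho})_{\vert z=\log x}$ with $\rho\in\Z_{\ge0}$ and $v=\sum_{k\ge0}(S\circ T)^k$, where $T=L_0-L$ and $S$ is a right inverse of $L_0$; the denominators introduced by $S$ are governed by the values $\chi_L(\rho+k)$ of the indicial polynomial, so integrality of the solution is a strong arithmetic constraint on those values across all $k$. In characteristic $p$, Theorem~\ref{thm:nft} (combined with Theorem~\ref{thm:primefield} and the vanishing of the $p$-curvature just established) gives the corresponding solution as $u^{-1}(x^{\rho})$ \emph{without} any of the variables $z_i$, and its algebraicity over $\F_p(x)$ amounts to an eventual finite-state property of its coefficient sequence, exactly as in the discussion of $\exp_p\vert_{z_i=0}$. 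The hope is that a single estimate on the automorphism $v$, valid simultaneously $p$-adically for all $p$ and archimedeanly, would make the finite-state pattern in each reduction visible over $\Q$ and hence force algebraicity.

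The hard part will be precisely this last step, and it is essentially the Grothendieck--Katz conjecture itself. The obstruction is the phenomenon flagged in the introduction: the characteristic-$p$ algorithm of Theorem~\ref{thm:nft} is \emph{not} the reduction modulo $p$ of the characteristic-$0$ algorithm of Theorem~\ref{thm:nft0}, because the divisions by $p$ that would spoil integrality in characteristic $0$ are replaced in characteristic $p$ by the introduction of the variables $z_i$, which are absent here; so there is no naive transfer of information between the two pictures and a genuinely new uniform-in-$p$ estimate on $v$ is needed. As a realistic first target I would redo the order-one case inside this framework (there algebraicity is classical, via Kronecker and Chebotarev, and transparent from the shape of $u^{-1}(x^{\rho})$), and then attempt an induction on the order through Honda's ``sufficiently many solutions in the weak sense'' (Theorem~\ref{thm:sufficiently}); the obstacle there is that the order-reducing substitution $y=y_1u$ does not obviously preserve the integrality hypothesis.
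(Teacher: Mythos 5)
This statement is an open conjecture, and the paper does not prove it; it only proves Lemma~\ref{lem:BG}, namely that the Grothendieck $p$-curvature conjecture implies the B\'ezivin conjecture. Your proposal correctly recognizes this: the only part of your argument that is an actual proof is the conditional reduction, and that reduction coincides with the paper's proof of Lemma~\ref{lem:BG} (reduce the integral basis modulo all but finitely many primes, check that it stays linearly independent over $\F_p\orb x^p\crb$, conclude that the hypothesis of the Grothendieck--Katz conjecture holds, and note that algebraicity of some basis gives algebraicity of every solution in $\Q\osb x\csb$). Your version is slightly more detailed than the paper's: you exclude primes via the content of the Wronskian rather than via the spread of the local exponents, and your first paragraph (regular singularity, distinct nonnegative integer exponents, no logarithms) reproves in passing what the paper records as Proposition~\ref{prop:nolog}(a),(b); the detour through Lemma~\ref{lem:poly} and Cartier's Lemma~\ref{lem:Car} is harmless but unnecessary for the paper's elementary formulation of the Grothendieck conjecture. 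The remaining two paragraphs are a research program, not a proof, and you say so honestly; they are consistent with the paper's own discussion in Sections~\ref{subsec:pcurv} and \ref{subsec:outlook} of why the characteristic-$p$ normal form algorithm is not the reduction of the characteristic-$0$ one. In short: no gap relative to what can actually be proved, but be explicit that what you have established is only the implication of Lemma~\ref{lem:BG}, not the conjecture itself.
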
 

\begin{lem} \label{lem:BG}
The validity of the Grothendieck $p$-curvature conjecture implies the validity of the B\'ezivin conjecture. 
\end{lem}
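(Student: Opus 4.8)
The plan is to show that Bézivin's hypothesis on $L$ is strong enough to force the hypothesis of the Grothendieck $p$-curvature conjecture for $L$, and then to extract Bézivin's conclusion from Grothendieck's. Put $n=\ord L$. Multiplying $L$ by a common denominator of its coefficients does not alter the solution space, so I would first assume $L\in\Z[x][\partial]$; then $L_p\in\F_p[x][\partial]$ is defined for every prime $p$, and since the leading coefficient $p_n\in\Z[x]$ is non-zero, $L_p$ has order $n$ for all but finitely many $p$. Let $y_1,\dots,y_n\in\Z\osb x\csb$ be the given $\Q$-linearly independent solutions of $Ly=0$. Because $\frac{d}{dx}$ commutes with the reduction map $\Z\osb x\csb\to\F_p\osb x\csb$, each reduction $\bar y_i$ satisfies $L_p\bar y_i=\overline{Ly_i}=0$, so the $\bar y_i$ are $n$ solutions of $L_py=0$ in $\F_p\osb x\csb$.

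The one substantial step is to check that $\bar y_1,\dots,\bar y_n$ are $\F_p\osb x^p\csb$-linearly independent for almost all $p$. I would use the Wronskian $W=\det\bigl(y_i^{(j-1)}\bigr)_{1\le i,j\le n}$. Since the derivatives $y_i^{(j)}$ lie in $\Z\osb x\csb$, we have $W\in\Z\osb x\csb$, and Wronski's lemma applied in the differential field $\Q\orb x\crb$ (whose field of constants is $\Q$) shows that $\Q$-linear independence of the $y_i$ forces $W\ne 0$. Let $c\in\Z\setminus\{0\}$ be its lowest non-zero coefficient. For every prime $p\nmid c$ for which $L_p$ has order $n$ — all but finitely many — the reduced Wronskian $W(\bar y_1,\dots,\bar y_n)$ is non-zero in $\F_p\osb x\csb$, because Wronskians commute with reduction. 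Now any relation $\sum_i a_i\bar y_i=0$ with $a_i\in\F_p\osb x^p\csb$ yields, after repeated differentiation (using $a_i'=0$ in characteristic $p$), the relations $\sum_i a_i\bar y_i^{(j)}=0$ for all $j$, making the matrix $\bigl(\bar y_i^{(j-1)}\bigr)$ singular over $\F_p\orb x\crb$ — impossible. Hence the $\bar y_i$ form a basis of $\F_p\osb x^p\csb$-linearly independent solutions of $L_py=0$ in $\F_p\osb x\csb$, which is exactly the hypothesis of the Grothendieck conjecture for $L$.

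Granting that conjecture, $Ly=0$ has a basis $\tilde y_1,\dots,\tilde y_n\in\Q\osb x\csb$ of $\Q$-linearly independent solutions that are algebraic over $\Q(x)$. The $\Q$-vector space of solutions of $Ly=0$ inside $\Q\orb x\crb$ has dimension at most $n$, so the $\tilde y_j$ span it; in particular every $y_i$ is a $\Q$-linear combination of the $\tilde y_j$, hence algebraic over $\Q(x)$. This is Bézivin's conclusion, so the implication follows. The argument has no serious obstacle; the only points needing care are the bookkeeping of the finitely many excluded primes (for the order of $L_p$ and for the reduction of $W$) and the elementary equivalence between $\F_p\osb x^p\csb$-linear dependence of power-series solutions and vanishing of the reduced Wronskian.
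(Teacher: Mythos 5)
Your proof is correct, and its overall architecture coincides with the paper's: reduce the integral solutions modulo $p$, verify that for almost all $p$ they stay independent over the constants $\F_p\orb x^p\crb$ so that the Grothendieck hypothesis holds, then invoke the conjecture and observe that the original solutions are $\Q$-linear combinations of the resulting algebraic basis. The one place where you diverge is the justification of the middle step. The paper disposes of it in one sentence by excluding the primes $p$ smaller than the maximal difference of the local exponents of $L$: distinct integral exponents that remain incongruent modulo $p$ guarantee that the leading monomials of the reduced solutions stay independent over $\F_p\orb x^p\crb$. You instead take the Wronskian $W=\det\bigl(y_i^{(j-1)}\bigr)\in\Z\osb x\csb$, note it is non-zero by Wronski's lemma over $\Q\orb x\crb$, and exclude the primes dividing its lowest non-zero coefficient; the standard differentiation trick ($a_i'=0$ for $a_i\in\F_p\osb x^p\csb$) then converts any relation over the constants into singularity of the reduced Wronskian matrix. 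Both exclude only finitely many primes, which is all that is needed. Your route is more self-contained and fully written out (the paper's sentence tacitly uses facts about the shape of the solutions established elsewhere), while the paper's criterion is more explicit about \emph{which} primes are bad, tying them to the local exponents — the quantity that reappears throughout Section 4.4. Your bookkeeping of the excluded primes (order of $L_p$, and $p\nmid c$) and the final descent from the algebraic basis to the given solutions are both handled correctly.
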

In other words: The hypothesis of the B\'ezivin conjecture implies the hypothesis of the Grothendieck $p$-curvature conjecture.
 \begin{proof}
Assume that $y\in \Z\osb x\csb$ is an integral solution of $Ly=0$. Its reduction modulo all prime numbers is well-defined and a solution to $L_py=0$. For $p$ larger than the maximal difference of the local exponents of $L$, a basis of solutions of $Ly=0$ gets mapped by reduction to a basis of solution modulo $p$. The condition on $p$ is necessary to ensure that the reductions of the solutions do not become linearly dependent over $\F_p\orb x^p\crb$. Thus by the Grothendieck $p$-curvature conjecture $Ly=0$ has a basis of algebraic solutions and $y$, as a linear combination of those algebraic solutions, is algebraic itself.
\end{proof}

A substantial advance towards the Grothendieck $p$-curvature conjecture would be to prove the inverse implication of Lemma \ref{lem:BG}: in fact it would transfer the problem from positive characteristic to characteristic $0$. To approach the converse implication, it is reasonable to compare the algorithm of the normal form theorem in characteristic $0$ applied to an operator $L$ to the algorithm of the normal form theorem in characteristic $p$, applied to the reduction $L_p$ of the operator $L$ modulo $p$. We investigate in the next paragraphs how the normal form theorems could be used to achieve this. 

The problem which arises lies in the observation that the characteristic $p$ algorithm does not entirely coincide with the reduction modulo $p$ of the algorithm in zero characteristic. Very subtle disparities appear, and this makes it hard to deduce properties of the characteristic zero solutions from the characteristic $p$ solutions, in particular, to prove their algebraicity. One hope is, however, to be able to compare the Grothendieck-Katz conjecture with the B\'ezivin conjecture.

We will use the following number theoretic result:

\begin{thm}[Kronecker, \cite{Kronecker80}, Frobenius, \cite{Frobenius96}] \label{thm:density}
Let $f\in \Q[x]$ be a polynomial of degree $n$, let $s\in \N$ and $n_1,\ldots, n_s$ with $n_1+\ldots+n_s=n$. The density of prime numbers $p$ for which the reduction of $f$ modulo $p$ splits into $k$ factors of degrees $f_1,\ldots, f_k$ is equal to the number of permutations of the roots of $f$ in the Galois group of $f$ consisting of $s$ cycles of lengths $f_1, \ldots ,f_s$. In particular, $f$ splits into linear factors over $\Q[x]$ if and only if its reduction modulo $p$ splits in $\F_p[x]$ into linear factors for almost all primes $p$.
\end{thm}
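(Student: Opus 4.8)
The plan is to deduce this theorem from two classical inputs: the factorization theorem of Dedekind, which translates the shape of $f$ modulo $p$ into combinatorial data about a Frobenius substitution, and a density theorem for Frobenius substitutions (Chebotarev's, or already the weaker one of Frobenius himself). First I would carry out the routine reductions: clearing denominators and dividing by the leading coefficient we may assume $f\in\Z[x]$ is monic, and replacing $f$ by its radical (which has the same irreducible factors modulo all but finitely many primes) we may assume $f$ is separable. Let $K\subset\overline{\Q}$ be the splitting field of $f$, set $G=\mathrm{Gal}(K/\Q)$, and embed $G\hookrightarrow S_n$ via its faithful action on the $n$-element set $R$ of roots of $f$. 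Discarding the finitely many primes dividing $\mathrm{disc}(f)$ (equivalently, ramifying in $K$), every remaining $p$ has a well-defined Frobenius conjugacy class $\mathrm{Frob}_p\subseteq G$, represented by $\mathrm{Frob}_{\mathfrak p}$ for any prime $\mathfrak p$ of $K$ above $p$.

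Next I would recall Dedekind's theorem in the following form: for such $p$, the degrees of the irreducible factors of $f\bmod p$ over $\F_p$ are exactly the lengths of the disjoint cycles into which $\mathrm{Frob}_p$ decomposes as a permutation of $R$. Indeed, reduction modulo $\mathfrak p$ carries $R$ bijectively onto the roots of $f\bmod p$ in $\overline{\F_p}$ and intertwines $\mathrm{Frob}_{\mathfrak p}$ with the arithmetic Frobenius $x\mapsto x^{p}$ of the residue field, whose orbits on those roots have sizes equal to the degrees of the irreducible factors. Hence, up to finitely many exceptions, the set of primes $p$ for which $f\bmod p$ has factorization type $(f_1,\ldots,f_s)$ coincides with the set of $p$ for which $\mathrm{Frob}_p$ has cycle type $(f_1,\ldots,f_s)$, i.e. $\mathrm{Frob}_p\subseteq S$, where $S\subseteq G$ is the union of all conjugacy classes of $G$ whose elements have that cycle type in $S_n$. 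Applying the Chebotarev density theorem --- each conjugacy class $C$ occurs as $\mathrm{Frob}_p$ for a set of primes of density $|C|/|G|$ --- and summing over the classes composing $S$ yields that the density in question equals $|S|/|G|$, the relative frequency in $G$ of permutations of cycle type $(f_1,\ldots,f_s)$; this is the assertion, with ``number of permutations'' read as this normalized count. I would also point out that the full strength of Chebotarev is not needed: the set $S$ is closed under $g\mapsto g^{k}$ whenever $\gcd(k,\ord(g))=1$, since a power of an $\ell$-cycle by an exponent coprime to $\ell$ is again an $\ell$-cycle, so already Frobenius' original density theorem for such unions of classes suffices --- which is why the statement is classically attributed to him.

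Finally, for the ``in particular'' clause: if $f$ splits into linear factors over $\Q[x]$ then $f\bmod p$ does so for every $p$ not dividing the denominators, hence for all but finitely many $p$. Conversely, if $f\bmod p$ splits completely for almost all $p$, the density of such primes is $1$, so by the above $|S|/|G|=1$ with $S=\{\mathrm{id}\}$, forcing $G=\{\mathrm{id}\}$ and thus $K=\Q$; then every root of $f$ lies in $\Q$ and $f$ splits over $\Q[x]$. I do not expect a genuine obstacle in this argument: the two density theorems and Dedekind's theorem are the substantial ingredients and are invoked as known, while the remaining work is only the bookkeeping of reducing to the monic separable case, excluding the ramified and denominator-dividing primes, and matching factorization types to cycle types.
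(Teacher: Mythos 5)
Your argument is correct, but there is nothing in the paper to compare it against: the authors state Theorem~\ref{thm:density} as a classical citation (Kronecker, Frobenius) and explicitly decline to prove it, remarking only that ``this version was proven by Frobenius'' and that ``it is also an easy corollary of the Chebotarev density theorem.'' Your proposal is precisely the standard derivation behind that remark: reduce to a monic separable polynomial, invoke Dedekind's theorem to identify the factorization type of $f \bmod p$ with the cycle type of $\mathrm{Frob}_p$ on the roots for the unramified primes, and apply Chebotarev (or, as you rightly observe, already Frobenius' own density theorem, since the set of elements of a fixed cycle type is stable under $g\mapsto g^k$ for $k$ coprime to $\mathrm{ord}(g)$) to get the density $|S|/|G|$. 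Two small points in your favor: you correctly repair the statement's sloppy phrasing by reading ``number of permutations'' as the normalized count $|S|/|G|$ (the paper's literal wording, like its mismatched indices $s,k$ and $n_i,f_i$, cannot be right as a density), and your deduction of the ``in particular'' clause from $S=\{\mathrm{id}\}$ forcing $G$ trivial is the intended one. No gap.
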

This version was proven by Frobenius, while similar results were formulated by Kronecker before. It is also an easy corollary of the Chebotarev density theorem.

We now describe consequences of the hypothesis of the Grothendieck $p$-curvature conjecture. They were already collected by Honda and we refer for parts of the proof to his article. However, for the last assertion we give a different proof. It compares the two algorithms obtained from the normal form theorems in characteristics $0$ and $p$.

\def\mm{{m}}
\def \LL{{\ol L}}
\def\uu{{\ol u}}
\def\TT{{\ol T}}
\def\SS{{\ol S}}

For an operator $L\in\Q[x][\partial]$ in characteristic $0$ and a fixed prime $p$ we denote in the sequel by $\LL=L_p\in\F_p[x][\partial]$ the reduction of $L$ modulo $p$, whenever this reduction is defined.

\begin{prop}[Honda]\label{prop:nolog}
Let $L\in\Q[x][\partial]$ be a differential operator with polynomial coefficients over $\Q$. Assume that the induced equations $\LL y=0$ modulo $p$ have an $\F_p\osb x^p\csb$-basis of power series solutions in $\F_p\osb x\csb$, for almost all primes $p$. Then
\begin{enumerate}[(a)]
\item The operator $L$ has a regular singularity at $0$.
\item The local exponents of $L$ at $0$ are pairwise distinct rational numbers $\rho_i\in\Q$.
\item There exists a $\Q$-basis of Puiseux series solutions $y(x)$ of $Ly=0$ in $\sum_{\rho_i}x^{\rho_i}\Q\osb x\csb$, where $\rho_i$ ranges over the local exponents of $L$. In particular, this basis is independent of the variables $t$ and $z_i$.
\end{enumerate}
\end{prop}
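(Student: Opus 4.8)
The plan is to route the hypothesis through the positive‑characteristic theory of the previous sections, extract the vanishing of the $p$‑curvature of $\LL=L_p$ for almost all $p$, and carry the resulting rigidity back to characteristic zero. Parts (a) and (b) are essentially due to Honda and I would dispatch them quickly. By Cartier's Lemma \ref{lem:Car} (whose formal power series version is equally standard) the hypothesis is equivalent to the vanishing -- a fortiori the nilpotence -- of the $p$‑curvature of $\LL$ for almost all $p$, so by Theorem \ref{thm:primefield} together with \cite{Honda81} each such $\LL$ has a regular singularity at $0$ with all local exponents in $\F_p$; since reduction modulo $p$ preserves the shift, the initial form and the indicial polynomial $\chi_L$ of $L$ for all but finitely many $p$, an irregular $L$ would reduce to an irregular $\LL$ for almost all $p$, contradicting this -- hence (a). For (b): vanishing $p$‑curvature means by definition that $\LL$ right‑divides $\partial^p$, so multiplying on the left by $x^p$, using $x^p\partial^p=\delta(\delta-1)\cdots(\delta-p+1)$ and the multiplicativity of indicial polynomials under composition of shift‑zero operators, $\chi_{\LL}$ divides the squarefree polynomial $s(s-1)\cdots(s-p+1)$; thus for almost all $p$ the reduction $\chi_L\bmod p$ splits over $\F_p$ into pairwise distinct linear factors, so by the Kronecker--Frobenius density theorem \ref{thm:density} $\chi_L$ splits over $\Q$ into linear factors whose discriminant, nonzero modulo almost all primes, is nonzero. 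Hence the local exponents $\rho_1,\dots,\rho_r$ of $L$ are pairwise distinct rationals.

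For (c) I would compare the characteristic‑zero and characteristic‑$p$ Frobenius recursions. Fix a $\Z$‑congruence class $\Omega=\{\rho_1<\dots<\rho_r\}$ of local exponents, all of multiplicity one by (b). Classically -- equivalently, by the normal form theorem \ref{thm:nft0} -- $Ly=0$ has a basis of solutions in $\sum_i x^{\rho_i}\Q\osb x\csb$ (necessarily with $\Q$‑coefficients, and then involving neither $t$ nor the $z_i$) if and only if finitely many rational obstruction coefficients $c_{ij}$, one for each resonant pair $\rho_i<\rho_j$ in $\Omega$, all vanish: $c_{ij}$ is the value, at the resonant step $\ell=\rho_j-\rho_i$, of the Frobenius recursion for the solution starting at $x^{\rho_i}$, which must vanish for the construction to continue without a logarithm. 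So it suffices to prove every $c_{ij}=0$, and I would do this by induction on $\rho_j-\rho_i$.

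The induction rests on three observations. First, for the characteristic‑$p$ solution $y_{\rho_i,0}=\uu_{\rho_i}^{-1}(t^{\rho_i})$ produced by Theorem \ref{thm:nft}, the coefficient of the monomial $t^{\rho_i}x^{\rho_j-\rho_i}z_1$ in $y_{\rho_i,0}$ is a nonzero multiple (namely $1/\chi_{\LL}^{[1]}(\rho_j)$, with $\rho_j$ a simple root) of the characteristic‑$p$ obstruction at step $\rho_j-\rho_i$; this falls out of Lemma \ref{lem:monomials} and the identity $\LL_0\,y_{\rho_i,0}=\TT\,y_{\rho_i,0}$, using the inductive hypothesis to ensure that $y_{\rho_i,0}$ carries no $z$ at lower order. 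Second, since $k=\rho_j-\rho_i$ is a fixed integer $<p$ and only the finitely many values $\chi_L(\rho_i+\ell)$, $0<\ell<k$, are involved, for almost all $p$ the first $k$ steps of the characteristic‑$p$ recursion for $\LL$ are the reductions modulo $p$ of the first $k$ steps of the characteristic‑zero recursion for $L$ -- both algorithms, by the shape of their respective complements of $\Ker L_0$ (Lemmata \ref{lem:image} and its positive‑characteristic analogue), setting the free resonant coefficients equal to zero -- so that the characteristic‑$p$ obstruction above is exactly $c_{ij}\bmod p$. Third, the hypothesis, via Theorem \ref{thm:solp} and the fact that the elements of $\mathcal C$ have their $z$‑coefficients in $\F_p(z_1^p,z_2^p,\dots)$ while $\F_p\osb x\csb$ has none, forces $y_{\rho_i,0}$ -- being a $\mathcal C$‑linear combination of genuine power series -- to have all its coefficients in $\F_p(z_1^p,\dots)$, hence (as $y_{\rho_i,0}\in\mathcal F$ has polynomial coefficients) in $\F_p[z_1^p,z_2^p,\dots]$; in particular the coefficient of $t^{\rho_i}x^{\rho_j-\rho_i}z_1$ vanishes. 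Combining the three observations gives $c_{ij}\equiv0\pmod p$ for all but finitely many $p$, whence $c_{ij}=0$, and (c) follows; the $\Q$‑rationality of the resulting Puiseux series and their independence of $t$ and the $z_i$ are then immediate.

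The main obstacle is the honest matching in the second observation, and this is precisely where the ``very subtle disparities'' flagged in this section must be controlled. One point is bookkeeping but not a formal triviality: one must check that, as long as no variable $z_m$ has been introduced, the constructive right inverse $S$ of Remark \ref{rem:constructive} agrees on $z$‑free monomials of order below $p$ with the reduction modulo $p$ of the characteristic‑zero right inverse $S_0$, even though the two function spaces and the chosen complements of $\Ker L_0$ are defined differently. The second, more serious, point is that the characteristic‑$p$ recursion for $\rho_i$ also meets ``wrap‑around'' resonances at the $p$‑dependent steps $\ell\equiv\rho_j-\rho_i\pmod p$ (including those with $\rho_j$ below $\rho_i$ in $\Omega$), where the obstructions are reductions of $p$‑dependent rationals tied to the denominators of the maximal‑exponent solution in $x^{\rho_r}\Q\osb x\csb$; these carry no information about any fixed $c_{ij}$ and would derail a naive ``reduce the characteristic‑zero algorithm modulo $p$'' argument, so one must scrupulously confine the comparison to the finitely many genuine resonant steps $\ell=\rho_j-\rho_i<p$, on which everything does reduce well. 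The clean insight that makes all this go through -- and that replaces any detailed tracking of the two algorithms past the first resonance -- is the $p$‑th power observation of the third step: a full power series basis of $\LL y=0$ forces $y_{\rho_i,0}$ to be ``$p$‑divisible'' in the $z_i$, which kills exactly the coefficients that would witness a logarithm downstairs.
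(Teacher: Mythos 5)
Your proposal is correct and follows essentially the same route as the paper: parts (a) and (b) go through Honda, Cartier and Kronecker--Frobenius (your derivation of squarefreeness from $\chi_{\ol L}\mid s^p-s$ is a harmless variant of the paper's exclusion of double roots via $z_1$-dependent solutions), and for (c) you, like the paper, run the characteristic-zero and characteristic-$p$ normal-form recursions in parallel, match them modulo $p$ for almost all $p$ up to the relevant resonant step, and use the fact that the constants $\mathcal{C}$ involve only $p$-th powers of the $z_i$ to force the $z_1$-coefficient --- hence the reduction of the characteristic-zero obstruction --- to vanish. The paper packages this as a single contradiction at the first index where $z$ appears, via the auxiliary solution $g=\uu^{-1}\bigl(x^\rho\bigl(1+\sum_{\ell\in\Lambda}\overline{a_\ell}x^\ell\bigr)\bigr)$, whereas you organize it as an induction on the resonance gap; the substance, including your choice of $p$ large to avoid the wrap-around resonances, is the same.
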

\begin{proof} For part (a) use \cite{Honda81}, Corollary p.~178, combined with Theorem.~\ref{thm:sufficiently} and Lemma~\ref{lem:Car} from above.

(b) See \cite{Honda81}, Thm.~2, p.~179, combined with Thms.~\ref{thm:primefield} and \ref{thm:density} from above. We provide here a variant of Honda's proof. As a consequence of (a) there are $n$ local exponents of $L$, counted with multiplicity, $n=\ord\, L$. Moreover, for almost all primes $p$, the local exponents of $\LL $ have to be elements of the prime field $\F_p$. Indeed for any local exponent $\rho\not \in \F_p$ we obtain using Theorem \ref{thm:solp} a solution of the form $t^\rho f(x)\in t^\rho\F\osb x\csb$, contradicting the existence of a basis of $n$ solutions of $\LL y=0$ in $\F_p\osb x\csb$.

It is then shown as in \cite{Honda81} that the local exponents of $L$ are pairwise incongruent modulo almost all primes $p$. The indicial polynomial $\chi_L$ of $L$ has coefficients in $\Q$ and its reduction modulo $p$ splits into linear factors over $\F_p$ for almost all primes $p$. Thus, by Theorem \ref{thm:density}, $\chi_L$ splits into linear factors over $\Q$. It follows that all local exponents of $L$ are rational. Assume now that two local exponents are congruent modulo some $p$. Then their reduction modulo $p$ is a local exponent of $\LL $ of multiplicity at least $2$. So, Theorem \ref{thm:solp} together with the remarks in section \ref{subsec:specialp} upon avoiding the variable $t$, yield a solution of $\LL y=0$ of the form $u^{-1}(x^\rho z_1)$, where $u$ is the automorphism of the normal form theorem in positive characteristic, Theorem \ref{thm:nft}. This solution now depends on $z_1$, contradictory to the assumption. This proves (b).

(c) By Theorem \ref{thm:sol0} a basis of solutions of $Ly=0$ lies in 
$$
\sum_{\rho_i}x^{\rho_i}\Q\osb x\csb[z],
$$
the sum varying over all local exponents $\rho_i$ of $L$. It remains to prove that these solutions are independent of $z$. So assume the contrary: let $f$ be a solution which depends on $z$. Without loss of generality we may assume that 
$$
f=u^{-1}(x^\rho)=x^\rho(1+a_1(z)x+a_2(z)x^2+\ldots)
$$ 
for some local exponent $\rho\in\Q$ of $L$ and some $a_i\in \Q[z]$. Let $\mm\in \N$ be the first index where $a_\mm$ depends on $z$. We will construct from $f$ a solution $g$ of $\LL y=0$, for a suitable prime $p$, which involves $z_1$-terms which are not $p$-th powers. This will produce the required contradiction.

The construction of $g$ is, in fact, quite subtle. We have to run the two normal form algorithms for the construction of $f$ and $g$ simultaneously in characteristic $0$ and $p$ as long as no $z$ appears in characteristic $0$. At the moment when $z$ occurs for the first time, say, in the computation of the coefficient $a_\mm$ of $f$, a careful comparison ensures  that $z_1$ shows up also in the expansion of $g$ in the characteristic $p$ algorithm.

We choose the prime $p$ subject to the following conditions:
\begin{itemize}
\item $p> n=\ord\,L$;
\item There is a basis of solutions of $\LL y=0$ in $\F_p\osb x\csb$;
\item $p$ does not divide any of the denominators of the local exponents of $L$;
\item $p$ does not divide any of the denominators of the coefficients of $a_1,\ldots, a_k$.
\end{itemize}

Let $\Lambda$ be the set of positive integers $\ell $ smaller than $\mm $ such that $\sigma:=\rho+\ell $ is a local exponent of $\LL $. Here we write $\sigma$ and $\rho$ for elements in $\Q$ as well as for the representatives in $\{0,1,\ldots, p-1\}$ of their reduction modulo $p$. We define 
$$
g=\uu ^{-1}\left(x^\rho\left(1+\sum_{\ell \in \Lambda}\overline{a_\ell }x^\ell \right)\right)=x^\rho(1+b_1(t,z)x+b_2(t,z)x^2+\ldots),
$$
where $\uu $ is the automorphism of $\mathcal{G}^\rho=x^\rho\sum_{k=0}^\infty \bigoplus_{\alpha \in \mathcal{A}_k}\kk \z^\alpha x^k$  from the normal form theorem, Theorem \ref{thm:nft}, compare again to the remarks on avoiding the variable $t$ in Section \ref{subsec:specialp}. The additional summand $\sum_{\ell \in \Lambda}\overline{a_\ell }x^\ell$ in the inner parenthesis of $g$ is required to make $f$ and $g$ coincide up to degree $\mm-1$.

We will show that $g$ is a solution of $\LL y=0$ and that its coefficient $b_\mm$ involves $z_1$. The first thing is easy since, by the normal form theorem \ref{thm:nft}, 
$$
\LL(g)=(\LL_0\circ\uu)(g)=\LL_0\left(x^\rho\left(1+\sum_{\ell \in \Lambda}\overline{a_\ell }x^\ell\right)\right)=0,
$$
for $\rho+\ell$ is a local exponent of $\LL_0$ and hence $\LL_0(x^{\rho+\ell})=0$. This proves $\LL g=0$.

%---------------------------------

Next we prove inductively that $b_\ell=\overline{a_\ell}$ for $\ell\leq\mm-1$, i.e., that the expansion of $g$ up to degree $\mm-1$ equals the reduction of the respective expansion of $f$. This part is a bit computational.

Write $T=L-L_0$ and $\TT=\LL-\LL_0$ as earlier for the tails of $L$ and $\LL$. We expand $T$ and $\TT$ as sums of Euler operators
$$
T=T_1+\cdots+T_r,
$$
$$
\TT=\TT_1+\cdots+\TT_r,
$$
Similarly, we define $S$ and $\SS$ as the inverses $S=(L_0\vert_{\mathcal{H}})^{-1}$ and $\SS=(\LL_0\vert_{\mathcal{\ol H}})^{-1}$ of $L_0$ and $\LL_0$, respectively, on direct complements of their kernels, as described in the normal form theorems, Theorems~\ref{thm:nft0} and \ref{thm:nft}.

We now distinguish two cases.

%---------------------------------

(i) Assume first that $\rho+\ell $ is not a local exponent of $L$. Rewriting the differential equations $Ly=0$ and $\LL y=0$ as linear recursions for the coefficients of the prospective solutions we obtain, for $\ell\leq m-1$,
$$
a_{\ell }=S\left(\sum_{k=1}^r T_k\left(a_{\ell -k}x^{\rho+\ell -k}\right)\right),
$$
and
$$
b_{\ell }=\SS\left(\sum_{k=1}^r \TT_k\left(b_{\ell -k}x^{\rho+\ell -k}\right)\right),
$$
where both sums in the parentheses are homogeneous of degree $\rho+\ell $ in $x$. By induction on $\ell$ we may assume that $b_{\ell-k}=\ol{a_{\ell-k}}$ equals the reduction of $a_{\ell-k}$ for all $k=1,\ldots,r$. Hence this also holds for $b_\ell=\ol{\a_\ell}$.

%---------------------------------

(ii) Assume now that $\rho+\ell $ is a local exponent of $L$. Here, the formula for $b_\ell$ is different, by the very definition of $g$,
$$
b_{\ell }=\SS\left(\sum_{k=1}^r\TT_k(b_{\ell -k}x^{\rho+\ell -k})\right) + \overline{a_{\ell }}.
$$
Now, as $\rho+\ell$ is a local exponent of $L$ and hence also of $\LL$, the image $\SS(x^{\rho+\ell})$ will involve $z_1$. Therefore, as $b_\ell$ does not involve $z_1$ by assumption, we get $\TT_k(b_{\ell -k}x^{\rho+\ell -k})=0$. Hence again $b_\ell=\ol{a_{\ell}}$ for all $\ell\leq \mm-1$. 

This proves in both cases that $g$ is the reduction of $f$ modulo $p$ up to degree $\mm-1$. 

%---------------------------------

To finish the proof we will show that $b_\mm$ involves $z_1$. This will produce the required contradiction. As $a_\mm$ depends on $z$ by assumption, $\rho+\mm$ is necessarily a local exponent of $L$ and thus of $\LL $. Hence $\SS(x^{\rho+\mm})$ will depend on $z_1$. Recall that
$$
a_\mm=S\left(\sum_{k=1}^r T_k(a_{\mm -k}x^{\rho+\mm -k})\right),
$$
and
$$
b_\mm=\SS\left(\sum_{k=1}^r \TT_k(b_{\mm-k}x^{\rho+\mm-k})\right).
$$

From the already established equalities $b_\ell=\ol{a_\ell}$ for $\ell\leq \mm-1$ it follows that $\TT_k(b_{\mm-k}x^{\rho+\mm-k})$ is the reduction modulo $p$ of $T_k(a_{\mm -k}x^{\rho+\mm -k})$. Now, if $T_k(a_{\mm -k}x^{\rho+\mm -k})$ were $0$, its image $a_\mm$ under $S$ were zero, which is excluded by the choice of $\mm$. So this term is non-zero. But then it suffices to choose $p$ sufficiently large such that also the reduction $\TT_k(b_{\mm-k}x^{\rho+\mm-k})$ is non-zero. Similarly as before we then get that $b_\mm$ involves $z_1$, contradiction.
\end{proof}

We illustrate the crucial step in the proof of (c) by an example.

\begin{ex}
The operator $L=x^2\partial^2-3x\partial-3x-x^2-x^3$ has the solution
$$
f(x)=u^{-1}(1)=1+a_1x+a_2x^2+\ldots = 1-x+\frac{1}{2}x^2-\frac{1}{2}x^3-\frac{1}{2}x^4z+\ldots,
$$
so  $a_4=-\frac{1}{2}z$ is the first coefficient which depends on $z$. Assume that there was a full basis of solutions in $\F_3\osb x\csb$. The local exponents in characteristic $3$ are $0$ and $1$, so $\Omega_4=\{1, 3\}$. We compute, using $\ol T=x^2+x^3$ and $\ol L_0=x^2\partial^2$, the expansion of the following solution
$$\ol u_3^{-1}(1+2x+x^3)=1+2x+2x^2+x^3+\ldots,$$
which agrees with the reduction of $f$ up to order $3$. However, the next term in the expansion is $S_3(x^4)=x^4z_1$, so $u_3^{-1}(1+2x+x^3)\not \in \F_3\osb x\csb$.
\end{ex}

\subsection{Outlook}\label{subsec:outlook}

If one wants to pursue the goal of proving the equivalence of the Grothendieck $p$-curvature conjecture and the B\'ezivin conjecture, number theoretic obstacles occur.

A power series $y(x)\in \Q\osb x\csb$ is called {\it globally bounded} if there is an integer $N$
 such that $y(Nx)\in \Z\osb x\csb$. In other words, there are only finitely many prime numbers $p$ appearing in the denominators of the coefficients of $y$ and they only grow geometrically. A theorem of Eisenstein \cite{Eisenstein52} says that any algebraic power series is globally bounded. 

To prove that the validity of the B\'ezivin conjecture implies the validity of the Grothendieck $p$-curvature conjecture it suffices to show that for a linear differential equation $Ly=0$ whose reduction $L_py=0$ has a full basis of solutions in $\F_p\osb x\csb$ the basis of solutions in characteristic $0$ is globally bounded. For this it is natural to try to compare the algorithms from the normal form theorems in characteristic $0$ and $p$ further. Ideally, $p$ would not appear in the denominators of solutions in characteristic $p$ if and only if there is a basis of solutions in $\F_p\osb x\csb$ of $L_py=0$, at least for almost all $p$. However, the situation is not as easy as one might hope, as the following two examples illustrate:

\begin{ex} \label{ex:GB}
(i) The first example shows that for finitely many primes it may happen that a full basis of solutions of the reduction of a linear differential equation modulo $p$ exists, although $p$ appears in the denominator of one of the solutions in characteristic $0$. The solution of $\partial - nx^{n-1}$ for $n\in \N$ is $e^{x^n}$, a power series where each prime number appears eventually in the denominators. However, for all prime numbers $p$ dividing $n$, the reduction of the equation modulo $p$ is an Euler equation having the solution $1\in \F_p\osb x\csb$. As this can happen only for a finite number of primes, this does not contradict the Grothendieck $p$-curvature conjecture.

(ii) The next example shows that to rule out the appearance of the prime factor $p$ in the denominators of a solution of $Ly=0$ it is not sufficient to work on the level of individual solutions associated to a local exponent and its reduction. If possible at all, it has to take into account the existence of a full basis of solutions.

The power series $$y(x)=\sum_{k=1}^\infty a_kx^k=\sum_{k=1}^\infty \frac{k(k+2)}{(k+1)}x^k=\frac{3}{2}x+\frac{8}{3}x^2+\frac{15}{4}x^3+\frac{24}{5}x^4+\ldots=\frac{\log(1-x)}{x}+\frac{x}{(x-1)^2}$$
is annihilated by the third order operator
$$ L=x^3\partial^3+4x^2\partial^2+x\partial-1-(x^4\partial^3+8x^3\partial^2+13x^2\partial+3x).$$ 
This operator $L$ is {\it hypergeometric}, i.e., $T=L-L_0$ is an Euler operator with shift one. Moreover, $y$ is annihilated by the second order operator
$$
M=3x^2\partial^2+3x\partial-3+(x^4-4x^3)\partial^2+(3x-12x^2)\partial+x^2-4x,
$$
which is not hypergeometric. The operator $M$ is a right divisor of $L$, as one verifies that
$$
\left(-\frac{1}{x-3}x\partial-\frac{1}{x-3}\right)M=L.
$$

Let us first concern ourselves with the operator $L$. Its local exponents are $-1$ with multiplicity two and $1$ with multiplicity $1$.  We have $y=\frac{3}{2}\cdot u^{-1}(x)$, where $u$ is the automorphism described in the normal form theorem in characteristic $0$. Moreover we compute $u^{-1}(x^{-1})=x^{-1}$ and $u^{-1}(x^{-1}z)=x^{-1}z$. Thus a basis of solutions of $Ly=0$ is given by  $y, x^{-1}$ and $x^{-1}\log x $.

For all prime numbers $p$ the coefficient of $x^{p-2}$ in the expansion of $y$ is divisible by $p$, while the denominators of $a_1,\ldots, a_{p-2}$ are not. Thus 
$$
y_p:=\sum_{k=1}^{p-2}a_kx^k
$$
is well defined in characteristic $p$ and a solution to the equation $L_py=0$. It is given as $u^{-1}_p(x)$ where $u_p$ is the automorphism defined in the normal form theorem in characteristic $p$. The series $y$ is not algebraic, as it is not globally bounded. In fact any prime number $p$ appears in the denominators of the coefficients $a_i$. However, the solution in characteristic $p$ corresponding to the reduction of the local exponent $1$ is a genuine power series. Other linearly independent solutions in characteristic $p$ are $x^{-1}$ and $x^{-1}z_1$. We see that in neither characteristic there is a basis of power series solutions.

Let us now turn to the operator $M$, which has local exponents $-1$ and $1$ as well, both with multiplicity $1$. A basis of solutions is given by $x^{-1}$ and $y$. This does not contradict the Grothendieck $p$-curvature conjecture, as $y_p$ is not a solution of $M$. For $L$ the construction was very dependent on the fact that the equation is hypergeometric, which is no longer the case for $M$.
\end{ex}

There still remain several questions about linear differential equations over fields with positive characteristic. For linear differential equations with holomorphic coefficients there is a criterion by Fuchs characterizing regular singular points of an operator $L$ \cite{Fuchs66}. A point $a\in \mathbb{P}^1_\C$  is at most a regular singularity of $L$ if and only if there is a local basis of solutions of $Ly=0$, which grows at most polynomially when approaching $a$. One would expect a similar criterion in characteristic $p$: an $n$-dimensional vector space of solutions in $\mathcal{R}$ over the constants $\mathcal{C}$ should suffice to conclude that $0$ is a regular singular point of $L$. The needed framework could be provided by adapting the solution theory of linear differential equations with holomorphic coefficients and an irregular singularitiy at $0$ of N. Merkl, described in section \ref{subsec:app0} to positive characteristic. More precisely, this should allow the definition of a ring $\widetilde{\mathcal{R}}$ in which every linear differential equations with an irregular singularity at $0$ in positive characteristic has a basis of solutions. The corresponding criterion in characteristic $p$ should then read: A linear differential equations $Ly=0$ admits a basis of solutions in $\mathcal{R} \subseteq \widetilde{\mathcal{R}}$ if and only if $0$ is a regular singularity of $L$

Moreover, the solutions of differential equations in $\mathcal{R}$ need to be better understood. For example one would expect some kind of pattern in the exponential function  in positive characteristic discussed in Example \ref{ex:ex}. However, no such structure seems obvious. Also the question of the algebraicity of the constant term raised in Example \ref{ex:ex} and Problem \ref{prob:alg} deserves some attention and should be studied for the constant terms of solutions of any equation.

In addition there is hope to extract information about the $p$-curvature of linear differential operator $L$ in positive characteristics from the description of a full basis of solutions in the differential extension $\mathcal{R}$ of $\kk$. In \cite{BCS15} Bostan, Caruso and Schost describe an algorithm on how to effectively compute the $p$-curvature of a differential operator. They work over the ring $\kk\osb x\csb^{\rm dp}$ of series of the form
$$
f=a_0+a_1\gamma_1(x)+a_2\gamma_2(x)+\ldots.
$$
The elements $\gamma_i(x)$ are formal variables, but should be thought of $\frac{x^i}{i!}$. The multiplication on $\kk\osb x\csb^{\rm dp}$ is consequently given by $\gamma_{i}(x)\gamma_j(x)=\binom{i+j}{i}\gamma_{i+j}(x)$. In a suitable extension extension of $\kk\osb x\csb^{\rm dp}$, accounting for local exponents outside the prime field, they construct a basis of solutions of $Ly=0$. From this basis they compute, passing to systems of first order equations, the matrix representation of the $p$-curvature. A similar program seems feasible working in $\mathcal{R}$ instead of $\kk\osb x\csb^{\rm dp}$.

Finally, there remain, of course, the Grothendieck $p$-curvature conjecture and the B\'ezivin conjecture. As Example \ref{ex:GB} shows, the algorithms of the normal form theorems in characteristic $p$ and $0$ show some unexpected discrepancy. The hope that solutions of the reduction of differential operators are reductions of solutions of the operator seems to be unfounded. However, the phenomena shown require further investigation.

\addcontentsline{toc}{section}{References}
\printbibliography
\textsc{University of Vienna, Faculty of Mathematics, Oskar-Morgenstern-Platz 1, 1090, Vienna, Austria}\\
\textit{Email: }\href{mailto:florian.fuernsinn@univie.ac.at}{\texttt{florian.fuernsinn@univie.ac.at}}\\
\textit{Email: }\href{mailto:herwig.hauser@univie.ac.at}{\texttt{herwig.hauser@univie.ac.at}}

\end{document}